\documentclass[a4paper,11pt]{amsart}
\RequirePackage{etex}

\usepackage{amssymb}
\usepackage{amsmath}
\usepackage{amsthm}
\usepackage{amscd}
\usepackage{verbatim}
\usepackage{graphicx}
\usepackage[all]{xy}
\usepackage{url}

\usepackage{mathtools}
\usepackage{dsfont}

\usepackage{comment}
\usepackage{color}
\usepackage{url}

\usepackage[T2A,T1]{fontenc}
\usepackage[utf8]{inputenc}
\usepackage[russian,english]{babel}

\usepackage{dynkin-diagrams}
\tikzset{big arrow/.style={
    -Stealth,line cap=round,line width=1mm,
    shorten <=1mm,shorten >=1mm}}

\voffset=-12mm
\vsize=237mm 
\textheight=237mm 
\hsize=138mm 
\textwidth=138mm 
\hoffset=-5mm 
\parskip=4pt
\parindent=12pt

\DeclareFontEncoding{OT2}{}{} 
\DeclareTextFontCommand{\textcyr}{\fontencoding{OT2}\fontfamily{wncyr}\fontseries{m}\fontshape{n}\selectfont}

\definecolor{brown}{RGB}{150,100,0}

\definecolor{purple}{RGB}{128,0,128}

\definecolor{grey}{RGB}{128,128,128}

\definecolor{gray}{RGB}{32,32,32}

\addtolength{\textwidth}{10ex}
\addtolength{\oddsidemargin}{-5ex}
\addtolength{\evensidemargin}{-5ex}

\newcommand{\SmallMatrix}[1]{{\tiny\arraycolsep=0.4\arraycolsep\ensuremath
{\begin{pmatrix}#1\end{pmatrix}}}}

\newcommand{\0}[2]{{}_{#2}#1}
\newcommand{\ii}{{\boldsymbol{i}}}

\DeclareMathOperator{\Aut}{Aut}
\newcommand{\ad}{{\rm ad}}

\setcounter{tocdepth}{1}

\numberwithin{equation}{section}

\newcommand{\subsec}[1]{\setcounter{subsection}{\value{equation}}\subsection{#1}\setcounter{equation}{\value{subsection}}}
\swapnumbers

\theoremstyle{plain}

\newtheorem{theorem}[equation]{Theorem}
\newtheorem{proposition}[equation]{Proposition}

\newtheorem{lemma}[equation]{Lemma}
\newtheorem{corollary}[equation]{Corollary}

\theoremstyle{definition}

\newtheorem{remark}[equation]{Remark}
\newtheorem{example}[equation]{Example}
\newtheorem{examples}[equation]{Examples}
\newtheorem{definition}[equation]{Definition}
\newtheorem{notation}[equation]{Notation}
\newtheorem{construction}[equation]{Construction}

\title[Real Galois cohomology and real component group]
{Galois cohomology and component group\\ of a real reductive  group}

\author{Mikhail Borovoi and Dmitry A. Timashev}

\address{Raymond and Beverly Sackler School of Mathematical Sciences,
Tel Aviv University, 6997801 Tel Aviv, Israel}
\email{borovoi@tauex.tau.ac.il}

\address{
Lomonosov Moscow State University, Faculty of Mechanics and
Mathematics, Department of Higher Algebra, 119991 Moscow, Russia}
\email{timashev@mccme.ru}

\thanks{Borovoi was partially supported
by the Israel Science Foundation (grant 870/16).
Timashev was supported by the Russian Foundation
for Basic Research (grant 20-01-00091) and by the Ministry of Education and Science of the Russian Federation
in the framework of the program of the Moscow Center for Fundamental and Applied Mathematics (agreement 075-15-2019-1621).}

\keywords{Real reductive group, real Galois cohomology, real component group}

\subjclass{Primary: %
  11E72
, 20G10
, 20G20
}

\dedicatory{To the memory of Arkadi\u\i\ L\!'\hs vovich Onishchik}

\newcommand{\into}{\hookrightarrow}
\newcommand{\onto}{\twoheadrightarrow}
\newcommand{\lra}{\longrightarrow}

\newcommand{\labelt}[1]{\xrightarrow{\makebox[1.2em]{\scriptsize ${#1}$}}}
\newcommand{\labelto}[1]{\xrightarrow{\makebox[1.5em]{\scriptsize ${#1}$}}}
\newcommand{\labeltoo}[1]{\xrightarrow{\makebox[2em]{\scriptsize ${#1}$}}}
\newcommand{\labeltooo}[1]{\xrightarrow{\makebox[2.7em]{\scriptsize ${#1}$}}}

\newcommand{\longisoto}{{\ \labelt{\raisebox{-1.ex}{$\sim$}}\ }}
\newcommand{\isoto}{\longisoto}

\newcommand{\hs}{\kern 0.8pt}
\newcommand{\hssh}{\kern 1.2pt}

\newcommand{\hm}{\kern -0.8pt}

\newcommand{\emm}{\bfseries}

\newcommand{\Lie}{{\rm Lie}}

\newcommand{\Ad}{\mathds{A}}
\newcommand{\Cd}{\mathds{C}}
\newcommand{\Hd}{\mathds{H}}
\newcommand{\Qd}{\mathds{Q}}

\newcommand{\Zd}{\mathds{Z}}
\newcommand{\Rd}{\mathds{R}}

\newcommand{\Gd}{\mathds{G}}

\newcommand{\Z}{{\Zd}}
\newcommand{\Q}{{\Qd}}
\newcommand{\R}{{\Rd}}
\def\C{{\Cd}}
\newcommand{\G}{{\Gd}}
\renewcommand{\H}{{\Hd}}

\newcommand{\id}{{\rm id}}

\renewcommand{\AA}{{\bf A}}
\newcommand{\BB}{{\bf B}}
\newcommand{\CC}{{\bf C}}

\newcommand{\GG}{{\bf{G}}}
\newcommand{\HH}{{\bf H}}

\renewcommand{\SS}{{\bf S}}
\newcommand{\TT}{{\bf T}}
\newcommand{\UU}{{\bf U}}

\newcommand{\ZZ}{{\bf Z}}

\newcommand{\X}{{{\sf X}}}

\def\Hom{{\rm Hom}}

\def\Aut{{\rm Aut}}
\def\Gal{{\rm Gal}}
\def\Lie{{\rm Lie\,}}
\def\coker{{\rm coker\,}}
\def\im{{\rm im\,}}

\def\Ad{{\rm Ad}}
\newcommand{\inn}{{\rm inn}}

\def\pp{{p}}
\def\qq{{q}}


\newcommand{\GL}{{\rm{GL}}}

\newcommand{\Spin}{{{\rm Spin}}}
\def\Dtil{{\widetilde{D}}}

\newcommand{\GGtil}{\widetilde{\GG}}

\def\vev{\varepsilon^\vee}

\def\half{{\tfrac{1}{2}}}
\def\ihalf{{\tfrac{\ii}{2}}}

\newcommand{\uu}{{\rm u}}
\newcommand{\red}{{\rm red}}

\newcommand{\gf}[2]{\genfrac{}{}{0pt}{}{#1}{#2}}

\newcommand{\tauhat}{{\hat\tau}}

\newcommand{\DDD}{{\sf D}}

\newcommand{\Orb}[1]{{\rm Orb}({#1})}
\newcommand{\Orbb}[1]{{\rm Orb}\big({#1}\big)}

\newcommand{\Wtil}{{\widetilde{W}_0}}
\newcommand{\BRD}{{\rm BRD}}

\newcommand{\Rm}{{\mathcal R}}
\def\Sm{{\mathcal S}}
\newcommand{\What}{{\widehat{W}_0}}

\newcommand{\upgam}{{\hs^\gamma\hm}}

\newcommand{\sss}{{\rm ss}}
\newcommand{\ssc}{{\rm sc}}

\newcommand{\Ga}{\Gamma}

\newcommand{\Km}{{\mathcal K}}

\def\cc{\raise 1.7pt \hbox{\Tiny{$\bullet$}}}

\newcommand{\Gtil}{{\widetilde G}}

\newcommand\gtil{{\tilde g}}
\newcommand{\der}{{\rm ss}}

\newcommand{\Ho}{{\mathrm{H}\kern 0.4pt}}
\newcommand{\Zl}{{\mathrm{Z}\kern 0.2pt}}
\newcommand{\Bd}{{\mathrm{B}\kern 0.2pt}}
\newcommand{\HoT}{\widehat{\mathrm{H}}}

\newcommand\piR{{\pi_0^\R\hshs}}

\newcommand{\vphi}{\varphi}

\newcommand{\vt}{{\vartheta}}
\newcommand{\ve}{\varepsilon}
\newcommand{\vs}{{\varsigma}}

\newcommand{\Exp}{{\mathcal{E}}}
\newcommand{\Exphat}{\widehat{\mathcal{E}}}

\newcommand{\wt}{\widetilde}
\newcommand{\ov}{\overline}

\newcommand{\tl}{\mathfrak{t}}
\newcommand{\ttl}{\boldsymbol{\tl}}
\newcommand{\sll}{\mathfrak {s}}

\newcommand{\hl}{\mathfrak{h}}
\newcommand{\gl}{\mathfrak{g}}

\newcommand{\ggl}{\boldsymbol{\gl}}
\newcommand{\ssl}{\boldsymbol{\mathfrak{s}}}
\newcommand{\ul}{\mathfrak{u}}
\newcommand{\uul}{\boldsymbol{\ul}}

\newcommand{\Nm}{{\mathcal N}}

\newcommand{\GmC}{{\G_{{\rm m},\C}}}
\newcommand{\GmR}{{\G_{{\rm m},\R}}}

\def\piR{\pi_0^\R\hs}
\def\Hon{\Ho^1\hs}
\def\pia{{\pi_1^{\rm alg}}}

\def\prm{\prime\,}
\def\dprm{\prime\prime\,}

\newcommand{\scong}{{\scriptstyle\cong}}
\newcommand{\mm}{{m}}

\newcommand{\ab}{{\rm ab}}

\renewcommand{\top}{{\rm top}}
\newcommand{\pit}{{\pi_1^\top}}

\newcommand{\Bm}{{\mathcal B}}
\newcommand{\RG}{{\bf G}}

\newcommand{\parpr}{{\partial\hs'}}

\newcommand{\tw}{{\mathcal T}}

\newcommand\rtil{{\tilde\rho}}

\newcommand{\RW}{{\rm R}}
\newcommand{\ev}{{\rm ev}}

\newcommand{\ltil}{{\tilde l}}
\newcommand{\Ztil}{{\wt Z}}

\begin{document}

\date{\today}

\begin{abstract}
Let $\GG$ be a connected reductive group over the field of real numbers $\R$.
Using results of our previous joint paper,
we compute combinatorially the first Galois cohomology set $\Ho^1(\R,\GG)$
in terms of reductive Kac labelings.
Moreover, we compute the group of connected components $\pi_0\GG(\R)$ of the real Lie group $\GG(\R)$
and the maps in exact sequences containing $\pi_0\GG(\R)$ and $\Ho^1\hm(\R,\GG)$.
\end{abstract}

\maketitle

\tableofcontents

\section*{Introduction}
\label{s:Intro}

In this article,
by an $\R$-group we mean an algebraic group,
not necessarily linear or connected, over the field of real numbers $\R$.
We denote  $\Gamma=\Gal(\C/\R)=\{1,\gamma\}$,
the Galois group of $\C$ over $\R$,
where $\gamma$ is the complex conjugation.

\subsec{}
For the definition of the  first (nonabelian)
Galois cohomology set $\Hon\GG\coloneqq{}\Ho^1(\R,\GG)$
of a real algebraic group $\GG$
see Serre's book \cite[Section I.5]{Serre};
see also Section \ref{s:Nonab-R} below.
Galois cohomology can be used to answer many natural questions;
see Serre \cite[Section III.1]{Serre}.

The Galois cohomology set $\Hon \GG$ is finite; see Example~(a) in Section III.4.2
and Theorem~4 in Section III.4.3 of Serre's book \cite{Serre}.
Moreover, when $\GG$ is nonabelian, the set $\Hon \GG$ has a canonical neutral element, but no natural group structure,
and one is tempted to conclude that to compute $\Hon\GG$ is the same as to compute the cardinality $\#(\Hon \GG)$.
However, for applications in classification problems over $\R$,
 one needs {\em explicit cocycles} representing the cohomology classes;
see, for instance, \cite[Section 8]{Djokovic} or \cite[Section 3.3]{BGL}.

The first Galois cohomology sets of the classical groups $\GG$ are well known.
The  cardinalities of the Galois cohomology sets $\Hon\GG$ were computed by Adams and Ta{\"\i}bi \cite{AT}
for ``most'' of the  absolutely  simple $\R$-groups $\GG$,
in particular, for all  simply connected absolutely simple $\R$-groups
and all adjoint absolutely simple $\R$-groups.
In \cite{BE}, explicit cocycles were computed for all {\em simply connected, absolutely simple} $\R$-groups.
Thus  $\Ho^1\hs \GG$ is known for all  {\em simply connected semisimple} $\R$-groups $\GG$;
see \cite[Introduction]{BT} for details.
On the other hand, a slight modification of the method of Kac \cite{Kac} in the version of
 Onishchik and Vinberg \cite[Section 4.4]{OV} and
Gorbatsevich, Onishchik, and Vinberg \cite[Section 3.3]{GOV}
gives $\Ho^1\hs\GG$ (explicit cocycles) for all {\em absolutely simple, adjoint} $\R$-groups $\GG$.
Thus one obtains   $\Ho^1\hs\GG$  for all {\em adjoint semisimple} $\R$-groups $\GG$;
see \cite[Introduction]{BT} for details.

In \cite[Theorem 6.8]{Borel-Serre} (see also \cite[Section III.4.5, Theorem 6 and Example (a)]{Serre}),
Borel and Serre computed  $\Hon\GG$ for a {\em compact} group $\GG$.
For a compact connected $\R$-group $\GG$  (which is automatically reductive),
they constructed a canonical bijection $\TT(\R)^{(2)}/W\isoto \Hon\GG$,
where $\TT\subseteq\GG$ is a maximal torus,
$\TT(\R)^{(2)}$ is its subgroup of real points of order dividing~2, and $W=W(\GG,\TT)$ is the Weyl group.
Generalizing the result of Borel and Serre, the first-named author \cite{Borovoi88} computed
$\Hon\GG$ for a connected reductive $\R$-group, not necessarily compact
(see also \cite[Theorem 9]{Borovoi-arXiv}).
Note  that, similarly to the formula of Borel and Serre, the result of \cite{Borovoi88}
describes $\Hon\GG$ as the set of orbits of a certain Coxeter group $W_0$ of large order
in a set of large cardinality.
For example, if the adjoint group $\GG^\ad\coloneqq \GG/ Z(\GG)$
is an inner form of a classical simple compact group of absolute rank $\ell$,
then $W_0=W$ has order at least $\ell!$\hs.
However, the cardinality of $\Hon\GG$ is usually much smaller.
Therefore, it is a challenging problem to find a transparent efficiently computable description
of Galois cohomology of connected reductive $\R$-groups
that allows one to write down easily representatives of cohomology classes.

In \cite{BT}, using ideas  of Kac \cite{Kac0}, \cite{Kac},
ideas and results of Gorbatsevich, Onishchik, and Vinberg \cite{OV}, \cite{GOV},
and  the result of \cite{Borovoi88},
we computed the Galois cohomology set $\Hon\GG$ for all {\em semisimple} $\R$-groups $\GG$,
not necessarily simply connected or adjoint, via {\em Kac labelings}.
In the present article,  using results of \cite{BT}, we compute $\Hon\GG$
for a connected {\em reductive} $\R$-group $\GG$ via {\em reductive Kac labelings}.
The formulas for $\Hon\GG$ involve some combinatorial constructions and notation;
we refer to Section \ref{s:H1} for the statement and proof of our result.
Note that we obtain $\Hon\GG$ as the set of orbits of a certain finite abelian group $F_0$ of small order
acting on a relatively small set of reductive Kac labelings explicitly described in combinatorial terms.
If $\ZZ^\ssc$ is the center of the universal cover $\GG^\ssc$ of the commutator subgroup $[\GG,\GG]$ of $\GG$,
then the order of $F_0$ is a divisor of the order of the group $\ZZ^\ssc(\C)$; see Remark~\ref{r:F0|C}.
Our description of $\Hon\GG$ allows one to enumerate the cohomology classes
and to write down explicit cocycles representing them.
Note that computation of Galois cohomology of {\em any  connected} linear algebraic $\R$-group
reduces to the reductive case (see Subsection~\ref{ss:structure} below).

The set $\Hon\GG$ has certain additional structures: it is a functor of $\GG$,
there is a twisting map (see Serre \cite[Section I.5.3]{Serre}),
and there is an action of the abelian group $\Hon\hm Z(\GG)$
on $\Hon\GG$ (see Serre, \cite[Section I.5.7]{Serre}).
We discuss these additional structures in Section~\ref{s:Additional}.
We also compute the abelian cohomology group $\Ho^1_\ab\hs\GG$
and the abelianization map
\[\ab^1\colon \Hon\GG\to\Ho^1_\ab\hs\GG\]
introduced in \cite{Borovoi-Memoir}; see Proposition \ref{p:H1ab} and Theorem \ref{t:H1ab}.
One needs the abelian group $\Ho^1_\ab\hs\GG$ and the abelianization map in order to describe
the Galois cohomology of a reductive group over a {\em  number field};
see \cite[Theorem 5.11]{Borovoi-Memoir}.

\subsec{}
For a connected reductive $\R$-group $\GG$, consider the group of connected components
$\piR\GG\coloneqq \pi_0\hs\GG(\R)$
of the real Lie group $\GG(\R)$.
In the case of an absolutely simple $\R$-group $\GG$ of adjoint type, the group
$\piR\GG$ was tabulated in the papers \cite{Matsumoto},
\cite{Thang}, and \cite{AT}.
For a general connected reductive group $\GG$, the only known to us result
is that of Matsumoto \cite[Corollary of Theorem 1.2]{Matsumoto},
see also Borel and Tits \cite[Theorem 14.4]{Borel-Tits},
saying that if $\TT_s$ is a {\em maximal split} torus of $\GG$,
then the canonical homomorphism $\piR\TT_s\to\piR\GG$ is surjective.
From this result it follows that $\piR\GG\simeq(\Z/2\Z)^d$,
where $d\le{\rm rank}_\R(\GG)\coloneqq \dim\TT_s$.
In particular, the group $\piR\GG$ is abelian.

In this article, we compute $\piR\GG$ for a connected reductive $\R$-group $\GG$
in terms of the {\em algebraic fundamental group} $\pia\GG$
introduced in \cite{Borovoi-Memoir}.

Let $\GG^\ssc$ denote the universal cover of the commutator subgroup $[\GG,\GG]$ of $\GG$.
Let $\rho\colon \GG^\ssc\onto[\GG,\GG]\into \GG$ denote the natural homomorphism.
Let $\TT\subseteq\GG$ be a maximal torus.
We set $\TT^\ssc=\rho^{-1}(\TT)\subseteq\GG^\ssc$, which is a maximal torus in $\GG^\ssc$.

\begin{definition}[\hs{\cite{Borovoi-Memoir}}\hs]
The {\em algebraic fundamental group of $\GG$} is
\[\pia\GG=\coker \big[\rho_*\colon \X_*(\TT^\ssc)\to\X_*(\TT)\big],\]
where $\X_*(\TT)\coloneqq\Hom(\GmC,T)$
denotes the cocharacter group of the complex torus $T\coloneqq\TT\times_\R\C$,
and similarly for $\X_*(\TT^\ssc)$.
\end{definition}
As an abstract group, $\pia\GG$ is isomorphic (non-canonically)
to the topological fundamental group of the complex Lie group $\GG(\C)$.
The Galois group $\Gamma$ naturally acts on $\pia\GG$,
and the $\Gamma$-module $\pia\GG$ is well defined
(does not depend on the choice of $\TT$);
see \cite[Lemma~1.2]{Borovoi-Memoir}.

\begin{construction}
We wish to compute $\piR\GG$.
Let $\GG^\ad\coloneqq\GG/Z(\GG)$ denote the corresponding semisimple group of adjoint type.
Set $\TT^\ad=\TT/Z(\GG)\subset\GG^\ad$.
Write $C=\pia\GG^\ad=\X_*(\TT^\ad)/\X_*(\TT^\ssc)$.
The homomorphism $\Ad\colon \GG\to\GG^\ad$ induces a homomorphism
\[\Ad_*\colon\Ho^0\hs\pia\GG\lra \Ho^0\hs\pia\GG^\ad=\Ho^0\hs C,\]
where $\Ho^0\hs\pia\GG\coloneqq\HoT^0(\Gamma,\pia\GG)$ (zeroth Tate cohomology),
and similarly for $\Ho^0\hs C$.
We note that there is a $\Gamma$-anti-equivariant isomorphism of $\Gamma$-groups $C\isoto\ZZ^\ssc$
that induces a canonical isomorphism $\Ho^0\hs C\isoto \Ho^1\hs \ZZ^\ssc$,
where $\ZZ^\ssc=Z(\GG^\ssc)$, the center of $\GG^\ssc$;
see Definition \ref{d:anti} and Lemma \ref{l:anti} below.
The inclusion homomorphism $\iota\colon \ZZ^\ssc\into\GG^\ssc$ induces a map
$\iota_*\colon \Ho^1\hs\ZZ^\ssc\to\Ho^1\hs\GG^\ssc$.
Consider the composite map
\[\phi\colon\ \Ho^0\hs\pia\GG\labeltoo{\Ad_*}\Ho^0\hs C\isoto
      \Ho^1\hs \ZZ^\ssc \labelto{\iota_*}  \Ho^1\hs \GG^\ssc.\]
We write  \,$(\Ho^0\hs\pia\GG)_1\coloneqq \ker\phi\subseteq \Ho^0\hs\pia\GG$,
the preimage of the neutral element $[1]\in \Ho^1\hs \GG^\ssc$.
\end{construction}

\begin{theorem}\label{t:pi0R-0}\
\begin{enumerate}
\item[\rm(i)] The subset $(\Ho^0\hs\pia\hs\GG)_1\subseteq\Ho^0\hs\pia\GG$ is a subgroup.
\item[\rm (ii)] There is a canonical group isomorphism
    $\psi\colon (\Ho^0\hs\pia\hs\GG)_1\isoto \piR\GG$.
\end{enumerate}
\end{theorem}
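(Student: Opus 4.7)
The overall strategy is to prove (i) as a formal consequence of the central extension $1\to\ZZ^\ssc\to\GG^\ssc\to\GG^\ad\to 1$, and to construct $\psi^{-1}$ explicitly via the topological universal cover of $\GG(\C)$, reducing the verification that it is an isomorphism onto $(\Ho^0\pia\GG)_1$ to naturality in $\Ad\colon\GG\to\GG^\ad$ and the classical adjoint case.

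\emph{Part (i).} The key point is that $\ker\iota_*\subseteq\Ho^1\ZZ^\ssc$ is already a subgroup. The long exact Galois cohomology sequence of the central extension above gives a connecting map $\delta\colon\GG^\ad(\R)\to\Ho^1\ZZ^\ssc$ which is a \emph{group} homomorphism (because $\ZZ^\ssc$ is central and abelian) and whose image equals $\ker\iota_*$. Since $\Ad_*$ and the anti-equivariant isomorphism $\Ho^0 C\isoto\Ho^1\ZZ^\ssc$ are group homomorphisms, the subset $(\Ho^0\pia\GG)_1=\phi^{-1}([1])$ is a subgroup of $\Ho^0\pia\GG$.

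\emph{Part (ii).} I would construct the inverse $\psi^{-1}\colon\piR\GG\to\Ho^0\pia\GG$ via the topological universal cover $\GGtil\to\GG(\C)$ of the connected complex Lie group $\GG(\C)$, whose kernel is $\pit\GG(\C)$. For $g\in\GG(\R)$ pick any lift $\wt g\in\GGtil$ and set $c(g)=\wt g^{-1}\hs\gamma(\wt g)\in\pit\GG(\C)$. A standard check shows $\gamma\cdot c(g)=-c(g)$, that $c(g)$ is well defined modulo $(1-\gamma)\pit\GG(\C)$, and that $g\mapsto[c(g)]$ is a group homomorphism trivial on $\GG(\R)^\circ$ (by continuity and discreteness of $\pit$). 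Under the canonical identification $\pit\GG(\C)\cong\pia\GG$---which carries a sign twist in the Galois action from the factor $2\pi i$ in $\ker\exp$---the class lies in $\HoT^0(\Gamma,\pia\GG)=\Ho^0\pia\GG$, yielding $\psi^{-1}$. That $\im(\psi^{-1})\subseteq(\Ho^0\pia\GG)_1$ follows by naturality in $\Ad$, reducing to the adjoint case where the same construction recovers $\delta$. Injectivity reduces to the statement that the image of $\GGtil^\Gamma$ in $\GG(\R)$ equals $\GG(\R)^\circ$, a topological counterpart of the Kneser--Matsumoto theorem on connectedness of simply connected real groups. For surjectivity, given $[c]\in(\Ho^0\pia\GG)_1$, one uses $\ker\iota_*=\im\delta$ to find $\overline g\in\GG^\ad(\R)$ with $\delta(\overline g)$ matching the image of $[c]$ in $\Ho^1\ZZ^\ssc$, and then lifts $\overline g$ along $\GG(\R)\to\GG^\ad(\R)$.

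\emph{Main obstacle.} I expect the hardest step to be surjectivity: producing the real lift $g\in\GG(\R)$ of $\overline g\in\GG^\ad(\R)$ requires controlling the obstruction in $\Ho^1 Z(\GG)$, and one must show that the hypothesis $\phi([c])=[1]$, together with the factorization of $\ZZ^\ssc\to Z(\GG)$ through the isogeny $\rho\colon\GG^\ssc\to[\GG,\GG]$, precisely kills this obstruction. Setting up the compatibility diagram between the cocycle construction on $\GGtil$, the adjoint connecting map $\delta$, and the inclusion $\ZZ^\ssc\hookrightarrow Z(\GG)$ is the technical heart of the proof.
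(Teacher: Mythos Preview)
Your construction of $\psi^{-1}$ via the topological universal cover $\GGtil\to\GG(\C)$ is exactly the paper's approach, and the identification $\pit\GG(\C)\cong(\ii)\pia\GG$ you allude to is made precise there. Where you diverge is in proving that $\psi^{-1}$ is a bijection onto $(\Ho^0\pia\GG)_1$. The paper does not reduce to the adjoint case at all. Instead it uses a concrete \emph{algebraic} model for the universal cover, namely $\GGtil=\GG^\ssc\times\ssl$ with $\ssl=\Lie Z(\GG)^0$, mapping to $\GG(\C)$ by $(g^\ssc,y)\mapsto\rho(g^\ssc)\exp 2\pi y$. The point is that $\Ho^1\GGtil=\Ho^1\GG^\ssc\times\Ho^1\ssl=\Ho^1\GG^\ssc$ (a complex vector space has trivial cohomology), so the nonabelian exact sequence for $1\to\pit\GG\to\GGtil\to\GG\to 1$ reads
\[
\GGtil(\R)\longrightarrow\GG(\R)\longrightarrow\Ho^1\pit\GG\longrightarrow\Ho^1\GG^\ssc,
\]
and since $\GGtil(\R)=\GG^\ssc(\R)\times\ssl(\R)$ is connected (Cartan/Borel--Tits), its image is $\GG(\R)^0$. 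This gives in one stroke an injection $\piR\GG\hookrightarrow\Ho^1\pit\GG\cong\Ho^0\pia\GG$ with image exactly the kernel of the map to $\Ho^1\GG^\ssc$, which one checks is $\phi$. Part (i) then falls out for free.

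Your surjectivity strategy via lifting from $\GG^\ad(\R)$ to $\GG(\R)$ is the wrong direction and the ``main obstacle'' you flag is an artifact of this detour. Beyond the $\Ho^1 Z(\GG)$ obstruction you note, there is a second gap: even if you succeed in lifting $\overline g$ to some $g\in\GG(\R)$, you would only know that $\psi^{-1}([g])$ and $[c]$ have the same image in $\Ho^0 C$ under $\Ad_*$. The kernel of $\Ad_*\colon\Ho^0\pia\GG\to\Ho^0 C$ contains the contribution of $\Ho^0\X_*(\SS)$ from the central torus $\SS=Z(\GG)^0$, and you give no mechanism for hitting that part. All of this evaporates once you use $\GG^\ssc\times\ssl$ rather than $\GG^\ssc$ alone as the simply connected cover: the unipotent factor $\ssl$ absorbs the center without introducing any cohomology.
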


Moreover, we show that the subgroup $(\Ho^0\hs\pia\hs\GG)_1$ is the stabilizer
of a Kac labeling $q\in\Km(\Dtil)$ defining the real form $\GG^\ssc$
of the complex semisimple group $G^\ssc$, under a certain action of the group $\Ho^0\hs\pia\GG$
on the affine Dynkin diagram $\Dtil$ of $\GG$; see Section \ref{s:pi0R}.
See Definition~\ref{ss:Kac-labeling} for the notion of a Kac labeling and the notation $\Km(\Dtil)$.

\subsec{}\label{ss:structure}
We describe the structure of the article.
In Sections \ref{s:Abelian}--\ref{s:uni}
we gather old and new results on group cohomology and hypercohomology
of abstract $\Gamma$-groups and on Galois cohomology of linear $\R$-groups.
In particular, for an $\R$-torus $\TT$, in Theorem \ref{p:X*}
we compute the Tate cohomology groups $\Ho^k\hs\TT$ for $k\in\Z$,
and in Corollary \ref{c:H0-X(T)} we compute the component group $\piR \TT$.
Moreover, for an $\R$-{\em quasi-torus} $\AA$
($\R$-group of multiplicative type), in Theorem \ref{t:quasi}
we compute the Tate cohomology groups $\Ho^k\hm\AA$.
Furthermore, for a short exact sequence of $\R$-groups
\begin{equation*}
 1\to \GG_1\labelto{i} \GG_2\labelto{j} \GG_3\to 1
\end{equation*}
(not necessarily connected or linear), in Section \ref{s:Nonab-R}
we construct an exact sequence
\begin{align}\label{e:G'GG''-pi-coh-Int}
\piR\GG_1\labelto{i_*} \piR\GG_2\labelto{j_*} \piR\GG_3\labelto{\delta^0}
      \Hon\GG_1\labelto{i_*} \Hon\GG_2 \labelto{j_*} \Hon\GG_3.
\end{align}
In Section \ref{s:uni} we show that if $\GG$ is a connected linear algebraic $\R$-group,
$\GG^\uu$ is its unipotent radical,
$\GG^\red\coloneqq \GG/\GG^\uu$ is the corresponding reductive $\R$-group,
and $\GG\to\GG^\red$ is the canonical homomorphism,
then the induced maps $\piR\hs\GG\to\piR\hs\GG^\red$ and
$\Ho^1\hs\GG\to\Ho^1\hs\GG^\red$ are bijective.
This reduces computing the Galois cohomology and component group
of a connected linear algebraic $\R$-group
to the case of a connected {\em reductive} $\R$-group; see Remark \ref{r:non-red}.

In Section \ref{s:H1} we state and prove Theorem \ref{t:H1}
that computes $\Hon \GG$  for a reductive $\R$-group $\GG$
in terms of {\em reductive Kac labelings}.
In Section \ref{s:pi0R} we prove Theorem \ref{t:pi0R-0}.
These are  our main results.
In Section \ref{s:Additional} we discuss additional structures on $\Hon \GG$:
functoriality, twisting, action of $\Hon\hm Z(\GG)$, and abelianization.
In Section \ref{s:connecting} we compute the connecting map $\delta^0$
in the exact sequence \eqref{e:G'GG''-pi-coh-Int} in the case
when  $\GG_2$ and $\GG_3$ are connected reductive $\R$-groups,
and $\GG_1$ is either a connected reductive $\R$-group or an $\R$-quasi-torus
(an $\R$-group of multiplicative type).

In Section \ref{s:examples} we consider examples:
we compute   $\Hon\GG$ and  $\piR\GG$ for certain connected reductive $\R$-groups $\GG$.

In Appendix \ref{s:Indecomposable} we give a short elementary proof of
the known classification of $\Gamma$-lattices
(finitely generated free abelian groups with $\Gamma$-action).

\subsec{Notation and conventions}\label{ss:not-conv}
\begin{itemize}
\item[\cc] $\Z$ denotes the ring of integers.
\item[\cc] $\Q$, $\R$, and $\C$ denote the fields of rational numbers,
           of real numbers, and of complex numbers, respectively.
\item[\cc] $\ii\in\C$ is such that $\ii^2=-1$.
           (Our results do not depend on the choice of $\ii$.)
\item[\cc] $\Gamma=\Gal(\C/\R)=\{1,\gamma\}$, the Galois group of $\C$ over $\R$,
           where $\gamma$ is the complex conjugation.
           The action of $\gamma$ on an element $s$ of a set $S$ is denoted by $s\mapsto\upgam s$.
\item[\cc] We denote real algebraic groups  by boldface letters $\GG$, $\TT$, \dots,
           their complexifications by respective
           Italic (non-bold) letters \,$G=\GG\times_\R \C$,  \,$T=\TT\times_\R\C$, \dots,
           the corresponding {\em complex} Lie algebras
           by respective lowercase Gothic letters $\gl=\Lie G$, \,$\tl=\Lie T$, \dots,
           and the corresponding {\em real} Lie algebras
           by respective boldface lowercase Gothic letters \,$\ggl(\R)=\Lie \GG$, \,$\ttl(\R)=\Lie \TT$, \dots.
\item[\cc] $\GG(\R)$ denotes the set of real points of a real algebraic group $\GG$,
           and $\GG(\C)$ denotes the set of complex points.
           By abuse of notation we identify $G$ with $\GG(\C)$. In particular,
           we write $g\in G$ for $g\in\GG(\C)$.
\item[\cc] For a homomorphism $\varphi:G\to H$ of algebraic (or Lie) groups,
           the differential at the unity $d\varphi:\gl\to\hl$ is a homomorphism of Lie algebras.
           By abuse of notation, we often write $\varphi$ instead of $d\varphi$.
\item[\cc] $G^0$ denotes the identity component of an algebraic (or Lie) group $G$.
\item[\cc] $\GG$ is an $\R$-group, not necessarily linear, connected, or reductive.
           In Section~\ref{s:uni}, $\GG$ is linear.
           Starting from Section~\ref{s:H1}, $\GG$ is connected and reductive.
           Moreover, in Section~\ref{s:H1} $\GG$ is compact.
\item[\cc] $Z(\GG)$ denotes the center of $\GG$.
\item[\cc] $\GmC$ and $\GmR$ denote the multiplicative groups over $\C$ and $\R$, respectively.
\item[\cc] $\X^*(\TT)=\Hom(T,\GmC)$, the group of complex characters
           of an algebraic $\R$-group~$\TT$.
           When $\TT$ is a torus, we regard $\X^*(\TT)$
           as a lattice in the dual space $\tl^*$ of $\tl$,
           in view of the canonical embedding $\X^*(\TT)\into\tl^*$, $\chi\mapsto d\chi$.
\item[\cc] $\X_*(\TT)=\Hom(\GmC,T)$, the group of complex cocharacters of an $\R$-torus $\TT$.
           We regard $\X_*(\TT)$ as a lattice in $\tl$,
           in view of the canonical embedding $\X_*(\TT)\into\tl$, ${\nu\mapsto d\nu(1)}$.
\item[\cc] $A\simeq B$ means that two groups (or algebraic groups) $A$ and $B$ are isomorphic.
\item[\cc] $A\cong B$ means that $A$ and $B$ are {\em canonically} isomorphic.
\item[\cc] By an {\em exact commutative diagram} we mean a commutative diagram with exact rows.
\end{itemize}

\section{Abelian cohomology}
\label{s:Abelian}

\subsec{}\label{ss:H1-abelian}
Let $A$ be a $\Ga$-module, that is, an abelian group written additively,
endowed with an action of $\Gamma=\{1,\gamma\}$.
We consider the first cohomology group $\Ho^1(\Ga,A)$.
We write $\Ho^1\hm A$ for $\Ho^1(\Ga,A)$.
Recall that
\[\Ho^1\hm A=\Zl^1\hm A/\Bd^1\hm A,\quad\
\text{where}\quad\ \Zl^1\hm A=\{a\in A\mid\upgam a=-a\}, \quad
\Bd^1\hm A=\{\upgam a'-a'\mid a'\in A\}.\]

We define the second cohomology group $\Ho^2A$ by
\[ \Ho^2\hm A=\Zl^2\hm A/ \Bd^2\hm A,\quad\
\text{where}\quad\ \Zl^2\hm A=A^\Gamma\coloneqq \{a\in A\mid \upgam a=a\},\quad\
\Bd^2\hm A=\{\upgam a'+a'\mid a'\in A\}.\]

For $k\in\Z$ we define the coboundary operator
\[d^k\colon A\to A,\quad a\mapsto\upgam a+(-1)^{k+1} a.\]
In other words, $d^k=\gamma+(-1)^{k+1}\in \Z[\Gamma]$,
where $\Z[\Gamma]=\Z\oplus\Z\gamma$  is the group ring of $\Gamma$.
We calculate:
\[d^k\circ d^{k-1}=\big(\gamma+(-1)^{k+1}\big)\big(\gamma+(-1)^{k}\big)
     =\big(\gamma-(-1)^{k}\big)\big(\gamma+(-1)^k\big)=\gamma^2-(-1)^{2k}=0.\]
We define the {\em Tate cohomology groups} $\HoT^k\hm A$ for all $k\in \Z$ by
\[ \HoT^k\hm A=\Zl^k\hm A/\Bd^k\hm A,\]
where
\begin{align*}
\Zl^k\hm A=\ker d^k=\big\{a\in A \ \big|\ \upgam a=(-1)^k a\big\},\quad\
\Bd^k\hm A=\im d^{k-1}=\big\{\upgam a'+(-1)^k  a'\ \big|\  a'\in A\big\}.
\end{align*}

Then clearly
\[\HoT^k\hm A=\Ho^1\hm A\ \ \text{when $k$ is odd,\quad and}\quad
      \HoT^k\hm A=\Ho^2\hm A\ \ \text{when $k$ is even.}\]

In this article, for any $k\in\Z$ we write $\Ho^k\hm A$ for $\HoT^k\hm A$.
In particular,
\[\Ho^0\hm A=\Zl^0\hm A/\Bd^0\hm A=A^\Gamma\hm/\big\{\upgam a'+a' \ \big|\  a'\in A\big\},\ \,\text{and not}\ A^\Gamma.\]
If $z\in\Zl^k\hm A$, we write $[z]=z+\Bd^k\hm A\in \Ho^k\hm A$ for the cohomology class of $z$.

\begin{remark}
In the standard exposition, our definitions become theorems; see
\cite[Chapter XII, Section 7, p. 251]{CE} or \cite[Theorem 5 in Section 8]{AW}.
\end{remark}

\begin{lemma}[{See, for instance, \cite[Section 6, Corollary 1 of Proposition 8]{AW}}]
\label{l:2-xi}
For any $k\in \Z$ and $\xi\in \Ho^k\hm A$, we have $2\xi=0$.
\end{lemma}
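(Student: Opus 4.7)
The plan is to exhibit, for an arbitrary cocycle $z \in \Zl^k\hm A$, an explicit witness $a' \in A$ showing that $2z$ is a coboundary. The natural candidate is $a' = z$ itself, since the cocycle condition $\upgam z = (-1)^k z$ couples nicely with the coboundary formula $d^{k-1}(a') = \upgam a' + (-1)^k a'$.

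First I would fix $\xi \in \Ho^k\hm A$ and pick a representative $z \in \Zl^k\hm A$, so by definition $\upgam z = (-1)^k z$. Then I would compute
\[
d^{k-1}(z) \;=\; \upgam z + (-1)^k z \;=\; (-1)^k z + (-1)^k z \;=\; 2\hs(-1)^k z,
\]
which by construction lies in $\Bd^k\hm A$. Hence the class of $2\hs(-1)^k z$ in $\Ho^k\hm A$ is trivial, which gives $2\hs(-1)^k\xi = 0$, and therefore $2\xi = 0$ since $(-1)^k = \pm 1$ is a unit in $\Z$.

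There is essentially no obstacle: the argument reduces to substituting the cocycle condition into the coboundary formula, and the uniform definition of $\Zl^k$ and $\Bd^k$ via the operator $d^k = \gamma + (-1)^{k+1}$ given in Subsection~\ref{ss:H1-abelian} makes the case distinction between even and odd $k$ unnecessary. The only minor point worth flagging is that one uses closure of $\Bd^k\hm A$ under the sign $(-1)^k$, which is automatic since $\Bd^k\hm A$ is a subgroup of $A$.
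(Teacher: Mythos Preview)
Your proof is correct and takes essentially the same approach as the paper: both substitute the cocycle condition $\upgam z=(-1)^k z$ into the coboundary formula $d^{k-1}$. The only cosmetic difference is that the paper applies $d^{k-1}$ to $\upgam z$ rather than to $z$, which yields $2z\in\Bd^k\hm A$ directly and avoids the extra step of clearing the sign $(-1)^k$.
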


\begin{proof}
Let $\xi=[z]$,  $z\in \Zl^k\hm A$. Then $z=(-1)^k\cdot\hm\upgam z$.
Hence
\[2z=z+(-1)^k\cdot\hm\upgam z=\big(\gamma+(-1)^k\big)
     \upgam z=d^{k-1}(\upgam z)\in\Bd^k \hm A.\]
Thus $2\xi=[2z]=0$.
\end{proof}

\begin{corollary}\label{c:2-inv}
If $A$ is a $\Gamma$-module such that the endomorphism
\[2\colon A\to A,\quad a\mapsto 2a\]
is invertible, then $\Ho^k\hm A=0$ for all $k$.
\end{corollary}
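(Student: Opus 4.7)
The plan is to derive the corollary directly from Lemma \ref{l:2-xi} by exploiting functoriality of Tate cohomology in $\Gamma$-modules. The key observation is that on $\Ho^k\hm A$ multiplication by $2$ is simultaneously the zero map and an invertible map, forcing $\Ho^k\hm A=0$.

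First I would verify that the inverse $h\colon A\to A$ of the multiplication-by-$2$ map is automatically a $\Gamma$-module homomorphism. The $\Z$-linearity of $h$ is clear. For $\Gamma$-equivariance, applying $2h=\id_A$ to $\upgam a$ and comparing with $\upgam{(2h(a))}=2\cdot\upgam{h(a)}$ yields $2h(\upgam a)=2\cdot\upgam{h(a)}$, and since multiplication by $2$ is injective on $A$ we conclude $h(\upgam a)=\upgam{h(a)}$.

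Next, being $\Gamma$-equivariant and $\Z$-linear, $h$ commutes with the coboundary operators $d^k=\gamma+(-1)^{k+1}$, so it induces an endomorphism $h_*\colon \Ho^k\hm A\to\Ho^k\hm A$ for every $k\in\Z$. The identities $2h=h\cdot 2=\id_A$ descend to $2\cdot h_*=h_*\cdot 2=\id_{\Ho^k\hm A}$, showing that the multiplication-by-$2$ endomorphism of $\Ho^k\hm A$ is invertible.

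Finally, Lemma \ref{l:2-xi} asserts that $2\xi=0$ for every $\xi\in\Ho^k\hm A$, so multiplication by $2$ is the zero endomorphism of $\Ho^k\hm A$. Combined with invertibility, this forces $\Ho^k\hm A=0$. There is no real obstacle; the only subtlety worth pointing out is that the inverse of multiplication by $2$ on $A$ is automatically $\Gamma$-equivariant, so that it genuinely acts on Tate cohomology.
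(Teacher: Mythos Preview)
Your proof is correct and follows essentially the same implicit argument the paper intends: the corollary is stated without proof immediately after Lemma~\ref{l:2-xi}, and your reasoning---that multiplication by~$2$ is simultaneously zero (by the lemma) and invertible (by functoriality) on $\Ho^k\hm A$---is precisely the one-line deduction the paper leaves to the reader. Your explicit verification that the inverse of multiplication by~$2$ is $\Gamma$-equivariant is a useful bit of care that the paper takes for granted.
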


\subsec{}\label{ss:ABC-abelian}
Let
\[0\to A\labelto{i} B\labelto{j} C\to 0\]
be a short exact sequence of $\Ga$-modules.
It gives rise to a cohomology exact sequence
\begin{align}\label{e:Ga-ABC}
\cdots\longrightarrow\Ho^{k-1} C\labeltoo{\delta^{k-1}}\Ho^k\hm A\labelto{i_*^k} \Ho^k B
      \labelto{j_*^k} \Ho^k C\labelto{\delta^k}\Ho^{k+1}\hm A\longrightarrow\cdots
\end{align}
We recall the formula for $\delta^k$. We identify $A$ with $i(A)\subseteq B$.
Let $[c]\in \Ho^k C$, $c\in \Zl^k C$.
We lift $c$ to some $b\in B$ and set $a=d^k\hs b$.
Then $a\in \Zl^{k+1}\hm A$. We set $\delta^k[c]=[a]\in \Ho^{k+1}\hm A$.
In particular, we have
\begin{equation*}
\delta^0[c]=[\upgam b-b]\ \,\text{for}\ c\in \Zl^0\hs C,\qquad
      \delta^1[c]=[\upgam b +b]\ \,\text{for}\ c\in \Zl^1\hs C.
\end{equation*}

\begin{definition}\label{d:anti}
Let $A,A'$ be two $\Gamma$-modules.
By a {\em $\Gamma$-anti-equivariant} homomorphism $A\to A'$
we mean a homomorphism of abelian groups
\[\varphi\colon A\to A'\quad\text{such that}\quad
      \upgam\varphi(a)=-\varphi(\upgam a)\ \text{ for all }a\in A.\]
\end{definition}

\begin{lemma}[obvious]
\label{l:anti}
Let $\varphi\colon A\to A'$ be a $\Gamma$-anti-equivariant homomorphism of $\Gamma$-modules.
Then for any $k$ in $\Z$, the homomorphism $\varphi$ restricts to homomorphisms
\[\Zl^k\hm A\to \Zl^{k+1}\hm A'\quad\ {and}\quad\  \Bd^k\hm A\to \Bd^{k+1}\hm A'\]
and induces a homomorphism on cohomology
\[\varphi_*^k\colon \Ho^k\hm A\to \Ho^{k+1}\hm A'.\]
If, moreover, $\varphi$ is an isomorphism of abelian groups,
then $\varphi_*^k$ is an isomorphism for each $k$.
\end{lemma}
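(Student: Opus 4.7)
The plan is to exploit the fact that $\Gamma$-anti-equivariance of $\varphi$ means $\gamma\circ\varphi=-\varphi\circ\gamma$, which should intertwine the coboundary operators $d^k=\gamma+(-1)^{k+1}$ with a shift of one in degree and an overall sign. Concretely, my first step would be to verify the identity
\[
d^{k+1}\circ\varphi\,=\,-\,\varphi\circ d^k\qquad\text{for all }k\in\Z,
\]
by a single line expanding both sides using the definition of $d^k$ and pushing $\gamma$ across $\varphi$ at the cost of a sign. This identity is the only computation required; everything else follows formally.

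Given this intertwining, the two containments are routine. If $a\in\Zl^k\hm A$, then $d^{k+1}\varphi(a)=-\varphi(d^k a)=0$, so $\varphi(a)\in\Zl^{k+1}\hm A'$. If $a=d^{k-1}a'\in\Bd^k\hm A$, then the same identity (with $k$ replaced by $k-1$) gives $\varphi(a)=-d^k\varphi(a')$, which lies in $\Bd^{k+1}\hm A'=\im d^k$ because this is a subgroup and hence stable under negation. Passing to quotients yields the well-defined homomorphism $\varphi_*^k\colon\Ho^k\hm A\to\Ho^{k+1}\hm A'$, $[z]\mapsto[\varphi(z)]$.

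For the last assertion, if $\varphi$ is bijective then $\varphi^{-1}$ is also $\Gamma$-anti-equivariant, so the preceding construction applied to $\varphi^{-1}$ produces a homomorphism $(\varphi^{-1})_*^{k+1}\colon\Ho^{k+1}\hm A'\to\Ho^{k+2}\hm A=\Ho^k\hm A$, the final equality being the literal two-periodicity of Tate cohomology built into the definitions (the subgroups $\Zl^\bullet$ and $\Bd^\bullet$ depend only on the parity of the index, since $d^k$ does). Composing the two induced maps in either order gives the identity on cohomology, so $\varphi_*^k$ is an isomorphism. There is no substantive obstacle here; the only things to watch are the single sign in the intertwining identity and the degree shift when handling the inverse, which is doubtless the reason the lemma is labelled ``obvious'' in the paper.
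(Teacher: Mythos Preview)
Your proof is correct and is exactly the natural verification one would write for this lemma; the paper itself gives no proof (it is labelled ``obvious''), so there is nothing to compare beyond noting that your single intertwining identity $d^{k+1}\circ\varphi=-\varphi\circ d^k$ is the right way to package the computation.
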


\begin{definition}
Let $A$ be a $\Gamma$-module. We denote by $(\ii)A$ the abelian group
consisting  of formal expressions $\{(\ii)a\mid a\in A\}$
with the addition law
$$(\ii)a+(\ii)a'=(\ii)(a+a').$$
Then $(\ii)(-a)=-(\ii)a$.
We define a $\Gamma$-action on $(\ii)A$ by
$$\upgam\big((\ii)a\big)=-(\ii)\hs \upgam\hm a.$$
\end{definition}

We have a canonical isomorphism of $\Gamma$-modules
\[(\ii)(\ii)A\isoto A,\quad (\ii)(\ii)a\mapsto -a.\]

\begin{corollary}[\hs from Lemma \ref{l:anti}\hs]
\label{c:ii, k+1}
For any $\Gamma$-module $A$, the canonical $\Gamma$-anti-equivariant isomorphism
\[A\to(\ii)A,\quad a\mapsto (\ii)a\ \ \text{for }a\in A,\]
induces canonical isomorphisms
\[ \Ho^k\hm A\isoto \Ho^{k+1}\hs(\ii) A\]
for all $k\in \Z$.
\end{corollary}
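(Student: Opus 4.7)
The corollary is essentially a direct consequence of Lemma \ref{l:anti}, so the plan is very short. The map $\varphi\colon A\to(\ii)A$, $a\mapsto(\ii)a$, is manifestly a bijection of the underlying sets, and it respects addition because the addition on $(\ii)A$ was defined by $(\ii)a+(\ii)a'=(\ii)(a+a')$; hence $\varphi$ is an isomorphism of abelian groups. The main (trivial) verification is that $\varphi$ is $\Gamma$-anti-equivariant in the sense of Definition \ref{d:anti}: using the definition $\upgam\bigl((\ii)a\bigr)=-(\ii)\hs\upgam a$ of the $\Gamma$-action on $(\ii)A$, one computes
\[
\upgam\varphi(a)=\upgam\bigl((\ii)a\bigr)=-(\ii)\hs\upgam a=-\varphi(\upgam a)
\]
for every $a\in A$, which is exactly the anti-equivariance condition.

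Having checked these two points, I would simply invoke Lemma \ref{l:anti}. That lemma, applied to the $\Gamma$-anti-equivariant isomorphism $\varphi$, directly produces the claimed homomorphisms $\varphi_*^k\colon\Ho^k\hm A\to\Ho^{k+1}\hs(\ii)A$ for every $k\in\Z$, and asserts that they are isomorphisms because $\varphi$ is an isomorphism of abelian groups. There is nothing further to check: the canonicality is inherited from the canonical nature of $\varphi$.

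There is no substantive obstacle here; the content of the corollary is really the bookkeeping definition of $(\ii)A$ together with Lemma \ref{l:anti}. The only thing worth flagging is a sanity check that the construction is consistent with the involution $(\ii)(\ii)A\isoto A$ noted just before the corollary: iterating $\varphi$ sends $a\mapsto(\ii)a\mapsto(\ii)(\ii)a$, which under the canonical isomorphism $(\ii)(\ii)A\isoto A$ corresponds to $a\mapsto -a$. Correspondingly, the composition $\varphi_*^{k+1}\circ\varphi_*^k\colon\Ho^k\hm A\to\Ho^{k+2}\hm A\cong\Ho^k\hm A$ equals multiplication by $-1$, which by Lemma \ref{l:2-xi} agrees with multiplication by $+1$; so everything is internally consistent. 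This remark is not strictly needed for the proof, but it is reassuring and could be included as a brief comment.
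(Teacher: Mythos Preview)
Your proof is correct and is exactly the approach the paper intends: the corollary is stated without a separate proof precisely because it follows immediately from Lemma~\ref{l:anti} once one observes that $a\mapsto(\ii)a$ is a $\Gamma$-anti-equivariant isomorphism of abelian groups, which is what you verify. The additional sanity check about iterating $\varphi$ is a nice remark but, as you note, not needed.
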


\section{Hypercohomology}
\label{s:hyper}

\begin{definition}
A {\em short complex of $\Gamma$-modules}
is a morphism of $\Gamma$-modules $A_{1}\labelt{\partial} A_0$.
\end{definition}

\subsec{}
For a short complex of $\Gamma$-modules $A_{1}\labelt{\partial} A_0$ and for
$k\in\Z$, we define a differential
\[D^k\colon\, A_1\oplus A_0\to A_1\oplus A_0,\quad D^k(a_1,a_0)=
      \big(-d^{k+1} a_1,\, d^k a_0-\partial a_1\big)\ \ \text{for}\ \, (a_1,a_0)\in A_1\oplus A_0\hs.\]
We calculate:
\begin{align*}
D^k\big( D^{k-1}(a_1,a_0)\big)&=D^k\big(\hs -d^k a_1,\ d^{k-1} a_0-\partial a_1\hs\big)\\
    &=\big(d^{k+1}d^k a_1,\ d^k d^{k-1} a_0-d^k\partial a_1+\partial\hs d^k a_1\big)=(0,0).
\end{align*}
Thus $D^k\circ D^{k-1}=0$.

We define the {\em  $k$-th Tate hypercohomology group} $\H^k(A_{1}\labelt{\partial} A_0)$ by
\begin{equation*}
\H^k(A_{1}\labelt{\partial} A_0)=\Zl^k(A_{1}\labelt{\partial} A_0)\hs/\hs\Bd^k(A_{1}\labelt{\partial} A_0),
\end{equation*}
where
\begin{equation*}
\begin{aligned}
&\Zl^k(A_{1}\labelt{\partial} A_0)=\ker D^k=\big\{(a_{1},a_0)\in A_{1}\oplus A_0,\ \big|\
      d^{k+1} a_1=0,\,  d^k a_0=\partial a_{1}\big\},\\
&\Bd^k(A_{1}\labelt{\partial} A_0)={\rm im\,} D^{k-1}=
    \big\{(-d^k a'_1,\  d^{k-1} a'_0-\partial a'_1\hs) \ \big|\ (a'_1, a_0')\in A_1\oplus A_0\big\}.
\end{aligned}
\end{equation*}
For simplicity we write $\H^k(A_{1}\to A_0)$ instead of $\H^k(A_{1}\labelt{\partial} A_0)$.
If $(a_1,a_0)\in\Zl^k(A_{1}\to A_0)$, we write $[a_1,a_0]\in\Ho^k(A_{1}\to A_0)$
for the cohomology class of $(a_1,a_0)$.
\begin{examples}\
\begin{enumerate}
\item We have an isomorphism
\[\Ho^k \hm A_0\longisoto \H^k(0\to A_0),\quad [a_0]\mapsto [0,a_0].\]

\item We have an isomorphism
\[\H^k(A_1\to 0)\longisoto \Ho^{k+1}\hm A_1, \quad [a_1,0]\mapsto [a_1].\]
\end{enumerate}
\end{examples}

The correspondence $(A_{1}\to A_0)\rightsquigarrow \H^k(A_{1}\to A_0)$ is a functor
from the category of short complexes of $\Gamma$-modules to the category of abelian groups.
Moreover, a short exact sequence of short complexes of $\Gamma$-modules
\[0\to(A_{1}\to A_0)\labelto\alpha (B_{1}\to B_0)\labelto\beta (C_{1}\to C_0)\to 0\]
gives rise to a hypercohomology exact sequence
\begin{equation}\label{e:exact-complexes}
\cdots\ \H^k(A_{1}\to A_0)\labelto{\alpha_*^k} \H^k(B_{1}\to B_0)\labelto{\beta_*^k}
                         \H^k(C_{1}\to C_0)\labelto{\delta^k}\H^{k+1}(A_{1}\to A_0)\ \cdots
\end{equation}

We specify the maps $\delta^k$ in the sequence \eqref{e:exact-complexes}.
Let $(c_1,c_0)\in \Zl^k(C_0\to C_1)\subseteq C_1\oplus C_0$.
We lift $(c_1,c_0)$ to some $(b_1,b_0)\in B_1\oplus B_0$ and set $(a_1,a_0)=D^k(b_1,b_0)$.
Then $(a_1,a_0)\in \Zl^{k+1}(A_1\to A_0)$,
and we set $\delta^k[c_1,c_0]=[a_1,a_0]\in\H^{k+1}(A_1\to A_0)$.

\begin{example}
Applying \eqref{e:exact-complexes} to the short exact sequence of complexes
\[0\to (0\to A_0)\labelto{\lambda} (A_{1}\to A_0)\labelto{\mu} (A_{1}\to 0)\to 0\]
with \,$\lambda(0,a_0)=(0,a_0)$, \,$\mu(a_1,a_0)=(a_1,0)$,
\,we obtain an exact sequence
\begin{equation}\label{e:coho-hyper}
\cdots\longrightarrow \Ho^k\hm A_{1} \labelto{\partial_*^k} \Ho^k\hm A_0\labelto{\lambda^k_*} \H^k(A_{1}\to A_0)
     \labelto{\mu_*^k} \Ho^{k+1}\hm A_{1}\,\labelto{\partial_*^{k+1}}\, \Ho^{k+1}\hm A_0\longrightarrow\cdots
\end{equation}
\end{example}

\begin{lemma}\label{l:lambda-mu}
The maps  $\lambda_*^k$, $\mu_*^k$, and  $\partial_*^{k+1}$,
in \eqref{e:coho-hyper} are the following:
\begin{align*}
&\lambda_*^k\colon\,\Ho^k\hm A_0\hs\to\hs \H^k(A_{1}\to A_0),\qquad    [a_0]\mapsto [0,a_0],\\
&\mu_*^k\colon\,\H^k(A_{1}\to A_0)\hs\to\hs \Ho^{k+1}\hm A_{1},\quad   [a_1,a_0]\mapsto [a_1],\\
&\partial_*^{k+1}\colon \Ho^{k+1}\hm A_1\to \Ho^{k+1}\hm A_0,\qquad\quad
             [a_1]\mapsto[\partial a_1].
\end{align*}
\end{lemma}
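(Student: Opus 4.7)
The first two formulas follow immediately from the functoriality of hypercohomology together with the isomorphisms $\Ho^k\hm A_0 \isoto \H^k(0 \to A_0)$, $[a_0] \mapsto [0, a_0]$, and $\H^k(A_1 \to 0) \isoto \Ho^{k+1}\hm A_1$, $[a_1, 0] \mapsto [a_1]$, recorded in the two examples preceding the lemma. I would simply evaluate the chain maps $\lambda$ and $\mu$ on cocycle representatives: $\lambda$ sends $(0, a_0) \in \Zl^k(0 \to A_0)$ to $(0, a_0) \in \Zl^k(A_1 \to A_0)$, producing $\lambda_*^k[a_0] = [0, a_0]$; and $\mu$ sends $(a_1, a_0) \in \Zl^k(A_1 \to A_0)$ to $(a_1, 0) \in \Zl^k(A_1 \to 0)$, which represents $[a_1]$ under the second isomorphism, giving $\mu_*^k[a_1, a_0] = [a_1]$.

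The formula for $\partial_*^{k+1}$ is the only substantive point. I would apply the general recipe for $\delta^k$ stated immediately before the lemma. Given $[a_1] \in \Ho^{k+1}\hm A_1$ with $a_1 \in \Zl^{k+1}\hm A_1$, its image in $\H^k(A_1 \to 0)$ is $[a_1, 0]$, which lifts through $\mu$ to the element $(a_1, 0) \in A_1 \oplus A_0$. Then
\[
D^k(a_1, 0) = \bigl(-d^{k+1}a_1,\; d^k\cdot 0 - \partial a_1\bigr) = (0,\, -\partial a_1),
\]
using $d^{k+1} a_1 = 0$. This element lies in $\im\lambda$, so it represents the class $[0, -\partial a_1] \in \H^{k+1}(0 \to A_0)$, which under the first isomorphism corresponds to $-[\partial a_1] \in \Ho^{k+1}\hm A_0$.

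To conclude, I would invoke Lemma \ref{l:2-xi}: since $2\hs[\partial a_1] = 0$ in $\Ho^{k+1}\hm A_0$, one has $-[\partial a_1] = [\partial a_1]$, which is the asserted formula. I do not expect a genuine obstacle here; the only delicate point is the sign bookkeeping coming from the minus in the first component of $D^k$ and in the expression $d^k a_0 - \partial a_1$, and this is absorbed by the 2-torsion of Tate cohomology. Beyond that, the lemma is a direct unravelling of the definition of the connecting map, confirming that after the identifications it reduces to the ordinary functorial map on cohomology induced by $\partial$.
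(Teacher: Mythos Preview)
Your proof is correct and follows essentially the same approach as the paper: the paper likewise dismisses $\lambda_*^k$ and $\mu_*^k$ as immediate, then lifts $(a_1,0)$ through $\mu$, computes $D^k(a_1,0)=(0,-\partial a_1)$, and invokes Lemma~\ref{l:2-xi} to absorb the sign.
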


\begin{proof}
We only need to compute the map $\partial_*^{k+1}$.
Let $a_1\in \Zl^{k+1}\hm A_1$.
We lift ${(a_1,0)\in \Zl^k(A_1\to 0)}$ to $(a_1,0)\in A_1\oplus A_0$,
and apply $D^{k}$ to the lift.
We obtain
\[ D^k(a_1,0)=(-d^{k+1}a_1,-\partial a_1)=(0,-\partial a_1)\in \Zl^{k+1}(0\to A_0).\]
Identifying $\Ho^{k+1}(0\to A_0)\cong\Ho^{k+1}\hm A_0$, we obtain
\[ [0,-\partial a_1]=-[\partial a_1]=[\partial a_1]\in \Ho^{k+1}\hm A_0\hs,\]
where $-[\partial a_1]=[\partial a_1]$ by Lemma \ref{l:2-xi}.
\end{proof}

\begin{lemma}[well-known]
\label{l:quasi-inj}
Let $\partial\colon A_1\to A_0$ be an {\emm injective} homomorphism of $\Gamma$-modules.
Then the canonical homomorphism
\[j_*^k\colon\H^k(A_{1}\labelt{\partial} A_0)\lra \H^k(0\to \coker \partial)= \Ho^k\hs\coker \partial,
  \quad[a_1,a_0]\mapsto[0, a_0+\im\partial]\mapsto[a_0+\im\partial]\]
induced by the canonical morphism of complexes
\[j\colon (A_1\to A_0)\lra (0\to\coker \partial),\quad (a_1, a_0)\mapsto (0, a_0+\im\partial)\]
is an isomorphism.
\end{lemma}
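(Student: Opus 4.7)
The plan is to derive the lemma from the general long exact sequence \eqref{e:exact-complexes} of hypercohomology, applied to a well-chosen short exact sequence of short complexes. Specifically, since $\partial$ is injective, the obvious morphisms fit into a short exact sequence
\begin{equation*}
0 \to (A_1 \labelt{\id} A_1) \labelto{\alpha} (A_1 \labelt{\partial} A_0) \labelto{\beta} (0 \to \coker \partial) \to 0,
\end{equation*}
where $\alpha$ is the identity on $A_1$ in degree $1$ and equals $\partial$ in degree $0$ (a chain map because $\partial \circ \id = \id \circ \partial$), while $\beta$ is zero in degree $1$ and the canonical projection $A_0 \onto \coker\partial$ in degree $0$. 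Injectivity of $\partial$ is what guarantees exactness at the right-hand term in degree $0$. Once the outer complex is shown to be acyclic, the connecting maps in \eqref{e:exact-complexes} will force $\beta_*^k$ to be an isomorphism $\H^k(A_1 \to A_0) \isoto \H^k(0 \to \coker\partial) \cong \Ho^k(\coker\partial)$, and a direct inspection identifies this composite with the map $j_*^k$ in the statement.

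The main step is therefore to check acyclicity of $(A_1 \labelt{\id} A_1)$. Unwinding the definitions from Section~\ref{s:hyper}, a $k$-cocycle is a pair $(a_1, a_0) \in A_1 \oplus A_1$ with $d^{k+1} a_1 = 0$ and $a_1 = d^k a_0$; the second relation automatically implies the first via $d^{k+1} \circ d^k = 0$, so every cocycle has the form $(d^k a_0,\, a_0)$. A direct substitution yields
\begin{equation*}
D^{k-1}(-a_0,\, 0) \;=\; \bigl(-d^k(-a_0),\ d^{k-1}\hm 0 - \id(-a_0)\bigr) \;=\; (d^k a_0,\, a_0),
\end{equation*}
so every cocycle is a coboundary and $\H^k(A_1 \labelt{\id} A_1) = 0$ for all $k \in \Z$.

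I do not expect a serious obstacle. The only points requiring mild care are the sign bookkeeping in the differential $D^k$ while verifying the acyclicity, and confirming that under the canonical identification $\H^k(0 \to \coker\partial) \cong \Ho^k(\coker\partial)$ the isomorphism supplied by the long exact sequence really agrees on the nose with the explicit formula $[a_1, a_0] \mapsto [a_0 + \im\partial]$ exhibited in the statement of the lemma.
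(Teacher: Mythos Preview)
Your proof is correct but takes a different route from the paper's. The paper argues directly, by element-chasing: it shows surjectivity by lifting a cocycle $a_0+\im\partial\in\Zl^k\coker\partial$ to a hypercocycle $(a_1,a_0)$ (using injectivity of $\partial$ to verify $d^{k+1}a_1=0$), and shows injectivity by unwinding the condition $[a_1,a_0]\in\ker j_*^k$ to exhibit $(a_1,a_0)$ as a coboundary $D^{k-1}(a_1',a_0')$, again using injectivity of $\partial$ at a key step. Your approach instead invokes the long exact sequence \eqref{e:exact-complexes} for the short exact sequence of complexes
\[0\to(A_1\labelt{\id}A_1)\to(A_1\labelt{\partial}A_0)\to(0\to\coker\partial)\to 0\]
and reduces everything to the acyclicity of $(A_1\labelt{\id}A_1)$, which you verify cleanly. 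This is more conceptual and avoids the two separate chases; it also fits the general pattern by which quasi-isomorphisms are shown to induce isomorphisms on (hyper)cohomology, so in a sense it anticipates the argument the paper gives next for Proposition~\ref{l:quasi}. The paper's direct proof, on the other hand, is entirely self-contained and does not rely on the exactness of \eqref{e:exact-complexes}, which the paper states but does not prove.
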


\begin{proof}
We prove the surjectivity. Let\,
$a_0+\im\partial\in \Zl^k \hs\coker \partial$.
Then\, $d^k(a_0+\im\partial)=0$, that is, $d^k a_0=\partial a_1$ for some $a_1\in A_1$.
We have
\[\partial\hs d^{k+1}a_1=d^{k+1}\partial a_1=d^{k+1}d^k a_0=0.\]
Since $\partial$ is injective, we conclude that $d^{k+1}a_1=0$.
We see that
\[\big(a_1,a_0\big)\in\Zl^k(A_1\to A_0)\quad\text{and}\quad j_*^k\big[a_1,a_0\big]=[a_0+\im\partial],\]
which proves the surjectivity.

We prove the injectivity. Let
\[(a_1,a_0)\in \Zl^k(A_1\to A_0),\quad [a_1,a_0]\in\ker j_*^k.\]
Then
\[ a_0+\im\partial=d^{k-1}(a_0'+\im\partial)\quad\text{for some}\ a_0'\in A_0\hs,\]
that is,
\[a_0=d^{k-1} a_0'-\partial a_1' \quad\text{for some}\ a_0'\in A_0,\ a_1'\in A_1,\]
and, moreover,
\[(0,0)=D^k(a_1,a_0)=(-d^{k+1}a_1,\hs d^k a_0-\partial a_1).\]
We see that
\[\partial a_1=d^k a_0=d^k(d^{k-1} a_0'-\partial a_1')=-d^k\partial a_1'=-\partial\hs d^k a_1'.\]
Since $\partial$ is injective, we conclude that $a_1=-d^k a_1'$.
It follows that
\[(a_1,a_0)=(-d^k a_1',\hs d^{k-1}a_0'-\partial a_1')=D^{k-1}(a_1', a_0').\]
Thus $[a_1,a_0]=0$ and the homomorphism $j_*$ is injective,
which completes the proof of the lemma.
\end{proof}

\begin{example}
Applying \eqref{e:exact-complexes} to the short exact sequence of complexes
\[0\to(\ker\hm\partial\to 0)\lra (A_{1}\labelt{\partial} A_0)\lra (\im \partial\into A_0)\to 0,\]
where $\H^k(\im \partial\into A_0)\cong \Ho^k\hs\coker\hm\partial$ by Lemma \ref{l:quasi-inj},
we obtain an exact sequence
\begin{equation}\label{e:coho-hyper-1}
\cdots\ \Ho^{k-1}\hs\coker\hm\partial\to\Ho^{k+1}\ker\hm\partial\to \H^k(A_{1}\labelt{\partial} A_0)
                   \to\Ho^k\coker\hm\partial\to\Ho^{k+2}\ker\hm\partial\ \cdots
\end{equation}
\end{example}

\begin{definition}
A morphism of short complexes
\begin{equation}\label{e:quasi}
\varphi\colon\,(A_{1}\labelt{\partial} A_0)\,\to\, (A'_{1}\labelt{\parpr} A'_0)
\end{equation}
is called a {\em quasi-isomorphism} if the induced homomorphisms
\[\ker\partial\to\ker\parpr \quad \text{ and }\quad \coker\partial\to\coker\parpr\]
are isomorphisms.
\end{definition}

\begin{examples}\label{Ex:inj-surj}\
\begin{enumerate}
\item If $ A_{1}\into A_0$ is injective, then
  \   $(A_{1}\into A_0)\,\to\,\big( 0,\coker[ A_1\hm\into\hm A_0]\big)$ \ is a quasi-isomorphism.
\item If $ A_{1}\onto A_0$ is surjective, then
   \  $\big(\ker\,[ A_1\hm\onto\hm A_0]\to 0\big)\,\to\,(A_{1}\onto A_0)$ \ is a quasi-isomorphism.
\end{enumerate}
\end{examples}

\begin{proposition}[well-known]
\label{l:quasi}
  A quasi-isomorphism of complexes of $\Gamma$-modules  \eqref{e:quasi}
induces isomorphisms on the hypercohomology
\[\varphi_*^k\colon\hs \H^k(A_{1}\to A_0)\isoto \H^k(A'_{1}\to A'_0).\]
\end{proposition}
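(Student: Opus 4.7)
My plan is to deduce the proposition from the hypercohomology long exact sequence \eqref{e:coho-hyper-1}, which expresses $\H^k(A_{1}\labelt{\partial} A_0)$ in terms of the Tate cohomology groups $\Ho^\bullet\hs\ker\partial$ and $\Ho^\bullet\hs\coker\partial$.

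The first step is to observe that \eqref{e:coho-hyper-1} is functorial in the short complex. Recall that it is derived by applying \eqref{e:exact-complexes} to the short exact sequence of short complexes
\[0 \to (\ker\partial \to 0) \to (A_1 \labelt{\partial} A_0) \to (\im\partial \into A_0) \to 0\]
together with the identification $\H^k(\im\partial \into A_0) \cong \Ho^k\hs\coker\partial$ provided by Lemma \ref{l:quasi-inj}. Any morphism $\varphi\colon(A_1\to A_0)\to(A'_1\to A'_0)$ of short complexes carries kernels into kernels and images into images, and therefore induces a morphism between the corresponding short exact sequences of complexes. By naturality of the hypercohomology long exact sequence \eqref{e:exact-complexes} and of the isomorphism in Lemma \ref{l:quasi-inj}, this produces a commutative ladder whose rows are instances of \eqref{e:coho-hyper-1} for $(A_1\to A_0)$ and $(A'_1\to A'_0)$, with central vertical arrow $\varphi_*^k$ and with the four outer vertical arrows induced, via the functor $\Ho^\bullet$, by the $\Gamma$-equivariant maps $\ker\partial\to\ker\parpr$ and $\coker\partial\to\coker\parpr$.

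Now I would invoke the quasi-isomorphism hypothesis: by Definition, the induced maps $\ker\partial\to\ker\parpr$ and $\coker\partial\to\coker\parpr$ are isomorphisms of $\Gamma$-modules. Since $\Ho^j$ is a functor on $\Gamma$-modules, it sends isomorphisms to isomorphisms, so all four outer vertical arrows in the ladder are isomorphisms. The five lemma then gives that $\varphi_*^k$ is an isomorphism for every $k\in\Z$, as required.

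The one nontrivial point to double-check is the naturality of \eqref{e:coho-hyper-1}, i.e.\ that the passage $(A_1\labelt{\partial}A_0)\rsa\bigl[0\to(\ker\partial\to 0)\to(A_1\to A_0)\to(\im\partial\into A_0)\to 0\bigr]$ is functorial on the category of short complexes; but this is immediate. There is no serious obstacle: all the real content of the argument is already packaged inside Lemma \ref{l:quasi-inj} and the exact sequence \eqref{e:coho-hyper-1}.
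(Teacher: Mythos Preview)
Your proof is correct and follows essentially the same approach as the paper: both use the long exact sequence \eqref{e:coho-hyper-1} to build a commutative ladder, observe that the quasi-isomorphism hypothesis makes the four outer vertical arrows isomorphisms, and conclude by the five lemma. You have simply spelled out the functoriality of \eqref{e:coho-hyper-1} in more detail than the paper, which states the commutative diagram directly.
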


\begin{proof}[Idea of an elementary proof]
Using \eqref{e:coho-hyper-1},
we obtain from \eqref{e:quasi} an exact commutative diagram
\begin{equation*}
\xymatrix{
 \Ho^{k-1}\coker\partial\ar[r]\ar[d]_\cong &\Ho^{k+1}\ker\partial\ar[r]\ar[d]_\cong
                                           &\H^k(A_{1}\labelt{\partial} A_0)\ar[r]\ar[d]^-{\varphi^k_*}
                                           &\Ho^{k}\coker\partial\ar[r]\ar[d]^\cong  &\Ho^{k+2}\ker\partial\ar[d]^\cong \\
 \Ho^{k-1}\coker\parpr\ar[r]               &\Ho^{k+1}\ker\parpr\ar[r]                &\H^k(A_{1}'\labelt{\parpr} A_0')\ar[r]
                                           &\Ho^{k}\coker\parpr\ar[r]                &\Ho^{k+2}\ker\parpr
}
\end{equation*}
in which the middle vertical arrow $\varphi^k_*$ is an isomorphism  by the five lemma.
\end{proof}

\begin{example}\label{ex:HH(inj-sur)}
We have  $\H^k(A_{1}\to 0)\cong\Ho^{k+1}\hm A_{1}$.
Hence, if a homomorphism $\partial\colon A_{1}\to A_0$ is surjective, then
by Example \ref{Ex:inj-surj}(2) and Proposition  \ref{l:quasi} we have
\[\H^k(A_{1}\labelt\partial A_0)\,\cong\,\H^k(\ker\partial\to 0)\,\cong\,\Ho^{k+1}\ker\partial.\]
\end{example}

\subsec{}
Let
\[0\to A\labelto{i} B\labelto{j} C\to 0\]
be a short exact sequence of $\Ga$-modules, where we identify $A$ with $i(A)\subseteq B$.
The quasi-isomorphism
\[i_\#\colon (A\to 0)\,\to\, (B\to C)\]
induces isomorphisms
\[i_\#^k\colon \Ho^{k+1}\hm A\to \H^k(B\to C),\quad [a]\mapsto [a,0].\]
 Combining the cohomology exact sequences \eqref{e:Ga-ABC} and \eqref{e:coho-hyper},
we obtain a diagram with exact rows
\begin{equation}\label{e:coho-hyper-2}
\begin{aligned}
\xymatrix@C=15truemm@R=9truemm{
\Ho^k C\ar[r]^-{\delta^k}\ar@{=}[d] &\Ho^{k+1}\hm A\ar[r]^-{i_*^{k+1}}\ar[d]^-{i_\#^k} &\ \Ho^{k+1} B\ar@{=}[d]\\
\Ho^k C\ar[r]^-{\lambda_*^k}                      &\H^k(B\to C)\ar[r]^-{\mu_*^k}         &\ \Ho^{k+1} B
}
\end{aligned}
\end{equation}

\begin{lemma}\label{l:coho-hyper}
The diagram \eqref{e:coho-hyper-2} is commutative.
\end{lemma}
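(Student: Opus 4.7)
The plan is to verify the commutativity of the two squares of diagram \eqref{e:coho-hyper-2} separately. The right square is essentially tautological: for $[a]\in\Ho^{k+1}\hm A$ with $a\in\Zl^{k+1}\hm A$, the definition of $i_\#^k$ gives $i_\#^k[a]=[a,0]\in\H^k(B\to C)$, where we identify $a$ with $i(a)\in B$; by Lemma~\ref{l:lambda-mu}, $\mu_*^k[a,0]=[a]\in\Ho^{k+1}\hm B$, which coincides with $i_*^{k+1}[a]$ under the same identification.

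For the left square, I pick a representative $c\in\Zl^k\hs C$ of a class $[c]\in\Ho^k\hs C$ together with a lift $b\in B$ of $c$. By the formula for $\delta^k$ recalled in Subsection~\ref{ss:ABC-abelian}, $\delta^k[c]=[d^k b]$ with $d^k b\in A$, whence $i_\#^k\delta^k[c]=[d^k b,0]$. On the other side, $\lambda_*^k[c]=[0,c]$ by Lemma~\ref{l:lambda-mu}. A direct computation using the explicit form of $D^{k-1}$ gives
\[
D^{k-1}(-b,0)=\bigl(d^k b,\,j(b)\bigr)=(d^k b,\,c),
\]
so $(d^k b,c)\in\Bd^k(B\to C)$, that is, $[d^k b,0]=-[0,c]$ in $\H^k(B\to C)$.

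To conclude that $[d^k b,0]=[0,c]$, it suffices to show $2[0,c]=0$ in $\H^k(B\to C)$. This is a hypercohomology analogue of Lemma~\ref{l:2-xi}: since $d^k c=0$ is equivalent to $\upgam c=(-1)^k c$, a direct check yields $D^{k-1}\bigl(0,(-1)^k c\bigr)=(0,2c)$, so $2(0,c)\in\Bd^k(B\to C)$ and hence $2[0,c]=0$, completing the verification. The main obstacle here is the sign discrepancy produced by $D^{k-1}(-b,0)$, which identifies $[d^k b,0]$ with $-[0,c]$ rather than directly with $[0,c]$; the 2-torsion argument just described, in the spirit of Lemma~\ref{l:2-xi}, absorbs this sign.
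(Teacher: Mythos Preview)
Your proof is correct and follows essentially the same approach as the paper: both verify the right square by unwinding the definitions, and both handle the left square by computing $D^{k-1}(-b,0)=(d^k b,\,c)$ to obtain $[d^k b,0]=-[0,c]$ and then absorbing the sign via 2-torsion. The only cosmetic difference is that the paper invokes Lemma~\ref{l:2-xi} directly in $\Ho^k C$ (so that $-[c]=[c]$ there, and then applies the homomorphism $\lambda_*^k$), whereas you re-derive the 2-torsion inside $\H^k(B\to C)$ by the explicit coboundary $D^{k-1}\bigl(0,(-1)^k c\bigr)=(0,2c)$.
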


\begin{proof}
The right-hand rectangle clearly commutes,
and therefore it sufficed to show that the left-hand rectangle commutes.
We perform a calculation.
Let $c\in \Zl^k C$. We lift $c$ to some $b\in B$.
Then $i_\#^k\big(\delta^{k}[c]\big)=\big[d^k(b),0\big]$ and
$\lambda_*^k[c]=[0,c]=\big[0,j(b)\big]$,
whence
\[i_\#^k\big(\delta^k[c]\big)=-\lambda_*^k[c]+\big[d^k b, j(b)\big].\]
Since  $\big(d^k b, j(b)\big)=D^{k-1}(-b,0)\in \Bd^k(B\labelto{j} C)$,
we see that $i_\#^k\big(\delta^k[c]\big)=-\lambda_*^k[c].$
By  Lemma \ref{l:2-xi} we have $-[c]=[c]$, and hence
$i_\#^k\big(\delta^k[c]\big)=\lambda_*^k[c]$, as required.
\end{proof}

\section{Galois cohomology of tori and quasi-tori}
\label{s:Tori-quasi-tori}

\subsec{}
Let $\TT$ be an $\R$-torus.
Consider the {\em cocharacter group}
\[ \X_*(\TT)=\Hom(\GmC,T).\]
The group $\Gamma$ acts on $\X_*(\TT)$ by
\[(\upgam\nu)(z)=\upgam\big(\nu(\hs^{\gamma^{-1}}\! z)\big)\ \ \,
    \text{for}\  \,\nu\in\X_*(\TT),\ z\in\C^\times\]
(where in our case $\gamma^{-1}=\gamma$).
We see that $\X_*(\TT)$  is a $\Gamma$-lattice, that is,
a finitely generated free abelian group with $\Gamma$-action.

Let $L$ be a nonzero $\Gamma$-lattice.
We say that $L$ is {\em indecomposable} if it is not
a direct sum of its two nonzero $\Gamma$-sublattices.
Clearly, any $\Gamma$-lattice is a direct sum
of indecomposable lattices.

\begin{proposition}[well-known]
\label{p:indecomp}
Up to isomorphism, there are exactly three indecomposable $\Gamma$-lattices:
\begin{enumerate}
\item $\Z$ with trivial action of $\gamma$;
\item $\Z$ with the action of $\gamma$ by $-1$;
\item $\Z\oplus \Z$ with the action of $\gamma$ by the matrix \SmallMatrix{0&1\\1&0}.
\end{enumerate}
\end{proposition}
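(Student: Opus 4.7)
The plan is to establish two things: (a) the three listed lattices are pairwise non-isomorphic and indecomposable, and (b) every $\Gamma$-lattice decomposes as a direct sum of copies of these three.

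Part (a) I would handle directly. The rank-one lattices of types (1) and (2) are plainly indecomposable, and their $\Gamma$-actions distinguish them. For $L=\Z[\Gamma]$ in case (3), note that $L^\gamma=\Z(1+\gamma)$ and $\ker(\gamma+1)|_L=\Z(1-\gamma)$ together span a sublattice of index $2$. Any rank-one $\Gamma$-direct summand of $L$ must lie in one of these two sublattices, so a splitting $L=M_1\oplus M_2$ into rank-one summands would force $L\subseteq\Z(1+\gamma)\oplus\Z(1-\gamma)$, contradicting the index being $2$.

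For part (b), I would induct on $n=\operatorname{rank}L$. The case $n=1$ is immediate, since $\Aut_\Z(\Z)=\{\pm1\}$. For $n\geq 2$, set $L^+=L^\gamma$ and $L^-=\ker(\gamma+1)$; both are pure in $L$, and $L^+\oplus L^-\subseteq L$ contains $2L$. If $L=L^+\oplus L^-$, each summand splits into rank-one pieces of types (1) and (2) respectively and we are done. Otherwise, pick $y\in L\setminus(L^+\oplus L^-)$. Then $y$ and $\upgam y$ are linearly independent over $\Z$ (since $y\notin L^+\cup L^-$), so $M=\Z y+\Z\,\upgam y\cong\Z[\Gamma]$ is a type-(3) sublattice. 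Let $\tilde M$ be the saturation of $M$ in $L$; since $\tilde M\otimes\Q=M\otimes\Q$ has one-dimensional $\pm1$ eigenspaces, an elementary calculation shows that $\tilde M$ is either of type (3) or decomposes as $\tilde M^+\oplus\tilde M^-$. The second option is ruled out because it would force $y\in L^+\oplus L^-$; hence $\tilde M\cong\Z[\Gamma]$.

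The main obstacle is then to exhibit $\tilde M$ as a $\Gamma$-direct summand of $L$. Writing $\tilde M=\Z e\oplus\Z\,\upgam e$ with $\gamma$ swapping the generators, I would use that $L/\tilde M$ is torsion-free to pick an abelian-group complement $L''$ with $L=\tilde M\oplus L''$ as $\Z$-modules. The $\Z$-linear map $\bar r\colon L\to\Z$ defined by $\bar r(e)=1$, $\bar r(\upgam e)=0$, and $\bar r|_{L''}=0$ is then promoted to a $\Gamma$-equivariant retraction $r\colon L\to\tilde M$ via
\[r(x)=\bar r(x)+\bar r(\upgam x)\cdot\gamma\in\Z[\Gamma]=\tilde M,\]
and a routine check shows $r$ is $\Gamma$-equivariant with $r|_{\tilde M}=\id$. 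Therefore $L=\tilde M\oplus\ker r$ as $\Gamma$-modules, and applying the induction hypothesis to $\ker r$ (of rank $n-2$) finishes the proof. This final step exploits crucially the fact that type (3) is the \emph{regular} representation of $\Gamma$.
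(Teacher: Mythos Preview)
Your proof is correct and takes a genuinely different route from the paper's. The paper (in its Appendix) finds a primitive $\gamma$-fixed vector $e$, passes to the quotient $L/\langle e\rangle$, applies induction there, and then lifts the resulting basis back to $L$, adjusting each lifted vector by a multiple of $e$; the key observation is that at most one of the ``$f_j$'' vectors resists adjustment, and in that case $e$ and $f_j$ combine into a type-(3) pair. Your argument instead works from the top down: you locate a saturated copy of the regular module $\Z[\Gamma]$ inside $L$ whenever $L\neq L^+\oplus L^-$, and split it off using the averaged retraction $r(x)=\bar r(x)+\bar r(\upgam x)\gamma$, which is exactly the statement that $\Z[\Gamma]$ is projective (indeed free) over itself. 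The paper's approach is more bare-hands and needs no module-theoretic language, while yours is more structural and explains conceptually why type~(3) is the only indecomposable of rank $>1$: it is the regular representation, hence automatically a direct summand wherever it sits saturated. One small remark: your induction reduces rank by $2$, so you should also note the trivial base case $n=0$ alongside $n=1$.
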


\begin{proof}
See  Casselman \cite[Theorem 2]{Casselman} or Appendix \ref{s:Indecomposable} below.
\end{proof}

\subsec{}
Let $\vphi \colon \TT\to\SS$ be a homomorphism of $\R$-tori;
then $\vphi_*\colon\X_*(\TT)\to\X_*(\SS)$ is a homomorphism of $\Gamma$-lattices.
In this way we obtain an equivalence between
the category of $\R$-tori and the category of $\Gamma$-lattices.

We say that an $\R$-torus is {\em indecomposable}
if it is not a direct product of its two nontrivial subtori.
Clearly, every $\R$-torus is a direct product of indecomposable $\R$-tori.
It is also clear that a torus $\TT$ is indecomposable
if and only if its cocharacter lattice $\X_*(\TT)$ is indecomposable.

\begin{corollary}[of Proposition \ref{p:indecomp};
      see, for instance, Voskresenski\u{\i} {\cite[Section 10.1]{Vos}}]
\label{c:indecoposable-tori}
Up to isomorphism, there are exactly three indecomposable $\R$-tori:
\begin{enumerate}
\item $\TT^1_s=\GmR=(\C^\times,\, z\mapsto \bar z)$, with group of $\R$-points $\R^\times$,
      a one-dimensional split torus;
\item $\TT^1_c=(\C^\times,\, z\mapsto\bar z \hs^{-1})$,
     with group of $\R$-points $U(1)=\{z\in\C^\times\mid z\bar z=1\}$,
     a one-dimensional compact torus;
\item $\RW_{\C/\R}\GmC=\big(\hs\C^\times\times\C^\times,\, (z_1,z_2)\mapsto
       (\bar z_2,\bar z_1)\hs\big)$, with group of $\R$-points $\C^\times$.
\end{enumerate}
\end{corollary}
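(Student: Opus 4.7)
The plan is to invoke the anti-equivalence of categories between $\R$-tori and $\Gamma$-lattices via the cocharacter functor $\TT\rsa\X_*(\TT)$, and then transport Proposition \ref{p:indecomp} across this equivalence. Since an $\R$-torus $\TT$ decomposes as a direct product $\TT_1\times\TT_2$ of subtori if and only if its cocharacter lattice decomposes as the corresponding direct sum $\X_*(\TT_1)\oplus\X_*(\TT_2)$, indecomposable $\R$-tori correspond bijectively (up to isomorphism) to indecomposable $\Gamma$-lattices. By Proposition \ref{p:indecomp}, there are exactly three such lattices, so there are exactly three indecomposable $\R$-tori up to isomorphism.

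Next, for each of the three lattices listed in Proposition \ref{p:indecomp}, I would exhibit the corresponding $\R$-torus and verify the description of its complex points (together with the Galois action of $\gamma$) and its group of real points. For the trivial action of $\gamma$ on $\Z$, the associated torus is $\GmR$: indeed $\X_*(\GmR)=\Hom(\GmC,\GmC)=\Z$, and tracing through the definition $(\upgam\nu)(z)=\upgam(\nu(\upgam z))$ applied to the identity cocharacter shows that $\gamma$ acts trivially; the real points are $\R^\times$. For $\Z$ with $\gamma$ acting by $-1$, the torus is $\TT^1_c=(\C^\times,\,z\mapsto\bar z^{-1})$; one checks that the identity cocharacter $\nu(z)=z$ satisfies $(\upgam\nu)(z)=\overline{\bar z}^{-1}=z^{-1}$, i.e.\ $\upgam\nu=-\nu$, and the real points are $\{z\in\C^\times\mid z=\bar z^{-1}\}=U(1)$. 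For the swap lattice $\Z\oplus\Z$, the torus is the Weil restriction $\RW_{\C/\R}\GmC$, whose complex points are $\C^\times\times\C^\times$ with $\gamma$ acting by $(z_1,z_2)\mapsto(\bar z_2,\bar z_1)$; the two coordinate cocharacters are swapped by $\gamma$, and the real points $\{(z,\bar z)\mid z\in\C^\times\}$ are identified with $\C^\times$.

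Finally, I would note that these three tori are pairwise non-isomorphic (for instance, because their cocharacter lattices are pairwise non-isomorphic $\Gamma$-lattices, as shown in Proposition \ref{p:indecomp}), which completes the classification.

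The main step requiring care is the identification of each cocharacter lattice together with its precise Galois action; once this bookkeeping is done, the statement follows immediately from Proposition \ref{p:indecomp} via the equivalence of categories. No genuine obstacle arises, since the anti-equivalence is standard and Proposition \ref{p:indecomp} has already been established.
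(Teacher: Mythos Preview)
Your approach is correct and matches the paper's: the paper states the result as an immediate corollary of Proposition~\ref{p:indecomp}, having just noted that the functor $\TT\rsa\X_*(\TT)$ is an equivalence of categories and that indecomposability of $\TT$ is equivalent to indecomposability of $\X_*(\TT)$. One small slip: the cocharacter functor $\X_*$ is a \emph{covariant} equivalence, not an anti-equivalence (that would be the character functor $\X^*$); this does not affect your argument.
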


\begin{notation}
Let $\AA$ be a commutative algebraic $\R$-group, written multiplicatively. For $k\in \Z$, write
\[\Ho^k\AA=\Ho^k(\R,\AA)\coloneqq \Ho^k\big(\Gamma,\AA(\C)\big).\]
\end{notation}

\begin{theorem}\label{p:X*}
Let $\TT$ be an $\R$-torus. Consider the canonical $\Gamma$-equivariant {\emm evaluation homomorphism}
\begin{equation}\label{e:e-X*-T}
\ev\colon \X_*(\TT)\to\TT(\C),\quad  \nu\mapsto \nu(-1)\ \ \text{for }\nu\in \X_*(\TT).
\end{equation}
Then for any $k\in \Z$, the induced homomorphism
\begin{equation}\label{e:e*-X*-T}
 \ev_*^k\colon \Ho^k\hs\hs\X_*(\TT)\to \Ho^k\hs \TT,\quad [\nu]\mapsto\big[\nu(-1)\big]
\end{equation}
is an isomorphism.
\end{theorem}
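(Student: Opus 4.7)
The plan is to reduce the claim to the classification of indecomposable $\R$-tori (Corollary \ref{c:indecoposable-tori}) and verify it case by case, exploiting the 2-periodicity of Tate cohomology of $\Gamma = \Z/2\Z$.

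First I would note that both sides are additive functors in $\TT$: for the left-hand side, $\X_*$ turns a direct product of $\R$-tori into a direct sum of $\Gamma$-lattices, and Tate cohomology of $\Gamma$-modules commutes with finite direct sums; for the right-hand side, $\TT\mapsto\TT(\C)$ turns products into products and again Tate cohomology commutes with them. The evaluation \eqref{e:e-X*-T} is manifestly natural and additive. Since every $\R$-torus is a direct product of indecomposable tori, it suffices to prove the theorem for each of the three indecomposable $\R$-tori of Corollary \ref{c:indecoposable-tori}. A second reduction: from the very definition, the coboundary satisfies $d^{k+2}=d^k$, so the complex $A\to A\to A\to\cdots$ is 2-periodic and $\Ho^k\cong\Ho^{k+2}$ on both sides; hence only $k=0$ and $k=1$ need to be checked.

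Next, I would dispatch the three cases directly. For $\TT=\GmR$ one has $\X_*(\TT)=\Z$ with trivial $\Gamma$-action, giving $\Ho^0=\Z/2\Z$ (generated by $1$) and $\Ho^1=0$, while on the group side $\GmR(\C)=\C^\times$ with $z\mapsto\bar z$ yields $\Ho^0=\R^\times/\R_{>0}=\{\pm1\}$ and $\Ho^1=0$ by Hilbert 90; the evaluation takes the generator $1\in\Z$ to $-1\in\R^\times$, which generates $\{\pm 1\}$. For $\TT=\TT^1_c$ the lattice $\X_*(\TT)=\Z$ with $\gamma$ acting by $-1$ gives $\Ho^0=0$ and $\Ho^1=\Z/2\Z$, and $\TT^1_c(\C)=\C^\times$ with $z\mapsto\bar z^{-1}$ gives $\Ho^0=U(1)/U(1)=0$ and $\Ho^1=\R^\times/\R_{>0}=\{\pm1\}$ (where $\Zl^1=\R^\times$ and $\Bd^1=\{(z\bar z)^{-1}\}=\R_{>0}$); again $\ev(1)=\nu_0(-1)=-1$ generates. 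For $\TT=\RW_{\C/\R}\GmC$, the lattice $\X_*(\TT)=\Z[\Gamma]$ is coinduced from the trivial subgroup, so its Tate cohomology vanishes by Shapiro's lemma; likewise $\TT(\C)=\C^\times\times\C^\times$ with the swap action is an induced $\Gamma$-module, so its Tate cohomology also vanishes. Thus both sides are zero and the map is trivially an isomorphism.

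The main obstacle is essentially bookkeeping: carefully verifying that in the two nontrivial indecomposable cases the evaluation map really carries the chosen generator of $\Ho^k\X_*(\TT)$ to the nontrivial class in $\Ho^k\hs\TT$, rather than to zero. Once the classification and periodicity reductions are in place, no deep input beyond Hilbert 90 (implicit in the computation for $\GmR$) and Shapiro's lemma (for the restriction of scalars torus) is required; everything else is a direct evaluation of the groups $\Zl^k$ and $\Bd^k$ from their definitions in Section \ref{s:Abelian}.
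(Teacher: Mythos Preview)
Your proof is correct and is essentially the paper's first proof (of three offered): reduce to elementary tori and check by hand. The only variation is that you fully decompose $\TT$ into indecomposables via Proposition~\ref{p:indecomp} and use additivity, whereas the paper checks only $\TT^1_s$ and $\TT^1_c$ and then handles an arbitrary $\TT$ via the short exact sequence $1\to\TT_0\to\TT\to\TT/\TT_0\to1$ (with $\TT_0$ the maximal compact subtorus) and the five lemma.
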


We give three different proofs of this theorem.

\begin{proof}[A proof using reduction to the split and compact cases]
One checks immediately that the theorem holds for $\TT^1_s$ and $\TT^1_c$.
It follows that the theorem holds for the split tori and for the compact tori.

Now let $\TT$ be an arbitrary $\R$-torus, and let $\TT_0\subseteq\TT$ denote its maximal compact subtorus.
Then $\TT/\TT_0$ is a split torus.
We have an exact commutative diagram
\[
\xymatrix{
0\ar[r] &\X(\TT_0)\ar[r]\ar[d] &\X_*(\TT) \ar[r]\ar[d] &\X_*(\TT/\TT_0)\ar[r]\ar[d] &0\\
1\ar[r] &\TT_0(\C) \ar[r] &\TT(\C) \ar[r] &(\TT/\TT_0)(\C)\ar[r] &1
}
\]
in which the vertical arrows are the evaluation homomorphisms.
We obtain a commutative diagram in which the rows
are the corresponding cohomology exact sequences.
Since the theorem holds for the compact torus $\TT_0$ and for the split torus $\TT/\TT_0$,
by the five lemma it holds for $\TT$ as well.
\end{proof}

\begin{proof}[A proof using the exponential map]
We write $\tl=\Lie \TT$ and identify $\X_*(\TT)$ with an additive subgroup of $\tl$ via
the embedding $\nu\mapsto d\nu(1)$ for $\nu\in\X_*(\TT)$.
The  short exact sequence of $\Gamma$-modules
\begin{equation*}
0 \longrightarrow \ii\hs\X_*(\TT) \labelto\iota \tl \labelto{\Exp} T \longrightarrow 1,
\end{equation*}
where $\Exp(x)=\exp{2\pi x}$ for $x\in\tl$,
gives rise to a cohomology long exact sequence
\begin{equation*}
\cdots\lra\Ho^k\tl \labelto{\Exp_*^k} \Ho^k\hs\TT \labelto{\delta^k}
    \Ho^{k+1}\hs\ii\hs\X_*(\TT) \labelto{\iota_*^{k+1}} \Ho^{k+1}\tl \longrightarrow \cdots,
\end{equation*}
in which $\Ho^k\tl=0$ for all $k\in\Z$ by Corollary \ref{c:2-inv}.
Hence the connecting homomorphism $\delta^k$ is an isomorphism.

Let $\ii\nu\in \Zl^{k+1}\hs\ii\hs\X_*(\TT)$;
then $\upgam\nu=(-1)^k\nu$.
It follows that $\nu(-1)\in\Zl^k\hs\TT$.
We show that the isomorphism $\delta^k$ sends  $\big[\nu(-1)\big]$ to $[\ii\nu]$.
Indeed, take
\[x=\ii\nu/2\in\tl,\quad\   t=\nu(-1)=\exp\pi\ii\nu=\Exp(x).\]
Then  $\delta^k$ sends $[t]$ to the class of
\[d^k x=\upgam(\ii\nu/2)+(-1)^{k+1}\ii\nu/2=-\ihalf\big(\upgam\nu+(-1)^k\nu\big)=(-1)^{k+1}\ii\nu.\]
By Lemma \ref{l:2-xi} we have $\big[(-1)^{k+1}\ii\nu\big]=[\ii\nu]$.
Thus $\delta^k$ indeed sends  $\big[\nu(-1)\big]$ to $[\ii\nu]$.

We consider the $\Gamma$-{\em anti-equivariant} isomorphism
\[ \X_*(\TT)\to \ii\hs\X_*(\TT), \quad \nu\mapsto\ii\nu,\]
which by Lemma \ref{l:anti} induces an isomorphism
\[\Ho^k\hs\X_*(\TT)\longisoto \Ho^{k+1}\hs\ii\hs\X_*(\TT).\]
The composite isomorphism
\[\Ho^k\hs\X_*(\TT)\longisoto \Ho^{k+1}\hs\ii\hs\X_*(\TT)\labeltooo{(\delta^k)^{-1}}\Ho^k\hs\TT\]
sends   $[\nu]\in \Ho^k\hs\X_*(\TT)$ to $[\ii\hs\nu]\in \Ho^{k+1}\hs\ii\hs\X_*(\TT)$
and then to $\big[\nu(-1)\big]\in\Ho^k\hs\TT$, which completes the proof of the theorem.
\end{proof}

\begin{proof}[A proof using the Tate--Nakayama theorem]
Write $A=\C^\times$; then
\[\Ho^1(\Gamma,A)=\{1\}\quad\text{and}\quad  \#\Ho^2(\Gamma, A)=2.\]
Let $a\in\Ho^2(\Gamma, A)$ denote the only nontrivial element of this group.
The triple $(\Gamma,A,a)$ trivially satisfies  the assumptions of the Tate--Nakayama theorem;
see \cite[Section IX.8, Theorem 14]{Serre-LF}.
By this theorem, for any torsion free $\Gamma$-module $M$ and for any integer $k$,
the cup product with $a$ induces an isomorphism
\[ \Ho^{k-2}(\Gamma,M)\longisoto \Ho^k(\Gamma, M\otimes A).\]

We take $M=\X_*(T)$; then $\TT(\C)\cong\X_*(T)\otimes\C^\times=M\otimes A$, and we obtain an isomorphism
\[\Ho^k(\Gamma,\X_*(T))\cong \Ho^{k-2}(\Gamma,\X_*(T))\hs\longisoto\hs \Ho^k(\Gamma,\X_*(T)\otimes \C^\times)=\Ho^k\hs \TT.\]
One can check that this isomorphism is induced by the evaluation homomorphism  \eqref{e:e-X*-T}.
\end{proof}

\begin{proposition}
\label{p:H0-T}
Let $\TT$ be an $\R$-torus. The  canonical surjective homomorphism
\[ \Zl^0\hs\TT\to\Ho^0\hs\TT,\]
where $\Zl^0\hs\TT=\TT(\R)$, induces an isomorphism
\[ \pi_0\TT(\R)\isoto \Ho^0\hs\TT.\]
\end{proposition}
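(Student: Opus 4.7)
The plan is to identify $\Ho^0\hs\TT$ with the quotient $\TT(\R)/N\hs\TT(\C)$, where $N\colon\TT(\C)\to\TT(\R)$, $g\mapsto g\cdot\upgam g$ is the norm homomorphism (written multiplicatively since $\TT$ is written multiplicatively), and then to show that the image $N\hs\TT(\C)$ coincides with the identity component $\TT(\R)^0$ of the real Lie group $\TT(\R)$.

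First I would unwind the definitions from Subsection \ref{ss:H1-abelian} in multiplicative notation: $\Zl^0\hs\TT=\TT(\C)^\Gamma=\TT(\R)$ and $\Bd^0\hs\TT=\{g\cdot\upgam g\mid g\in\TT(\C)\}=N\hs\TT(\C)$. Hence the canonical surjection $\TT(\R)\onto\Ho^0\hs\TT$ has kernel $N\hs\TT(\C)$, and it suffices to establish the equality
\[
N\hs\TT(\C)\,=\,\TT(\R)^0.
\]

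For the inclusion $N\hs\TT(\C)\subseteq\TT(\R)^0$, I would observe that $N$ is a continuous map, $\TT(\C)$ is connected, and $N(1)=1$; therefore $N\hs\TT(\C)$ is a connected subset of $\TT(\R)$ containing the identity, and so lies in $\TT(\R)^0$.

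For the opposite inclusion $\TT(\R)^0\subseteq N\hs\TT(\C)$, I would invoke the real exponential map. The connected abelian real Lie group $\TT(\R)^0$ is of the form $\R^a\times(\R/\Z)^b$, and in particular the exponential map $\exp\colon\Lie\TT(\R)\to\TT(\R)^0$ is surjective. Given $t\in\TT(\R)^0$, write $t=\exp(X)$ for some $X\in\Lie\TT(\R)$, and set $g=\exp(X/2)\in\TT(\R)\subseteq\TT(\C)$. Since $X$ is $\gamma$-fixed, so is $g$, and hence
\[
g\cdot\upgam g\,=\,g^2\,=\,\exp(X)\,=\,t,
\]
which shows $t\in N\hs\TT(\C)$. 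The key input is the surjectivity of the real exponential on a connected abelian real Lie group; everything else is a formal unwinding of definitions, so there is no serious obstacle.
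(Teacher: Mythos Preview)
Your proof is correct and takes essentially the same approach as the paper: both show that $\Bd^0\hs\TT=N\hs\TT(\C)$ coincides with $\TT(\R)^0$. The only minor difference is in the inclusion $\TT(\R)^0\subseteq N\hs\TT(\C)$: the paper observes that the differential of $N$ is the surjective map $\tl\to\ttl(\R)$, $x\mapsto x+\upgam x$, so the image of $N$ is open (hence, being connected, equals $\TT(\R)^0$), whereas you construct explicit square roots via the real exponential.
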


\begin{proof}
The homomorphism $\tl\to\ttl(\R),\ \, x\mapsto x+\upgam x,$
is surjective, and therefore the image  $\Bd^0\hs\TT$ of the homomorphism of real Lie groups
\[ d\hs^{-1}\colon \TT(\C)\to \TT(\R),\quad t\mapsto t\cdot\hm\upgam t\]
is open.
Since the complex Lie group $\TT(\C)$ is connected, its image $d^{-1}\big(\TT(\C)\big)=\Bd^0\hs\TT$ is connected.
It follows that $\Bd^0\hs\TT=\TT(\R)^0$, and the proposition follows.
\end{proof}

\begin{corollary}[\hs{\cite[Section 5]{Casselman}}\hs]
\label{c:H0-X(T)}
The homomorphism \eqref{e:e-X*-T} induces isomorphisms
\[\Ho^0\hssh\X_*(\TT)\isoto \Ho^0\hssh\TT\isoto\pi_0\TT(\R).\]
\end{corollary}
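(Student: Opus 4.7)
The plan is to combine the two results immediately preceding the corollary. Theorem \ref{p:X*} applied with $k=0$ supplies an isomorphism
\[\ev_*^0\colon \Ho^0\hs\X_*(\TT)\isoto \Ho^0\hs\TT,\qquad [\nu]\mapsto \bigl[\nu(-1)\bigr],\]
which is exactly the first arrow in the statement. The second arrow is provided by Proposition \ref{p:H0-T}, which identifies $\Ho^0\hs\TT$ with $\pi_0\TT(\R)$ via the canonical surjection $\Zl^0\hs\TT=\TT(\R)\onto \Ho^0\hs\TT$; inverting this yields $\Ho^0\hs\TT\isoto \pi_0\TT(\R)$.

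Composing the two isomorphisms gives the desired chain
\[\Ho^0\hs\X_*(\TT)\isoto \Ho^0\hs\TT\isoto \pi_0\TT(\R).\]
The only thing left to verify is that the composite is the one induced by the evaluation map \eqref{e:e-X*-T}, which is clear from the explicit description $[\nu]\mapsto [\nu(-1)]$ and the fact that $\nu(-1)\in\TT(\R)$ whenever $\nu\in\Zl^0\hs\X_*(\TT)$ (indeed, for such $\nu$ we have $\upgam\nu=\nu$, hence $\upgam(\nu(-1))=\nu(\upgam(-1))=\nu(-1)$).

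There is no real obstacle here: the substance of the statement lies in Theorem \ref{p:X*} (which has already been established by three independent methods) and in Proposition \ref{p:H0-T} (which is a straightforward consequence of the surjectivity and openness of the norm-type map $t\mapsto t\cdot\hm\upgam t$ on $\TT(\C)$). The corollary is merely their formal concatenation, so the proof will be a one-line citation of these two results together with the observation about the image of the evaluation map landing in $\TT(\R)$.
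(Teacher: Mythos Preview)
Your proposal is correct and matches the paper's intended approach exactly: the corollary is simply the concatenation of Theorem \ref{p:X*} (case $k=0$) with the inverse of the isomorphism from Proposition \ref{p:H0-T}, which is why the paper gives no separate proof. Your additional remark that $\nu(-1)\in\TT(\R)$ for $\nu\in\Zl^0\hs\X_*(\TT)$ is a helpful sanity check but not strictly necessary.
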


\subsec{}
Let $\TT_{1}\labelto{\partial}\TT_0$ be a short complex of $\R$-tori.
Consider the short complex of $\Gamma$-modules $\X_*(\TT_{1})\labelto{\partial_*}\X_*(\TT_0)$.
Formula \eqref{e:e-X*-T} permits us to define the evaluation morphism
of short complexes of $\Gamma$-modules
\begin{equation}\label{e-quasi-tori}
\ev\colon\,\big(\hs\X_*(\TT_{1})\to\X_*(\TT_0)\hs\big)\,\lra\, \big(\hs\TT_{1}(\C)\to\TT_0(\C)\hs\big),
     \quad\  (\nu_1,\nu_0)\mapsto\big(\nu_1(-1),\nu_0(-1)\big)
\end{equation}
which in general is not a quasi-isomorphism.

\begin{proposition}\label{p:quasi-tori}
The morphism of short complexes \eqref{e-quasi-tori}
induces isomorphisms on hyper\-cohomology
\[\ev_*^k\colon\,\H^k\big(\hs\X_*(\TT_{1})\hm\to\hm\hm\X_*(\TT_0)\hs\big)\,
     \longisoto\, \H^k(\TT_{1}\hm\to\hm\hm\TT_0),\quad\ \
      [\nu_1,\nu_0]\mapsto\big[\nu_1(-1),\nu_0(-1)\big].\]
\end{proposition}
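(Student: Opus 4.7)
The plan is to reduce the statement to Theorem~\ref{p:X*} via the hyper\-cohomology long exact sequence and the five lemma. The morphism of short complexes \eqref{e-quasi-tori} is by definition compatible with the canonical inclusions of the trivial complex $0\to\X_*(\TT_0)$ and projection onto $\X_*(\TT_1)\to 0$, and likewise with $0\to\TT_0(\C)$ and $\TT_1(\C)\to 0$. Consequently, for each $k\in\Z$, the exact sequence \eqref{e:coho-hyper} applied to both short complexes, together with the componentwise evaluation maps, fits into a commutative ladder
\[
\xymatrix@C=5truemm{
\Ho^k\X_*(\TT_1)\ar[r]\ar[d]^{\ev_*^k} & \Ho^k\X_*(\TT_0)\ar[r]\ar[d]^{\ev_*^k}
 & \H^k(\X_*(\TT_1)\hm\to\hm\X_*(\TT_0))\ar[r]\ar[d]^{\ev_*^k}
 & \Ho^{k+1}\X_*(\TT_1)\ar[r]\ar[d]^{\ev_*^{k+1}}
 & \Ho^{k+1}\X_*(\TT_0)\ar[d]^{\ev_*^{k+1}}\\
\Ho^k\TT_1\ar[r] & \Ho^k\TT_0\ar[r]
 & \H^k(\TT_1\to\TT_0)\ar[r]
 & \Ho^{k+1}\TT_1\ar[r] & \Ho^{k+1}\TT_0
}
\]
with exact rows.

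Next, by Theorem~\ref{p:X*} the four outer vertical arrows of this diagram are isomorphisms for every $k\in\Z$ and for $i=0,1$. The five lemma then yields that the middle vertical arrow $\ev_*^k$ is an isomorphism as well.

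Finally, the explicit formula $[\nu_1,\nu_0]\mapsto[\nu_1(-1),\nu_0(-1)]$ for a representative $(\nu_1,\nu_0)\in\Zl^k(\X_*(\TT_1)\to\X_*(\TT_0))$ is immediate from the definition \eqref{e-quasi-tori} of the evaluation morphism of complexes. No step here looks genuinely delicate: the routine verification is that \eqref{e-quasi-tori} really is a morphism of short complexes of $\Gamma$-modules (so that the ladder commutes), and that the connecting maps $\mu_*^k$ and $\lambda_*^k$ of \eqref{e:coho-hyper} are natural in the short complex, which is a direct check from the formulas in Lemma~\ref{l:lambda-mu}. The only ``substantive'' input is Theorem~\ref{p:X*}; everything else is diagram chasing.
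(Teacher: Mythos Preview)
Your proof is correct and follows essentially the same approach as the paper: both apply the long exact sequence \eqref{e:coho-hyper} to the two short complexes, obtain a commutative ladder whose four outer vertical arrows are isomorphisms by Theorem~\ref{p:X*}, and conclude by the five lemma.
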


\begin{proof}
Using the sequence \eqref{e:coho-hyper} for the short complexes
$\big(\X_*(\TT_1)\to\X_*(\TT_0)\big)$ and $(\TT_1\to\TT_0)$,
we obtain an exact commutative diagram
\[
\xymatrix@C=6.5truemm{
\Ho^k\X_*(\TT_{1})\ar[r]\ar[d]_-{\cong} &\Ho^k\X_*(\TT_{0})\ar[r]\ar[d]_-{\cong}
     &\H^k\big(\hs\X_* (\TT_{1})\hm\hm\to\hm\hm\hm\X_* (\TT_{0})\big)\ar[r]\ar[d]^-{\ev_*^k}
     &\Ho^{k+1}\X_* (\TT_{1})\ar[r]\ar[d]^-{\cong}  &\Ho^{k+1}\X_*(\TT_{0})\ar[d]^-{\cong}\\
\Ho^k\TT_{1}\ar[r]                     &\Ho^k\TT_{0}\ar[r]
     &\H^k( \TT_{1}\hm\to\hm\hm \TT_{0})\ar[r]
     &\Ho^{k+1} \TT_{1}\ar[r]                    &\Ho^{k+1}\TT_{0}
}
\]
in which by Theorem \ref{p:X*} the four vertical arrows
labeled with $\scong$ are isomorphisms.
By the five lemma, the fifth vertical arrow
in the diagram (labelled $\ev_*^k$) is an isomorphism as well.
\end{proof}

\subsec{} Following Gorbatsevich, Onishchik, and Vinberg \cite[Section 3.3.2]{GOV},
we say that a {\em quasi-torus} over $\R$
is a commutative algebraic $\R$-group $\AA$ such that all elements of $\AA(\C)$ are semisimple.
In other words, $\AA$ is an $\R$-group of multiplicative type;
see Milne \cite[Corollary 12.21]{Milne-AG}.
In other words, $\AA$ is an $\R$-subgroup of some $\R$-torus $\TT$;
see, for instance, \cite[Section 2.2]{BGR}.
Set $\TT'=\TT/\AA$; then $\AA$ is the kernel of a surjective homomorphism of tori $\TT\to\TT'$.
The following theorem computes the Galois cohomology
of a quasi-torus $\AA$ in terms of the lattices $\X_*(\TT)$ and $\X_*(\TT')$.

\begin{theorem}\label{t:quasi}
Let $\AA$ be an $\R$-quasi-torus, the kernel
of a surjective homomorphism of $\R$-tori $j \colon \TT\to\TT'$.
Then there are  canonical isomorphisms
\[ \vt^{k}\colon\hs\H^k\big(\X_*(\TT)\to\X_*(\TT')\big)\,\isoto\,\Ho^{k+1}\hm\AA.\]
\end{theorem}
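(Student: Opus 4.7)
The plan is to obtain $\vartheta^k$ as the composition of two canonical isomorphisms, each furnished by earlier results in the paper, with the short exact sequence of $\R$-tori $1\to\AA\to\TT\labelto{j}\TT'\to 1$ (equivalently, the short complex $\TT\labelto{j}\TT'$) as the bridge between the two.

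First I would apply Proposition \ref{p:quasi-tori} to the short complex of $\R$-tori $\TT\labelto{j}\TT'$, obtaining the canonical isomorphism
\[\ev_*^k\colon\ \H^k\bigl(\X_*(\TT)\to\X_*(\TT')\bigr)\ \longisoto\ \H^k\bigl(\TT(\C)\labelto{j}\TT'(\C)\bigr),\qquad [\nu_1,\nu_0]\mapsto\bigl[\nu_1(-1),\nu_0(-1)\bigr].\]
This moves the problem from the cocharacter side (where the differential $j_*\colon\X_*(\TT)\to\X_*(\TT')$ is in general \emph{not} surjective, only of finite cokernel when $\AA$ is finite) to the side of $\Gamma$-modules $\TT(\C),\TT'(\C)$, where $j$ \emph{is} surjective with kernel $\AA(\C)$.

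Second, since $j\colon\TT(\C)\to\TT'(\C)$ is a surjective homomorphism of $\Gamma$-modules with kernel $\AA(\C)$, I would invoke Example \ref{ex:HH(inj-sur)}, which (via Example \ref{Ex:inj-surj}(2) and Proposition \ref{l:quasi}) supplies the canonical isomorphism
\[\H^k\bigl(\TT(\C)\to\TT'(\C)\bigr)\ \cong\ \H^k\bigl(\AA(\C)\to 0\bigr)\ \cong\ \Ho^{k+1}\hm\AA.\]
The desired isomorphism $\vartheta^k$ is then defined as the composition of the two displayed isomorphisms.

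I do not expect any substantive obstacle here: the whole argument is a formal assembly of the hypercohomology package of Section \ref{s:hyper} with the torus-versus-cocharacters dictionary of Section \ref{s:Tori-quasi-tori}. The one point that merits a sentence in the write-up is the canonicity claim: if $\AA$ is presented as the kernel of a different surjection $j'\colon\TT_1\to\TT_1'$, then a morphism of short exact sequences of $\R$-tori induces a morphism of the corresponding complexes $\X_*(\TT)\to\X_*(\TT')$, and functoriality of both $\ev_*^k$ and of the isomorphism in Example \ref{ex:HH(inj-sur)} shows that the composite $\vartheta^k$ depends only on $\AA$, not on the chosen embedding into a torus.
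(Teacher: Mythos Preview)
Your proof is correct and follows essentially the same route as the paper: apply Proposition~\ref{p:quasi-tori} to pass from cocharacter lattices to tori, then use the quasi-isomorphism $(\AA\to 1)\to(\TT\to\TT')$ coming from the surjectivity of $j$ (the paper names this map $i_\#$ and invokes Proposition~\ref{l:quasi} directly, which is exactly what underlies your Example~\ref{ex:HH(inj-sur)}). Your closing paragraph on independence of the embedding is not needed here, since the statement fixes the presentation $j\colon\TT\to\TT'$ as part of the data, and in any case two such presentations need not be connected by a morphism of short exact sequences.
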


\begin{proof}
We have a short exact sequence
\begin{equation*}
1\to\AA\labelto{i}\TT\labelto{j}\TT'\to 1,
\end{equation*}
whence we obtain a quasi-isomorphism
\[i_\#\colon\hs (\AA\to 1)\lra(\TT\to\TT'),\]
an induced isomorphism
\[i_\#^k\colon \Ho^{k+1}\AA\isoto \H^k(\TT\to\TT'),\]
and a composite  isomorphism
\begin{equation}\label{e:vt}
\xymatrix@C=15truemm{
\vt^{k}\colon\hs\H^k\big(\X_*(\TT)\to\X_*(\TT')\big) \ar[r]^-{\ev_*^k} & \H^k(\TT\to\TT')
     \ar[r]^-{(i_\#^k)^{-1}} & \Ho^{k+1}\hm\AA,
}
\end{equation}
as required.
\end{proof}

\subsec{}\label{ss:quasi}
We specify the map \eqref{e:vt}.
Let
\[(\nu,\nu')\in \Zl^k\big(\X_*(\TT)\to\X_*(\TT')\big)\]
be a $k$-hypercocycle.
We consider its image
$\big(\nu(-1),\nu'(-1)\big)\in  \Zl^k(\TT\to\TT')$.
Since the homomorphism $j$ is surjective, we can lift $\nu'(-1)\in T'$
to some element $t\in T$.
Then
\[\big(\nu(-1),\nu'(-1)\big)\hs\sim\hs \big(\nu(-1)\cdot d^k(t)^{-1},\hs 1\big).\]
Thus
\[\vt^{k}[\nu,\nu']=\big[\nu(-1)\cdot d^k(t)^{-1}\big]
      =\big[\nu(-1)\cdot\hm\upgam t^{-1}\cdot t^{(-1)^k}\hs\big]\in \Ho^{k+1}\AA.\]

\begin{lemma}\label{l:A-X(T)-X(T')}
The following exact diagram, in which the arrows in the top and middle rows
are from \eqref{e:coho-hyper},
and the arrows in the bottom row are from \eqref{e:Ga-ABC}, is commutative:
\begin{equation}\label{e:A-X(T)-X(T')}
\begin{aligned}
\xymatrix@C=13truemm{
\Ho^k\hs\X_*(\TT')\ar[r]\ar[d]_-\cong  &\H^k\big(\hs\X_* (\TT)\hm\hm\to\hm\hm\X_* (\TT')\big)\ar[r]\ar[d]^-{\ev_*^k}_-\cong
                                                                     &\Ho^{k+1}\hs\X_* (\TT)\ar[d]^-\cong\\
\Ho^k\hs\TT'\ar[r]\ar@{=}[d]           &\H^k( \TT\hm\to\hm \TT')\ar[r]
                                                                     &\Ho^{k+1}\hs \TT\ar@{=}[d]\\
\Ho^k\hs\TT'\ar[r]                     &\Ho^{k+1}\hm\AA\ar[r]\ar[u]_-{i_\#^k}^-\cong
                                                                     &\Ho^{k+1}\hs \TT
}
\end{aligned}
\end{equation}
\end{lemma}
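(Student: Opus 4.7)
The plan is to split diagram \eqref{e:A-X(T)-X(T')} into two rectangles -- the one formed by its first and second rows, and the one formed by its second and third rows -- and verify commutativity of each separately. The key observation is that both rectangles will follow from naturality-type results already established in the preceding sections, so no explicit cocycle manipulation should be needed.

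For the top rectangle, I note that both horizontal rows are instances of the exact sequence \eqref{e:coho-hyper}, applied respectively to the short complexes $\X_*(\TT)\to\X_*(\TT')$ and $\TT(\C)\to\TT'(\C)$, while the three vertical arrows are all induced by the evaluation morphism of short complexes \eqref{e-quasi-tori}. Since \eqref{e:coho-hyper} was derived by applying the functorial exact sequence \eqref{e:exact-complexes} to the short exact sequence of complexes $0\to(0\to A_0)\to(A_1\to A_0)\to(A_1\to 0)\to 0$, itself natural in $(A_1\to A_0)$, the entire sequence \eqref{e:coho-hyper} is functorial in the short complex. Applying this functoriality to the morphism \eqref{e-quasi-tori} will yield commutativity of the top rectangle; in fact this is precisely the top portion of the large diagram already appearing in the proof of Proposition \ref{p:quasi-tori}.

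For the bottom rectangle, the approach is to invoke Lemma \ref{l:coho-hyper} for the short exact sequence of $\Gamma$-modules $1\to\AA(\C)\to\TT(\C)\to\TT'(\C)\to 1$. Setting $A=\AA(\C)$, $B=\TT(\C)$, and $C=\TT'(\C)$ in that lemma, the commutativity of \eqref{e:coho-hyper-2} furnishes the two identities $\lambda_*^k=i_\#^k\circ\delta^k$ and $i_*^{k+1}=\mu_*^k\circ i_\#^k$, which are exactly the commutativity relations needed for the left and right squares, respectively, of the bottom rectangle of \eqref{e:A-X(T)-X(T')}. The only subtlety is that the vertical middle arrow $i_\#^k$ is drawn upward in \eqref{e:A-X(T)-X(T')} and downward in \eqref{e:coho-hyper-2}, but since $i_\#^k$ is an isomorphism this makes no difference.

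I do not anticipate a main obstacle: the proof is an essentially formal assembly of two pieces -- functoriality of the long exact sequence \eqref{e:coho-hyper} and the content of Lemma \ref{l:coho-hyper} -- applied to the appropriate short complex and short exact sequence. The most delicate point is simply keeping track of the directions of all arrows and confirming that the instances of $\lambda_*^k$, $\mu_*^k$, $\delta^k$, and $i_\#^k$ match up consistently under the chosen identifications, which is routine.
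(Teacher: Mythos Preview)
Your proposal is correct and follows essentially the same approach as the paper: the paper's proof simply states that the top half is ``clearly commutative'' (which you correctly unpack as functoriality of \eqref{e:coho-hyper} under the evaluation morphism) and that the bottom half is commutative by Lemma~\ref{l:coho-hyper}. Your elaboration of the bookkeeping is accurate and adds nothing beyond what the paper's two-line proof intends.
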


\begin{proof}
The top half of the diagram is clearly commutative,
and the bottom half is commutative by Lemma \ref{l:coho-hyper}.
\end{proof}

\section{Nonabelian cohomology for abstract $\Gamma$-groups}
\label{s:Nonab-abstract}

\subsec{} Let $A$ be a $\Ga$-group (written multiplicatively),
that is, a group (not necessarily abelian) endowed with an action of $\Ga$.
We consider the first cohomology $\Ho^1(\Ga,A)$.
We write $\Ho^1\hm A$ for $\Ho^1(\Ga, A)$. Recall that
\[ \Ho^1\hm A=\Zl^1\hm A/\!\sim,\quad\ \text{where}\ \quad
    \Zl^1\hm A=\big\{a\in A \ \big|\  a\cdot\hm\upgam a=1\big\},\]
and two 1-cocycles (elements of $\Zl^1\hm A$) $a_1,\,a_2$ are equivalent  or {\em cohomologous}
(we write ${a_1\sim a_2}$) if there exists $a'\in A$ such that
\[a_2=a'\cdot a\cdot (\upgam a')^{-1}.\]
If $a\in \Zl^1\hm A$, we write $[a]\in \Ho^1\hm A$ for the cohomology class of $a$.
The set $\Ho^1\hm A$ has a canonical {\em neutral element} $[1]$,
the class of the cocycle $1\in \Zl^1\hm A$.
The correspondence $A\rightsquigarrow \Ho^1\hm A$ is a functor
from the category of $\Gamma$-groups to the category of pointed sets.

If the group $A$ is abelian, then
\[\Ho^1\hm A=\Zl^1\hm A/\Bd^1\hm A,\]
where the abelian subgroups $\Zl^1\hm A$ and $\Bd^1\hm A$
were defined as in Subsection \ref{ss:H1-abelian}.
Thus $\Ho^1\hm A$ is naturally an abelian group in this case.

\begin{construction}
Let
\begin{equation}\label{e:ABC}
 1\to A\labelto{i} B\labelto{j} C\to 1
 \end{equation}
be a short exact sequence of  $\Gamma$-groups.
Then we have a cohomology exact sequence
\[
1\to A^\Ga\labelto{i} \Bd^\Ga\labelto{j} C^\Ga\labelto{\delta} \Ho^1\hm A
     \labelto{i_*} \Ho^1 B\labelto{j_*} \Ho^1 C;
\]
see Serre \cite[I.5.5, Proposition 38]{Serre}.
We recall the definition of the map $\delta$
in our case of the group $\Gamma$ of order 2.

Let $c\in C^\Gamma$. We lift $c$ to an element $b\in B$
and set $a=b^{-1}\cdot\hm\upgam b\in B$.
It is easy to check that in fact $a\in\Zl^1\hm A\subseteq A$.
We set $\delta(c)=[a]\in \Ho^1\hm A$.
\end{construction}

\begin{construction}
Assume that in \eqref{e:ABC} the subgroup $A$ is {\em central} in $B$.
Then we have a cohomology exact sequence
\[
1\to A^\Ga\labelto{i} \Bd^\Ga\labelto{j} C^\Ga\labelto{\delta} \Ho^1\hm A\labelto{i_*}
       \Ho^1 B\labelto{j_*} \Ho^1 C \labelto{\delta^1} \Ho^2 A;
\]
see Serre \cite[I.5.7, Proposition 43]{Serre}.
We recall the definition of the map $\delta^1$ in our case of the group $\Gamma$ of order 2.

Let  $c\in \Zl^1\hs C\subseteq C$; then $c\cdot\hm\upgam c=1$.
We lift $c$ to some element $b\in B$; then $b\cdot\hm\upgam\hs b\in A$.
We set $a=b\cdot\hm\upgam\hs b$.
We have $\upgam a=\upgam\hs b\cdot b$.
Since $\upgam a\in A$ and $A$ is central in $B$, we have
\[\upgam a=\upgam\hs b^{-1}\cdot\hm\upgam a\cdot\hm\upgam\hs b=
        \upgam\hs b^{-1}\cdot\hm\upgam\hs b\cdot b\cdot\hm\upgam\hs b=b\cdot\hm\upgam\hs b=a.\]
Thus  $a\in \Zl^2A$, and we set
$\delta^1[c]=[a]=[b\cdot\hm\upgam\hs b]\in \Ho^2\hm A$.
\end{construction}

Note that when the groups $A$, $B$, and $C$ are abelian,
we have $\delta(c)=\delta^0[c]$ for $c\in C^\Gamma=\Zl^0\hs C$,
where $\delta^0\colon \HoT^0\hs C\to \Ho^1\hm A$
is the homomorphism of Section \ref{ss:ABC-abelian}.
Moreover, then our map $\delta^1$ coincides with the homomorphism $\delta^1$
of Section \ref{ss:ABC-abelian}.

\section{Nonabelian Galois cohomology of real algebraic groups}
\label{s:Nonab-R}

\begin{notation}
Let $\GG$ be an algebraic $\R$-group, not necessarily abelian. We write
\[\Ho^1\hs\GG=H^1(\R,\GG)\coloneqq \Ho^1\big(\Gamma,\GG(\C)\big).\]

The group $\GG(\C)$ is a complex Lie group and $\GG(\R)$ is a real Lie group.
If $\GG$ is connected in the Zariski topology, then $\GG(\C)$
is connected in the usual Hausdorff topology,
but $\GG(\R)$ is not necessarily connected even if $\GG$ were connected.
Let $\GG(\R)^0\subseteq\GG(\R)$ denote the identity component,
which is clearly normal in $\GG(\R)$ and open  for the Hausdorff topology.
We write
\[ \piR\GG=\pi_0\hs \GG(\R)\coloneqq \GG(\R)/\GG(\R)^0.\]
If $g\in \GG(\R)$, we write $[g]=g\hs \GG(\R)^0\in\piR\GG$ for the class of $g$.
\end{notation}

\subsec{}
Let
\begin{equation*}
 1\to \AA\labelto{i} \BB\labelto{j} \CC\to 1
 \end{equation*}
be a short exact sequence of real algebraic groups (not necessary linear or connected).
Then we have a short exact sequence of $\Gamma$-groups
\[ 1\to \AA(\C)\labelto{i} \BB(\C)\labelto{j} \CC(\C)\to 1,\]
whence a cohomology exact sequence
\begin{equation}\label{e:H-ABC}
1\to \AA(\R)\labelto{i} \BB(\R)\labelto{j} \CC(\R)\labelto{\delta}
      \Ho^1\hm\AA\labelto{i_*^1} \Ho^1\BB\labelto{j_*^1} \Ho^1\CC.
\end{equation}

\begin{proposition}\label{p:pi-0}\
\begin{enumerate}
\renewcommand{\theenumi}{\roman{enumi}}

\item The map $\delta\colon \CC(\R)\to \Ho^1\hm \AA$ induces a map
\[\delta^0\colon\piR \CC\to \Ho^1\hm \AA.\]

\item The following sequence is exact:
\begin{equation}\label{e:pi-0}
\piR\AA\labelto{i_*^0} \piR \BB\labelto{j_*^0}\piR \CC\labelto{\delta^0}
     \Ho^1\hm\AA \labelto{i_*^1} \Ho^1 \BB \labelto{j_*^1} \Ho^1 \CC .
\end{equation}
\end{enumerate}
\end{proposition}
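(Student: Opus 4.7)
The plan is to reduce both assertions to the cohomology exact sequence \eqref{e:H-ABC} together with one analytic input: the homomorphism $j\colon\BB(\R)\to\CC(\R)$ of real Lie groups has open image, and in fact $j(\BB(\R)^0)=\CC(\R)^0$. This I would establish by noting that the $\Gamma$-equivariant short exact sequence of complex Lie algebras $0\to\Lie\AA\to\Lie\BB\to\Lie\CC\to 0$ remains exact after passing to $\Gamma$-fixed points, since a short exact sequence of finite-dimensional complex vector spaces with compatible real structures restricts to a short exact sequence of the underlying real forms. Hence $dj$ is surjective on real Lie algebras at the identity, $j|_{\BB(\R)}$ is a submersion of real Lie groups, and its image is an open subgroup of $\CC(\R)$; being the continuous image of $\BB(\R)^0$, the open subgroup $j(\BB(\R)^0)$ must coincide with the connected component $\CC(\R)^0$.

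For (i), a direct computation from the definition of $\delta$ suffices. Given $b_0\in\BB(\R)$ and $c\in\CC(\R)$, if $b\in\BB(\C)$ lifts $c$, then $b_0 b$ lifts $j(b_0)c$, and using $\upgam b_0=b_0$ one gets $(b_0 b)^{-1}\cdot\hm\upgam(b_0 b)=b^{-1}\cdot\hm\upgam b$; hence $\delta(j(b_0)c)=\delta(c)$. Thus $\delta$ is constant on cosets of $j(\BB(\R))$, which is an open subgroup of $\CC(\R)$ containing $\CC(\R)^0$, and therefore $\delta$ descends to a well-defined map $\delta^0\colon\piR\CC\to\Hon\AA$.

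For (ii), exactness at $\Hon\AA$ and at $\Hon\BB$ is inherited verbatim from \eqref{e:H-ABC}, since those three terms are unchanged. Exactness at $\piR\CC$ is essentially formal: $[c]\in\ker\delta^0$ iff $\delta(c)=[1]$, iff $c\in j(\BB(\R))$ by \eqref{e:H-ABC}, iff $[c]\in\im(j_*^0)$. The only location requiring a genuine argument is $\piR\BB$: the inclusion $\im i_*^0\subseteq\ker j_*^0$ follows from $j\circ i=1$, while for the reverse inclusion, given $b\in\BB(\R)$ with $j(b)\in\CC(\R)^0$, the identity $j(\BB(\R)^0)=\CC(\R)^0$ supplies $b''\in\BB(\R)^0$ with $j(b'')=j(b)$; then $a\coloneqq b(b'')^{-1}$ lies in $\ker j\cap\BB(\R)=\AA(\R)$ and $[b]=[i(a)]=i_*^0([a])$ in $\piR\BB$.

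The only mild obstacle in the whole argument is the open-image statement for $j|_{\BB(\R)}$, which amounts to exactness of the $\Gamma$-invariants functor applied to the Lie algebra sequence; once this is granted, both parts of the proposition reduce to straightforward manipulation of \eqref{e:H-ABC}. I do not anticipate any further subtleties.
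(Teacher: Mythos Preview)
Your proof is correct and follows essentially the same approach as the paper: both hinge on establishing $j(\BB(\R)^0)=\CC(\R)^0$ via surjectivity of $dj$ on real Lie algebras, then reduce everything else to the exact sequence \eqref{e:H-ABC}. The paper's argument at $\piR\CC$ is phrased slightly more generally (it shows that $\delta^0$ has the same value on $[c_1]$ and $[c_2]$ iff they differ by an element of $\im j_*^0$, not just the kernel statement), but this is immaterial for the proposition as stated.
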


\begin{proof}
We show that $j\big(\BB(\R)^0\big)=\CC(\R)^0$.
Indeed, since $\BB(\R)^0$ is connected, we have $j\big(\BB(\R)^0\big)\subseteq \CC(\R)^0$.
On the other hand, since the homomorphism $j\colon \BB(\C)\to \CC(\C)$ is surjective,
we see that the differential
\[ dj\colon \Lie \BB\to \Lie \CC\]
is surjective (over $\C$, and hence over $\R$),
and therefore, the image $j\big(\BB(\R)^0\big)\subseteq \CC(\R)^0$ contains an open neighborhood of $1$.
It follows that $j\big(\BB(\R)^0\big)=\CC(\R)^0$.

We prove (i).
We define the map $\delta^0$ by $\delta^0[c]=\delta(c)$ for $c\in \CC(\R)$.
We show that the map $\delta^0$ is well defined.
Indeed, let $c_1,c_2\in \CC(\R)$, $c_2=c_0\hs c_1$ for some $c_0\in \CC(\R)^0$.
Then $c_0=j(b_0)$ for some $b_0\in \BB(\R)^0\subseteq \BB(\R)$,
and hence $\delta(c_1)=\delta(c_2)$,
as required.

We prove (ii).
We show that the sequence \eqref{e:pi-0} is exact at $\piR \CC$.
Indeed, let $[c_1],[c_2]\in \piR \CC$, $c_1,c_2\in \CC(\R)$.
Assume that $\delta^0[c_1]=\delta^0[c_2]$.
Then $\delta(c_1)=\delta(c_2)$, and hence, $c_2=j(b) c_1$ for some $b\in \BB(\R)$.
Thus $[c_2]=j_*^0[b]\cdot[c_1]$, which shows
that  the sequence \eqref{e:pi-0} is exact at $\piR \CC$.
Moreover, the map $\delta^0$ induces a map
\[\piR \CC\hs/\hs j_*^0(\piR \BB)\,\to\, \Ho^1\hm\AA.\]

We show that the sequence \eqref{e:pi-0} is exact at $\piR \BB$.
Let $[b]\in\piR \BB$, where $b\in \BB(\R)$.
Assume that $j_*^0[b]=1$.
Then $j(b)\in \CC(\R)^0$.
It follows that $j(b)=j(b_0)$ for some $b_0\in \BB(\R)^0$.
Then $b=i(a)\cdot b_0$ for some $a\in \AA(\R)$, and hence
$[b]=i_*^0[a]\cdot [b_0]=i_*^0[a]$, as required.

The exactness of  \eqref{e:pi-0} at $\Ho^1\hm\AA$ and at $\Ho^1\hs\BB$
follows from the exactness of \eqref{e:H-ABC}.
\end{proof}

\section{Taking quotient by the unipotent radical}
\label{s:uni}

\subsec{}
Let $\UU$ be a unipotent $\R$-group.
We may  assume that $\UU$ is embedded into $\GL_{n,\R}$ for some natural number $n$.
We consider the exponential and logarithm maps
\begin{align*}
&\exp\colon \ul\to U,\quad x\,\mapsto\, 1+x+x^2/2!+x^3/3!+\cdots \ \,\text{for $x\in \ul$;} \\
&\log\colon U\to \ul,\quad u\,\mapsto\, y-y^2/2+y^3/3-\cdots\ \
 \ \text{for $u=1+y\in U$.}
\end{align*}
Since $\UU$ is unipotent, these two maps are regular (polynomial).
They are mutually inverse and defined over $\R$.

\begin{lemma}[well-known]
\label{l:uni}
Let $\UU$ be a unipotent $\R$-group. Then:
\begin{enumerate}
\item[\rm(i)] $\piR\UU=\{1\}$;
\item[\rm(ii)] $\Ho^1\hs\UU=\{1\}$.
\end{enumerate}
\end{lemma}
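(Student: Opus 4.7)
The plan is to establish (i) directly from the exponential/logarithm $\R$-isomorphism, and to prove (ii) by induction on $\dim\UU$ using a central extension, ultimately reducing the cohomological vanishing to Corollary~\ref{c:2-inv}.

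For (i), the regular maps $\exp\colon\ul\to U$ and $\log\colon U\to\ul$ displayed just above the lemma are mutually inverse and defined over $\R$, so they restrict to an $\R$-isomorphism of affine varieties $\uul(\R)\isoto\UU(\R)$. Since $\uul(\R)$ is a finite-dimensional real vector space, it is connected in the Hausdorff topology, hence so is $\UU(\R)$; this gives $\piR\UU=\{1\}$.

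For (ii), I would argue by induction on $\dim\UU$. If $\UU$ is abelian (in particular in the base case $\dim\UU\leq 1$), then $\ul$ is an abelian Lie algebra, so the Baker--Campbell--Hausdorff formula degenerates to $\exp(x)\exp(y)=\exp(x+y)$, making $\exp$ a $\Gamma$-equivariant group isomorphism $(\ul,+)\isoto U$. Thus $\UU(\C)$ is isomorphic as a $\Gamma$-module to the $\C$-vector space $\ul$, on which multiplication by $2$ is invertible; Corollary~\ref{c:2-inv} then yields $\Ho^1\hs\UU=\{1\}$. If $\UU$ is nonabelian, take $\ZZ\coloneqq Z(\UU)$, a closed normal $\R$-subgroup of $\UU$; since $\ul$ is a nonzero nilpotent Lie algebra in characteristic $0$, its center is nonzero, so $\dim\ZZ\geq 1$ and $\dim(\UU/\ZZ)<\dim\UU$. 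The nonabelian cohomology exact sequence \eqref{e:H-ABC} attached to $1\to\ZZ\to\UU\to\UU/\ZZ\to 1$ contains
\[\Ho^1\hs\ZZ\,\to\,\Ho^1\hs\UU\,\to\,\Ho^1\hs(\UU/\ZZ),\]
which is exact as a sequence of pointed sets. The outer terms vanish by the abelian case applied to $\ZZ$ and by the inductive hypothesis applied to $\UU/\ZZ$, respectively, which forces $\Ho^1\hs\UU=\{1\}$.

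The one substantive verification required is that the center of a nontrivial smooth unipotent $\R$-group in characteristic $0$ is a nontrivial closed normal $\R$-subgroup of strictly smaller dimension; this follows from standard nilpotent Lie algebra theory combined with the fact that the algebraic center of a linear algebraic $\R$-group is defined over $\R$. No serious obstacle is anticipated, since the $\R$-exponential and Corollary~\ref{c:2-inv} are already in place.
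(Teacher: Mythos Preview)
Your proof of (i) is essentially identical to the paper's.

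For (ii), your argument is correct but proceeds differently. You run the classical induction on $\dim\UU$ via the central series, reducing to the abelian case and invoking Corollary~\ref{c:2-inv}; this is precisely the Serre argument that the paper cites but deliberately bypasses. The paper instead gives a one-step constructive proof valid for arbitrary (possibly nonabelian) $\UU$: given $z\in\Zl^1\hs\UU$, it sets $u=\exp\bigl(-\tfrac12\log z\bigr)$ and checks directly that $u^{-1}\cdot{}^\gamma u=z$, exhibiting an explicit coboundary. Your route has the advantage of working over any perfect field and of reusing machinery already in the paper; the paper's route is shorter, avoids induction entirely, and produces an explicit trivialization, which fits the paper's emphasis on explicit cocycles.
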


\begin{proof}
(i) The exponential map $\exp\colon \ul\to U$
is an isomorphism of real algebraic varieties,
and hence it induces an isomorphism of real analytic varieties $\uul(\R)\longisoto\UU(\R)$
and an isomorphism of their component groups $\{1\}=\pi_0\, \uul(\R)\longisoto\piR\UU$.

\noindent
(ii) See Serre \cite[Section III.2.1]{Serre} for a proof using induction on $\dim \UU$
(over an arbitrary perfect field).
Here we give a constructive proof over $\R$ in one step.
Let $z\in \Zl^1\hs\UU$.
We have $z\cdot\hm\upgam z=1$, whence $\upgam z=z^{-1}$.
Set $u=\exp(-\half\log z)$; then $\upgam u = u^{-1}$ and $u^2=z^{-1}$.
We have $u^{-1}\cdot\hm \upgam u=u^{-2}=z$, whence $z\sim 1$ and $[z]=[1]$, as required.
\end{proof}

\subsec{}
Let $\GG$ be a linear $\R$-group, not necessarily connected.
Let $\GG^\uu$ denote the  unipotent radical of $\GG$,
that is, the largest  normal unipotent  subgroup of $\GG$.
We say that $\GG$ is {\em reductive} if $\GG^\uu=\{1\}$.
For any linear algebraic $\R$-group $\GG$, we set $\GG^\red=\GG/\GG^\uu$,
which is a reductive $\R$-group (not necessarily connected).

\begin{theorem}[Mostow]
\label{t:Mostow}
Let $\GG$ be a linear algebraic $\R$-group, not necessarily connected.
Then the short exact sequence
\[1\to\GG^\uu\lra \GG\labelto{r}\GG^\red\to 1\]
splits, that is, there exists a homomorphism of $\R$-groups (a splitting)
$s\colon \GG^\red\to \GG$ such that $r\circ s=\id_{\GG^\red}.$
\end{theorem}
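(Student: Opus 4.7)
The plan is to prove the splitting by induction on the derived length of the unipotent radical $\UU\coloneqq\GG^\uu$, reducing to the case where $\UU$ is commutative, and then deducing existence of a splitting from a cohomological vanishing argument. The central input will be the fact that in characteristic zero every reductive $\R$-group is linearly reductive, so that rational representations of $\GG^\red$ are semisimple.

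For the inductive step, I would suppose the derived length $n$ of the chain $\UU\supseteq\UU'\supseteq\cdots\supseteq\UU^{(n)}=\{1\}$ is at least two, so that $\UU^{(n-1)}$ is a nontrivial characteristic $\R$-subgroup of $\GG$. Then $\GG/\UU^{(n-1)}$ is a linear $\R$-group whose unipotent radical $\UU/\UU^{(n-1)}$ has strictly smaller derived length and whose reductive quotient is canonically $\GG^\red$. By the inductive hypothesis there exists an $\R$-splitting $s_1\colon\GG^\red\to\GG/\UU^{(n-1)}$. Pulling $s_1$ back along the projection $\GG\to\GG/\UU^{(n-1)}$ yields an $\R$-subgroup $\HH\subseteq\GG$ whose own unipotent radical is the abelian group $\UU^{(n-1)}$ and whose reductive quotient is $\GG^\red$. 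A splitting of $\HH\to\GG^\red$ then produces the desired splitting of $r\colon\GG\to\GG^\red$, reducing everything to the base case where $\UU$ is commutative.

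In the base case, the exponential map identifies $\UU$ $\R$-isomorphically with its Lie algebra $\uul$ viewed as a vector group, and conjugation by $\GG$ descends to turn $\uul$ into a rational $\R$-representation of $\GG^\red$. Algebraic $\R$-group extensions of $\GG^\red$ by the $\GG^\red$-module $\uul$ are classified by the rational Hochschild cohomology group $\Ho^2_{\rm alg}(\GG^\red,\uul)$, so it suffices to show this group vanishes. Linear reductivity of $\GG^\red$ over $\R$ is obtained by passing to $\C$, using complete reducibility of rational representations of the connected reductive complex group $(\GG^\red)^0_\C$, and averaging projectors to descend to $\R$; the extension to disconnected $\GG^\red$ follows from the Hochschild–Serre spectral sequence for the finite extension $(\GG^\red)^0\subseteq\GG^\red$ combined with the vanishing of group cohomology of the finite group $\pi_0(\GG^\red)$ acting on a $\Q$-vector space. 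This gives $\Ho^i_{\rm alg}(\GG^\red,V)=0$ for every $i\ge 1$ and every rational $\R$-representation $V$, hence the required vanishing and the splitting.

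The main technical obstacle is to carry the vanishing argument out genuinely over $\R$, keeping the constructed splitting $\R$-rational. The most delicate step is verifying that Hochschild $2$-cocycles classify $\R$-algebraic extensions (not merely $\C$-algebraic ones with Galois descent data added afterwards), and that the coboundary producing the trivialization is itself defined over $\R$; this amounts to checking that the averaging of projectors and the Hochschild–Serre manipulations preserve $\R$-rationality, which they do because all representations in sight are finite-dimensional and all group actions are rational over $\R$. Once this framework is in place, both the inductive reduction and the base-case vanishing proceed cleanly, and concatenating the splittings produced at each stage of the induction yields the desired $s\colon\GG^\red\to\GG$.
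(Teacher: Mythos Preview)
The paper does not actually prove this theorem: it simply cites Mostow's original paper, Hochschild's textbook, and a MathOverflow post for proofs valid over any field of characteristic zero, and moves on. So there is no argument in the paper to compare yours against.

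That said, your plan is the standard cohomological proof and is essentially what one finds in Hochschild's book. The induction on the derived length of $\UU$ to reduce to the abelian case is correct (the pullback $\HH=p^{-1}(s_1(\GG^\red))$ along $p\colon\GG\to\GG/\UU^{(n-1)}$ does sit in an extension of $\GG^\red$ by the abelian unipotent group $\UU^{(n-1)}$, and an $\R$-splitting of $\HH\to\GG^\red$ composes with the inclusion $\HH\hookrightarrow\GG$ to give the desired $s$). In the base case, the identification of algebraic $\R$-group extensions of $\GG^\red$ by a vector-group module with Hochschild $2$-cocycles is legitimate because any $\UU$-torsor over the affine scheme $\GG^\red$ is trivial (filter $\UU$ by copies of $\mathbb{G}_a$), so a scheme-theoretic section exists and the group law is encoded by a rational $2$-cocycle. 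The vanishing $\Ho^i_{\rm alg}(\GG^\red,V)=0$ for $i\ge1$ is exactly linear reductivity in characteristic zero; your handling of the disconnected case via Hochschild--Serre for $(\GG^\red)^0\lhd\GG^\red$ and the vanishing of finite-group cohomology with $\Q$-vector-space coefficients is the right bookkeeping. One small simplification: you do not really need to pass to $\C$ and descend projectors---semisimplicity of a finite-dimensional representation over $\R$ is equivalent to semisimplicity after base change to $\C$, so linear reductivity over $\R$ follows immediately from linear reductivity over $\C$.
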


\begin{proof}
See Mostow \cite[Theorem 7.1]{Mostow},
or Hochschild \cite[Section VIII.4, Theorem~4.3]{Hochschild},
or \cite{grp}, for proofs over any field of characteristic 0.
\end{proof}

\begin{theorem}[well-known]
\label{t:uni}
Let $\GG$ be a linear $\R$-group, not necessarily connected.
Consider the canonical surjective homomorphism
$r\colon\hs \GG\to \GG^\red$.  Then:
\begin{enumerate}
\item[\rm(i)] The induced map $r_*^1\colon\hs\Ho^1\GG\to \Ho^1\hs\GG^\red$ is bijective.
\item[\rm(ii)] The induced homomorphism $r_*^0\colon\hs\piR\GG\to\piR\hs\GG^\red$ is an isomorphism.
\end{enumerate}
\end{theorem}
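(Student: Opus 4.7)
The plan is to apply Proposition \ref{p:pi-0} to the short exact sequence
\[1\to\GG^\uu\to\GG\labelto{r}\GG^\red\to 1,\]
combined with Lemma \ref{l:uni}, which gives $\piR\GG^\uu=\{1\}$ and $\Ho^1\hs\GG^\uu=\{1\}$, and with Mostow's Theorem \ref{t:Mostow}, which supplies an $\R$-group splitting $s\colon\GG^\red\to\GG$ satisfying $r\circ s=\id$.

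For part (ii), I would read off the relevant portion of the exact sequence of Proposition \ref{p:pi-0}:
\[\piR\GG^\uu\lra\piR\GG\labelto{r_*^0}\piR\GG^\red\labelto{\delta^0}\Ho^1\hs\GG^\uu.\]
Since both outer terms are trivial, exactness at $\piR\GG$ gives $\ker r_*^0=\{1\}$, and exactness at $\piR\GG^\red$ forces $\delta^0$ to be trivial, so $r_*^0$ is surjective. As $r_*^0$ is a homomorphism of abelian groups, it is a group isomorphism.

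For part (i), the same exact sequence continues as
\[\Ho^1\hs\GG^\uu\labelto{}\Ho^1\hs\GG\labelto{r_*^1}\Ho^1\hs\GG^\red,\]
and exactness immediately gives that the preimage of the neutral class under $r_*^1$ is the neutral class. Surjectivity of $r_*^1$ follows from Mostow's splitting: the induced map $s_*^1\colon\Ho^1\hs\GG^\red\to\Ho^1\hs\GG$ satisfies $r_*^1\circ s_*^1=\id$, because $s$ is $\Gamma$-equivariant and therefore carries cocycles to cocycles.

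The main obstacle is full injectivity of $r_*^1$: since $\Ho^1\hs\GG$ is only a pointed set for nonabelian $\GG$, trivial kernel does not imply injectivity. I would handle this by the standard twisting argument of Serre \cite[Section I.5.3]{Serre}. Given any $z\in\Zl^1\hs\GG$, form the twisted $\R$-group $\GG_z$; its unipotent radical is the twist $(\GG^\uu)_z$, which remains a unipotent $\R$-group, so that $\Ho^1(\GG^\uu)_z=\{1\}$ by Lemma \ref{l:uni} again. Applying the preceding kernel-triviality argument to the short exact sequence for $\GG_z$, I conclude that the kernel of $(r_z)_*^1$ is trivial. Via the canonical bijections $\Ho^1\hs\GG\cong\Ho^1\hs\GG_z$ and $\Ho^1\hs\GG^\red\cong\Ho^1\hs\GG^\red_{r(z)}$ that carry $[z]$ and $[r(z)]$ to the respective neutral classes, this translates into the statement that the fiber of $r_*^1$ over $r_*^1[z]$ consists of the single class $[z]$. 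Combined with surjectivity, this proves that $r_*^1$ is bijective.
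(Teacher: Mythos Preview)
Your argument is correct. For part (i) you follow essentially the paper's route: surjectivity via Mostow's splitting and injectivity via twisting. The paper invokes Serre \cite[I.5.5, Corollary 2 of Proposition 39]{Serre} to identify each fiber of $r_*^1$ with a quotient of $\Ho^1\hs\0{\UU}g$, whereas you rebuild that statement by hand from the exact sequence of the twisted extension; the content is the same.

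For part (ii) your approach genuinely differs from the paper's. The paper uses Mostow's splitting to produce an isomorphism of $\R$-varieties $\UU\times\GG^\red\isoto\GG$, passes to $\R$-points, and reads off $\piR\GG\cong\piR\GG^\red$ from the connectedness of $\UU(\R)$. You instead stay with the exact sequence of Proposition~\ref{p:pi-0} and use only Lemma~\ref{l:uni}; in particular your proof of (ii) does not need Mostow at all. This is cleaner and more uniform with your treatment of (i). One small correction: you write ``a homomorphism of abelian groups,'' but for a possibly disconnected $\GG$ the group $\piR\GG$ need not be abelian. The conclusion still holds, since a bijective group homomorphism is an isomorphism; just drop the word ``abelian.''
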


\begin{proof}
We prove assertion (i). It follows from Sansuc's lemma;
see  \cite[Lemma 1.13]{Sansuc} or \cite[Proposition 3.1]{BDR}.
Here we deduce it from Theorem \ref{t:Mostow}.
Since $r\circ s=\id$, we obtain
\[r_*^1\circ s_*^1=\id\colon\, \Ho^1\hs\GG^\red\to\Ho^1\hs\GG\to\Ho^1\hs\GG^\red,\]
whence we see that the map $r_*^1$ is surjective.

We prove the injectivity.
Write $\UU=\GG^\uu$.
Let $g\in \Zl^1\hs \GG$.
By Serre \cite[I.5.5, Corollary 2 of Proposition 39]{Serre},
the fiber of the map $r_*^1$ over  $r_*^1[g]\in \Ho^1\,\GG^\red$  is in a canonical bijection
with the quotient of the set $\Ho^1\hs\0{\UU}g$ by the group $(\0{\GG^\red}g)(\R)$.
Here the left subscript $g$ denotes the twisting of an $\R$-group structure
by the cocycle $g$; see \cite[Section I.5.3]{Serre}.
Since the $\R$-group $\0{\UU}g$ is unipotent,
by Lemma \ref{l:uni}(ii) the cohomology set  $\Ho^1\hs \0{\UU}g$ is a singleton.
We conclude that the fiber of the map $r_*^1$ over  $r_*^1[g]$ is a singleton.
Thus the map $r_*^1$ is indeed injective.

We prove assertion (ii). We use Theorem \ref{t:Mostow}.
We have an isomorphism of $\R$-varieties
\[\UU\times \GG^\red\to \GG,\quad (u,h)\mapsto u\cdot s(h)\quad\text{for}\ \,u\in U,\, h\in G^\red.\]
Passing to $\R$-points, we obtain an isomorphism of real analytic manifolds
\[\UU(\R)\times \GG^\red(\R)\longisoto \GG(\R),\]
where $\UU(\R)$ is connected by Lemma \ref{l:uni}(i).
We obtain an isomorphism
\[\piR \GG^\red=\piR\UU\times\piR\GG^\red\longisoto \piR\GG,\]
which is clearly inverse to the homomorphism $r^0_*$.
\end{proof}

\begin{remark}\label{r:non-red}
Let $\GG$ be a {\em connected} linear algebraic $\R$-group;
then $\GG^\red$ is a connected reductive group.
In Sections \ref{s:H1} and \ref{s:pi0R} we shall compute $\Ho^1\,\GG^\red$ and $\piR\,\GG^\red$.
By Theorem~\ref{t:uni} this will give us $\Ho^1\hs\GG$ and $\piR\hs\GG$.
\end{remark}

\section{Galois cohomology of a reductive group}
\label{s:H1}

In this section we compute the Galois cohomology
of an arbitrary connected reductive $\R$-group
in transparent combinatorial terms.
We freely use the notation of \cite{BT}.
From now on,  by a semisimple or reductive  algebraic group
we always mean a {\em connected} semisimple or reductive algebraic group,
respectively.
We use the following notation:

\subsec{Notation}\label{ss:Notation-reductive}
\begin{itemize}
\item[\cc] $\GG$ is a connected reductive $\R$-group. In this section $\GG$ is compact (anisotropic).
\item[\cc] $\GG^\sss=[\GG,\GG]$, the commutator subgroup of $\GG$, which is semisimple.
\item[\cc] $\GG^\ssc$ is the universal cover of $\GG^\sss$, which is simply connected.
\item[\cc] $\rho\colon \GG^\ssc\onto\GG^\sss\into\GG$ is the canonical homomorphism.
\item[\cc] $\ZZ=Z(\GG)$, the center of $\GG$.
\item[\cc] $\ZZ^\ssc=Z(\GG^\ssc)=\rho^{-1}(\ZZ)$.
\item[\cc] $\GG^\ad=\GG/\ZZ\cong\GG^\ssc/\ZZ^\ssc$,
            which is a semisimple group of adjoint type.
\item[\cc] $\TT\subset \GG$ is a maximal torus.
\item[\cc] $\TT^\sss=\TT\cap\GG^\sss\subset\GG^\sss$.
\item[\cc] $\TT^\ssc=\rho^{-1}(\TT)\subset \GG^\ssc$.
\item[\cc] $\TT^\ad=\TT/\ZZ\subset\GG^\ad$.
\item[\cc] $\SS=Z(\GG)^0$, the identity component of $\ZZ=Z(\GG)$.
\item[\cc] $\Exp\colon\tl\to T$, $\Exp^\ssc\colon\tl^\ssc\to T^\ssc$,
    $\Exp^\sss\colon\tl^\sss\to T^\sss$, $\Exp^\ad\colon\tl^\ad\to T^\ad$, and $\Exp_S\colon \sll\to S$
    are the scaled exponential maps  given by the formula $x\mapsto \exp 2\pi x$
    (note that we identify $\tl^\ssc=\tl^\sss=\tl^\ad$).
\item[\cc] $A^{(2)}$ denotes the set of elements of order dividing $2$ in a subset $A$ of some group.

\end{itemize}

\subsec{}
As in \cite{BT}, let $\GG=(G,\sigma_c)$ be
a \emph{compact} connected reductive $\R$-group,
with the action of $\gamma$ on $G$ given by an anti-regular involutive automorphism $\sigma_c$. Let
$\TT\subset\GG$ be a maximal torus,
and $B\subset G$ be a Borel subgroup containing $T$.
Let $\Bm=\BRD(G,T,B)$ denote the based root datum of $(G,T,B)$;
see Springer  \cite[Sections 1 and 2]{Springer-Corvallis}.
Recall that
\[\BRD(G,T,B)=(X,X^\vee,\Rm,\Rm^\vee, \Sm,\Sm^\vee)\]
where
\begin{itemize}
\item[\cc] $X=\X^*(T)$ is the character group of $T$, and $X^\vee=\X_*(T)$ is the cocharacter group;
\item[\cc] $\Rm=\Rm(G,T)\subset X$ is the root system, and $\Rm^\vee=\Rm^\vee(G,T)\subset X^\vee$ is the coroot system;
\item[\cc] $\Sm=\Sm(G,T,B)\subset \Rm$ is the system of simple roots,
          and $\Sm^\vee=\Sm^\vee(G,T,B)\subset \Rm^\vee$ is the system of simple coroots with respect to $B$.
\end{itemize}
Let $\tau$ be an involutive automorphism (maybe identity) of $(\GG,\TT,B)$
coming from an automorphism of $\Bm$.
Let $\TT_0$ denote the identity component of the fixed point subgroup $\TT^\tau$,
and  $\theta\in\Aut(\GG,\TT,B)$ be an involutive automorphism
of the form $\theta=\inn(t_\theta)\circ\tau$ with $t_\theta\in T_0$ and $t_\theta^2\in Z$.
Our aim is to compute $H^1(\R,\0\GG\theta)$,
where $\0\GG\theta$ is the corresponding twisted $\R$-group with real structure
(the action of $\gamma$ on $G$)  given by $\sigma=\theta\circ\sigma_c$.

\subsec{}
Recall that $S$ is the connected center and $G^\der$
is the derived subgroup of $G$.
We have a decomposition into an almost direct product $G=G^\der\cdot S$.
There is a chain of isogenies
$$
G^\ssc\times S \longrightarrow G \longrightarrow G^\ad \times \overline{S},
$$
where $G^\ssc$ is the simply connected cover of $G^\der$, $G^\ad$
is the adjoint group of $G$,
and $\overline{S}=S/(S\cap G^\der)$.
Note that $T^\sss=T\cap G^\der$ is a maximal torus in $G^\der$, its preimage $T^\ssc$ in $G^\ssc$
is a maximal torus in $G^\ssc$,
and its image $T^\ad$ in $G^\ad$ is a maximal torus in $G^\ad$.
There is a chain of isogenies
$$
T^\ssc \times S \longrightarrow T=T^\sss\cdot S \longrightarrow T^\ad \times \overline{S}.
$$
The respective inclusion of character lattices reads as
$$
P\oplus\Lambda \supseteq X \supseteq Q\oplus M,
$$
where $X=\X^*(T)$, $\Lambda=\X^*(S)$, and  $M=\X^*(\ov S)$.
As usual, we denote by $P=\X^*(T^\ssc)$ the weight lattice
and by $Q=\X^*(T^\ad)$ the root lattice of the root system $R$.
Note that $P\supseteq Q$ and $\Lambda\supseteq M$.
\begin{lemma}\label{l:M-X-Lambda}
$M=X\cap\Lambda$.
\end{lemma}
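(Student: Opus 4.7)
My plan is to unwind what the embeddings $M \hookrightarrow X \hookrightarrow P \oplus \Lambda$ mean concretely, and then just check both inclusions.

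First, I would recall the origin of each embedding. The map $X \hookrightarrow P \oplus \Lambda$ is the pullback of characters along the isogeny $T^\ssc \times S \to T$: a character $\chi \in X = \X^*(T)$ is sent to the pair $\bigl(\chi \circ \rho|_{T^\ssc},\, \chi|_S\bigr) \in P \oplus \Lambda$. Under this identification, $\Lambda = \{0\} \oplus \Lambda$ corresponds to characters whose first component vanishes, i.e. $\chi \in X \cap \Lambda$ iff $\chi$ pulled back to $T^\ssc$ is trivial. Since $\rho\colon T^\ssc \to T^\sss$ is surjective on $\C$-points, this is equivalent to $\chi|_{T^\sss} = 1$.

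Next, I would identify $M$ inside $X$. Because $G = G^\der \cdot S$ is an almost direct product, so is $T = T^\sss \cdot S$, and hence the natural map induces an isomorphism
\[
T/T^\sss \;\longisoto\; S/(S \cap T^\sss) \;=\; S/(S \cap G^\der) \;=\; \overline{S},
\]
using $T^\sss = T \cap G^\der$. Therefore $M = \X^*(\overline{S}) = \X^*(T/T^\sss)$ embeds into $X$ precisely as the subgroup of characters of $T$ that are trivial on $T^\sss$.

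Combining these two identifications, both $M$ (viewed in $X$) and $X \cap \Lambda$ (viewed in $P \oplus \Lambda$) coincide with $\{\chi \in X : \chi|_{T^\sss} = 1\}$, and they match under the embedding $X \hookrightarrow P \oplus \Lambda$. I would finish by checking directly that an element of $M$, regarded in $P \oplus \Lambda$, has zero first component (trivially so, since it is trivial on $T^\sss$, hence on $T^\ssc$) and the expected second component, the restriction of the character to $S$.

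I do not expect any serious obstacle; the only point to be careful about is bookkeeping the three different ambient groups ($P \oplus \Lambda$, $X$, and $\overline{S}$-characters) and the compatibility of restriction with the isogenies, in particular the use of surjectivity of $T^\ssc \to T^\sss$ on $\C$-points to pass between ``trivial on $T^\ssc$'' and ``trivial on $T^\sss$''.
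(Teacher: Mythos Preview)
Your proposal is correct and follows essentially the same approach as the paper: identify $M$ with the characters of $T$ trivial on $T^\sss$ via $\overline{S}\cong T/T^\sss$, identify $X\cap\Lambda$ with the characters of $T$ trivial on $T^\ssc$, and match them using surjectivity of $T^\ssc\to T^\sss$. The paper's proof is just a more compressed version of exactly this argument.
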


\begin{proof}
We have $\ov S=T/T^\sss$,
and therefore $M=\X^*(\ov S)$ consists of the characters of $T$
that become trivial (identically 1) when restricted to $T^\sss$.
If we regard the characters of $T$ as characters of $T^\ssc\times S$,
then $M$ consists of the characters of $T$ that are trivial on $T^\ssc$,
that is, are contained in the direct summand
$\Lambda$ of $P\oplus \Lambda$, as required.
\end{proof}

\subsec{}
The respective inclusion of cocharacter lattices reads as
$$
Q^\vee\oplus\Lambda^\vee \subseteq X^\vee \subseteq P^\vee\oplus M^\vee,
$$
where  $X^\vee=\X_*(T)$, $\Lambda^\vee=\X_*(S)$, and  $M^\vee=\X_*(\ov S)$.
Then $X^\vee$ is dual to $X$, $\Lambda^\vee$
is dual to $\Lambda$, and $M^\vee$ is dual to $M$.
As usual, we denote by $Q^\vee=\X_*(T^\ssc)$ and $P^\vee=\X_*(T^\ad)$
the coroot and coweight lattice, respectively,
so that the lattice $P^\vee$ is dual to $Q$, and the lattice $Q^\vee$ is dual to $P$.
Note $Q^\vee\subseteq P^\vee$ and $\Lambda^\vee\subseteq M^\vee$.

\begin{lemma}\label{l:ABLMY}
Let $A, B, L, M, Y$ be lattices (finitely generated free abelian groups) such that
\[A\supseteq B,\quad\ L\supseteq M,\quad\ A\oplus L\supseteq Y\supseteq B\oplus M.\]
Assume that $[L:M]<\infty$.
Then $Y\cap L=M$ if and only if the natural map of the dual lattices $Y^\vee\to M^\vee$
induced by the inclusion $M\into Y$ is surjective.
\end{lemma}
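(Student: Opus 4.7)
The plan is to identify $Y\cap L$ with the saturation of $M$ inside $Y$, and then translate saturation into a freeness/torsion-freeness statement which in turn dualizes to surjectivity of the restriction map.

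First I would use the projection $\pi_A\colon A\oplus L\to A$. Since $L=\ker\pi_A$, we have
\[
Y\cap L=\ker\bigl(\pi_A|_Y\colon Y\to A\bigr),
\]
and $\pi_A(Y)$ is a subgroup of the lattice $A$, hence is itself free. In particular $Y/(Y\cap L)\cong \pi_A(Y)$ is torsion-free. I always have $M\subseteq Y\cap L$ because $M\subseteq L$ and $M\subseteq B\oplus M\subseteq Y$.

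Next I would prove the key identity
\[
Y\cap L=\{y\in Y\mid ky\in M\text{ for some }k\geq 1\},
\]
i.e.\ $Y\cap L$ is the saturation of $M$ in $Y$. The inclusion $\supseteq$ uses that any $y\in Y$ with $ky\in M\subseteq L$ satisfies $k\pi_A(y)=0$, and $A$ is torsion-free, so $\pi_A(y)=0$. The inclusion $\subseteq$ is where the hypothesis $[L:M]<\infty$ enters: for any $y\in Y\cap L\subseteq L$, the integer $k=[L:M]$ annihilates $y$ in $L/M$, so $ky\in M$. Consequently $Y\cap L=M$ if and only if $M$ is saturated in $Y$, if and only if $Y/M$ is torsion-free.

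For the final step I would apply the duality functor. From the short exact sequence $0\to M\to Y\to Y/M\to 0$ of finitely generated abelian groups, and the fact that $Y$ is free so $\Ext^1(Y,\Z)=0$, the long exact sequence of $\Hom(-,\Z)$ gives
\[
0\longrightarrow(Y/M)^\vee\longrightarrow Y^\vee\longrightarrow M^\vee\longrightarrow \Ext^1(Y/M,\Z)\longrightarrow 0.
\]
Since $Y/M$ is finitely generated abelian, $\Ext^1(Y/M,\Z)$ is its torsion part, which vanishes precisely when $Y/M$ is torsion-free. Combining with the previous step, $Y^\vee\to M^\vee$ is surjective iff $Y/M$ is torsion-free iff $Y\cap L=M$, proving the lemma.

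The main (very mild) obstacle is the saturation identification, which is the only place the finite-index assumption $[L:M]<\infty$ is used; everything else is standard homological algebra for lattices. The argument does not need either of $A\supseteq B$ or $Y\supseteq B\oplus M$ in an essential way beyond guaranteeing that $M\subseteq Y$.
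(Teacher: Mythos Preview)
Your proof is correct and follows the same logic as the paper: both establish that $Y\cap L=M$ if and only if $Y/M$ is torsion-free (using the projection to $A$ and the finite-index hypothesis $[L:M]<\infty$), and then translate torsion-freeness of $Y/M$ into surjectivity of $Y^\vee\to M^\vee$. The only difference is cosmetic: you invoke the $\Ext$ long exact sequence for the duality step, while the paper argues more directly that $Y/M$ torsion-free makes $M$ a direct summand of $Y$, hence restriction of linear forms is surjective.
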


\begin{proof}
We have $Y\supseteq M$ and  $L\supseteq M$; hence $Y\cap L\supseteq M$.
Moreover,
\begin{equation}\label{e:YLM}
\big[(Y\cap L):M\big]\le[L:M]<\infty.
\end{equation}

If $Y\cap L=M$, then $Y/M=Y/(Y\cap L)$ embeds into $(A\oplus L)/L=A$,
and hence $Y/M$ is torsion free.
It follows that $M$ is a direct summand of $Y$,
and therefore the natural map  $Y^\vee\to M^\vee$ is surjective, as required.

Conversely, if the map $Y^\vee\to M^\vee$ is surjective, then $Y/M$ is torsion free,
and hence $(Y\cap L)/M$ is torsion free.
Now it follows from  \eqref{e:YLM}  that $Y\cap L=M$, as required.
\end{proof}

Since by Lemma \ref{l:M-X-Lambda} we have  $X\cap\Lambda=M$,
by Lemma \ref{l:ABLMY} the lattice $X^\vee$ projects {\em onto}~$M^\vee$.
Since $S$ embeds into $T$, the lattice $X$ projects
{\em onto} $\Lambda$ in the direct sum $P\oplus\Lambda$,
and by Lemma~\ref{l:ABLMY} we have $X^\vee\cap M^\vee=\Lambda^\vee$.

\subsec{}
Consider the almost direct product decomposition $T=T_0\cdot T_1$,
where $\theta$ and $\tau$ act on $T_0$ trivially and on $T_1$ as inversion.
The subtori $T_0$ and $T_1$ of $T=\TT\times_\R\C$ are defined over $\R$,
and we denote the corresponding $\R$-subtori of $\TT$ by $\TT_0$ and $\TT_1$.
We have similar decompositions for $S$, $\overline{S}$, $T^\sss$, $T^\ssc$, and $T^\ad$.

Note that $\0\TT\theta$ is a \emph{fundamental torus} in $\0\GG\theta$, that is, $\0\TT\theta_0=\TT_0$ is a maximal compact torus and $\0\TT\theta$ is the centralizer of $\0\TT\theta_0$ in $\0\GG\theta$. Note also that $\0\TT\theta_1$ is a split $\R$-torus.

We set
\[X_0=\X^*(T_0), \quad \Lambda_0=\X^*(S_0),\quad M_0=\X^*(\ov S_0).\]
Then  $X_0$ is the restriction of $X$ to $T_0$,
$\Lambda_0$ is the restriction of $\Lambda$ to $S_0$,
and $M_0$ is the restriction of $M$ to $\ov S_0$.
Note that $X_0$, $\Lambda_0$, and $M_0$ are identified
with the images of $X$, $\Lambda$, and $M$, respectively,
under the canonical projection $\tl^*\to\tl_0^*$; see \cite[7.3]{BT}.

We also consider the dual lattices $X_0^\vee=\X_*(T_0)$, $\Lambda_0^\vee=\X_*(S_0)$,
and $M_0^\vee=\X_*(\ov S_0)$, which we regard as subgroups of $\tl$.
We have
\[X_0^\vee=(X^\vee)^\tau=\big\{\nu\in X^\vee \ \big|\  \tau(\nu)=\nu\big\},\]
and similarly for $\Lambda_0^\vee$ and $M_0^\vee$;
they are the intersections  with $\tl_0$
of $X^\vee$, $\Lambda^\vee$, and $M^\vee$, respectively.
Finally, let $\wt X_0^\vee$, $\wt\Lambda_0^\vee$, and $\wt M_0^\vee$
denote the images of $X^\vee$, $\Lambda^\vee$, and $M^\vee$, respectively,
under the canonical projection $\tl\to\tl_0,\ \nu\mapsto \big(\nu+\tau(\nu)\big)/2$;
see \cite[7.4]{BT}. Note that
$$
X_0^\vee\subseteq \wt X_0^\vee \subseteq \half X_0^\vee,
$$
and similarly for $\wt\Lambda_0^\vee$, $\wt M_0^\vee$.
There are chains of inclusions:
\begin{gather*}
P_0\oplus\Lambda_0 \supseteq X_0 \supseteq Q_0\oplus M_0, \\
Q_0^\vee\oplus\Lambda_0^\vee \subseteq X_0^\vee \subseteq P_0^\vee\oplus M_0^\vee, \\
\wt Q_0^\vee\oplus\wt\Lambda_0^\vee \subseteq \wt X_0^\vee
            \subseteq \wt P_0^\vee\oplus \wt M_0^\vee.
\end{gather*}

Write
\[_\theta\GG^\ssc=\hs_\theta\GG^{(1)}\times\dots\times \hs_\theta\GG^{(s)},\]
where each $_\theta\GG^{(k)}$ is $\R$-simple. Let
\[\Dtil=\Dtil^{(1)}\sqcup\dots\sqcup\Dtil^{(s)}\]
denote the affine Dynkin diagram of $(\hs_\theta\GG,\hs_\theta\TT,B)$;
see \cite[Section 12]{BT}.
By abuse of notation we write $\beta\in\Dtil$ if $\beta $ is a vertex of $\Dtil$.
The affine Dynkin diagram $\Dtil$ comes
with a family of positive integers $m_\beta$ for $\beta\in\Dtil$;
see \cite[Sections 9, 10, and 11]{BT}.

\begin{definition}[\cite{BT}]
\label{ss:Kac-labeling}
A {\em Kac labeling} of $\Dtil$ is a family of nonnegative integer  numerical labels
$\pp=(p_\beta)_{\beta\in\Dtil}$
at the vertices $\beta$ of $\Dtil$ satisfying
\begin{equation}\label{e:vk-p-0}
  \sum_{\beta\in\Dtil^{(k)}}\!\!\! m_\beta p_\beta=2\ \ \text{for each }k=1,\dots,s.
\end{equation}
We denote the set of Kac labelings of $\Dtil$ by $\Km(\Dtil)$.
\end{definition}

To any $x\in\ttl_0^\sss(\R)$, we assign a family $\pp=\pp(x)=(p_\beta)_{\beta\in \Dtil}$
of \emph{real} numbers $p_\beta=2x_\beta$, where the real numbers $x_\beta$
are the barycentric coordinates of $x$ defined in \cite[Sections 9.3, 10.3, 11.3]{BT}.
This correspondence identifies $\ttl_0^\sss(\R)$ with the subspace of $\R^{\Dtil}$
defined by the equations \eqref{e:vk-p-0}.
We denote the inverse correspondence by $\pp\,\mapsto\, x=x(\pp)$.
Let $\Delta=\Delta^{(1)}\times\dots\times\Delta^{(s)}$
denote the fundamental domain
for the reflection group $\Wtil^\ssc=\wt Q_0^\vee\rtimes W_0$
acting in $\ttl_0(\R)$, where $W_0=\Nm(T_0)/T$
and $\Nm(T_0)$ is the normalizer of $T_0$ in $G$;
see  \cite[Sections 9.3, 10.3, 11.3, and 12.2]{BT} for the description of $\Delta$.
In particular, all $\Delta^{(k)}$ are simplices, and
$x\in\Delta$ if and only if $p_\beta\ge0$ for all $\beta\in\Dtil$.

\subsec{}\label{ss:RG}
Consider the set $\Km(\Dtil)$ of Kac labelings
of the affine Dynkin diagram $\Dtil$.
For any $\pp\in\Km(\Dtil)$
consider the associated point $x(\pp)\in\ihalf P_0^\vee\subset\ttl^\sss_0(\R)$
in the fundamental polyhedron $\Delta\subset\ttl^\der_0(\R)$
of the reflection group $\Wtil^\ssc$;
see \cite[12.7]{BT}.
Consider the scaled exponential maps
$$
\Exp\colon\tl\to T
\quad\text{and}\quad
\Exp^\ad\colon\tl^\ad\to T^\ad,\quad
x\mapsto\exp2\pi x.
$$
Then $\Exp^\ad\big(x(\pp)\big)\in(T_0^\ad)^{(2)}$.
In particular, we may and shall assume that $\inn(t_\theta)=\Exp^\ad\big(x(\qq)\big)$
 and $t_\theta=\Exp\big(x(\qq)\big)$ for some $\qq\in\Km(\Dtil)$; see \cite[12.12]{BT}.
We write
\[\RG(\Bm,\tau,q)=\0\GG\theta\] (the real reductive group
corresponding to $\Bm$, $\tau$, and $q$).
Recall that $\Bm$ is  the based root datum of $(G,T,B)$.

For any  $\mm\in M_0^\vee$,
we write $y(\mm)=\frac\ii2\mm\in\ihalf M_0^\vee\subset\ssl_0(\R)$.
Set
$$
\nu_{\pp,\qq,\mm}=\tfrac2\ii\big(x(\pp)-x(\qq)+y(\mm)\big)\in P_0^\vee\oplus M_0^\vee.
$$

Recall that in \cite[Section 12.7]{BT} we defined a pairing
\[ \langle\,,\rangle_P\colon P_0\times \Km(\Dtil)\to \Q,\quad
            (\lambda,p)\mapsto \langle\lambda,p\rangle_P\coloneqq
            \sum c_\beta p_\beta\ \text{ for }
            \lambda=\sum c_\beta \beta\text{ with }c_\beta\in\Q,\]
where $\beta$ runs over the set of restricted simple roots $\ov\Sm\subset Q_0$.
This pairing induces a well-defined pairing
\[P_0/Q_0\times \Km(\Dtil)\to \Q/\Z.\]
Furthermore, we have a canonical pairing
\[\langle\,,\rangle_\Lambda\colon \Lambda_0\times M_0^\vee\to\C,\quad
       (\lambda, m)\mapsto \langle\lambda,m\rangle_\Lambda
       \  \text{ for }\lambda\in\Lambda_0,\ m\in M^\vee_0,\]
the restriction of the canonical pairing $\sll_0^*\times\sll_0\to\C$.
Since $\Lambda_0\subset M_0\otimes_\Z\Q$, the pairing
$\langle\,,\rangle_\Lambda$ takes values in $\Q$.
If  $\lambda\in M_0$ or $m\in 2\wt\Lambda^\vee_0\subseteq\Lambda_0^\vee$,
then $\langle\lambda,m\rangle_\Lambda\in \Z$.
We see that the pairing $\langle\,,\rangle_\Lambda$ induces a well-defined pairing
\[\Lambda_0/M_0\hs\times  M_0^\vee/2\wt\Lambda_0^\vee\to\Q/\Z.\]

Now if  $\lambda\in X_0\subseteq P_0\oplus\Lambda_0$\hs,
we write $\lambda=\lambda_P+\lambda_\Lambda$ with
$\lambda_P\in P_0,\ \lambda_\Lambda\in\Lambda_0$.

\begin{notation}\label{n:Km-red}
We define the set of {\em reductive Kac labelings}  $\Km(\Dtil,\Lambda,X,\tau,\qq)$
to be the subset of $\Km(\Dtil)\times M_0^\vee/2\wt\Lambda_0^\vee$
consisting of all pairs $\big(\pp,[\mm]\big)$
(with $m\in M_0^\vee$) satisfying
\begin{equation}\label{e:pr=q}
\langle\lambda_P,\pp\rangle_P+\langle\lambda_\Lambda,\mm\rangle_\Lambda\equiv
      \langle\lambda_P,\qq\rangle_P\!\!\pmod{\Z}\quad
      \text{ for all }\ [\lambda]\in X_0/(Q_0\oplus M_0).
\end{equation}
If $\lambda\in Q_0\oplus M_0$, then the congruence \eqref{e:pr=q}
is satisfied for any $\pp,\qq,\mm$,
because $\langle\lambda_P,\pp\rangle_P$,  $\langle\lambda_P,\qq\rangle_P$, and
$\langle\lambda_\Lambda,\mm\rangle_\Lambda$ are integers in this case.
\end{notation}

The finite abelian  group
$$
F_0=\wt X^\vee_0/(\wt Q^\vee_0\oplus\wt\Lambda^\vee_0)\subseteq
\wt P^\vee_0/\wt Q^\vee_0\oplus\wt M^\vee_0/\wt\Lambda^\vee_0
$$
acts diagonally on $\Km(\Dtil)\times M_0^\vee/2\wt\Lambda_0^\vee$,
where the action on $\Km(\Dtil)$ is induced
by the action of $\wt P^\vee_0/\wt Q^\vee_0$ via automorphisms
of the diagram $\Dtil$ described in \cite[\S12]{BT}
and the action on $M_0^\vee/2\wt\Lambda_0^\vee$
is induced by the translation action via the homomorphism
\[\wt M_0^\vee/\wt\Lambda_0^\vee\to M_0^\vee/2\wt\Lambda_0^\vee,
                 \quad  m+\wt\Lambda_0^\vee\,\mapsto\, 2m+2\wt\Lambda_0^\vee
                 \, \in 2\wt M_0^\vee/2\wt\Lambda_0^\vee\subseteq  M_0^\vee/2\wt\Lambda_0^\vee.\]

\begin{theorem}\label{t:H1}
The group $F_0$, when acting on $\Km(\Dtil)\times M_0^\vee/2\wt\Lambda_0^\vee$, preserves
the set of reductive Kac labelings  $\Km(\Dtil,\Lambda,X,\tau,\qq)$.
For $\pp\in\Km(\Dtil),\ m\in M_0^\vee$,
we have $\big(\pp,[\mm]\big)\in\Km(\Dtil,\Lambda,X,\tau,\qq)$
if and only if $\nu_{\pp,\qq,\mm}\in X_0^\vee$.
The map
\begin{gather}
\kappa\colon\,\Km(\Dtil,\Lambda,X,\tau,\qq)\, \longrightarrow\,
          T_0^{(2)} \subset \Zl^1(\R,\0\GG\theta), \notag \\
\big(\pp,[\mm]\big) \mapsto \exp2\pi\big(x(\pp)-x(\qq)+y(\mm)\big)
         =\nu_{\pp,\qq,\mm}(-1) \label{e:mainformula-H1}
\end{gather}
is well defined and induces a bijection
\[\kappa_*\colon\, \Km(\Dtil,\Lambda,X,\tau,\qq)/F_0\hs
          \longisoto\, \Ho^1(\R,\0\GG\theta)\]
between the set of $F_0$-orbits in $\Km(\Dtil,\Lambda,X,\tau,\qq)$
and the first Galois cohomology set $\Ho^1(\R,\0\GG\theta)$.
\end{theorem}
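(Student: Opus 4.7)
The proof splits naturally into a combinatorial verification and a cohomological identification. For the combinatorial part, statement (2) follows from lattice duality: writing $\lambda\in X_0\subseteq P_0\oplus\Lambda_0$ as $\lambda_P+\lambda_\Lambda$, the pairing $\langle\lambda,\nu_{\pp,\qq,\mm}\rangle$ splits as $\langle\lambda_P,\pp\rangle_P-\langle\lambda_P,\qq\rangle_P+\langle\lambda_\Lambda,\mm\rangle_\Lambda$, which is automatically integral on the sublattice $Q_0\oplus M_0$ and reduces to the congruence \eqref{e:pr=q} on the quotient $X_0/(Q_0\oplus M_0)$; integrality on all of $X_0$ is precisely the condition $\nu_{\pp,\qq,\mm}\in X_0^\vee$. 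Well-definedness of the formula for $\kappa$ modulo $2\wt\Lambda_0^\vee$ follows from the chain $2\wt\Lambda_0^\vee\subseteq\Lambda_0^\vee\subseteq X_0^\vee$ together with $\Exp(\ii\ell)=1$ for $\ell\in X_0^\vee$, and $F_0$-invariance of $\Km(\Dtil,\Lambda,X,\tau,\qq)$ is immediate from (2) because any lift $f\in\wt X_0^\vee$ of an element of $F_0$ shifts $\nu_{\pp,\qq,\mm}$ by $2f\in X_0^\vee$ (using $\wt X_0^\vee\subseteq\half X_0^\vee$).

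For the cohomological identification, I would invoke the classical fact (Borel--Serre in the compact case, extended in \cite{Borovoi88}) that every class in $\Ho^1(\R,{}_\theta\GG)$ admits a representative in the fundamental torus ${}_\theta\TT(\R)^{(2)}$. The split factor ${}_\theta\TT_1$ contributes nothing (Corollary \ref{c:indecoposable-tori}), so Theorem \ref{p:X*} yields a surjection $X_0^\vee/2X_0^\vee\twoheadrightarrow\Ho^1(\R,{}_\theta\GG)$ sending $\nu\in X_0^\vee$ to the class of $\nu(-1)$. Decomposing $\nu=\nu_P+\mm$ according to $X_0^\vee\subseteq P_0^\vee\oplus M_0^\vee$, the point $x(\qq)+\tfrac{\ii}{2}\nu_P\in\ihalf P_0^\vee$ can be moved into the fundamental alcove $\Delta$ by an element of $\Wtil^\ssc$, producing a Kac labeling $\pp$ for which $\kappa(\pp,[\mm])$ is cohomologous to $\nu(-1)$. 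Together with (2) this gives surjectivity of $\kappa_*$.

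Injectivity — equivalently, matching the fibers of the surjection $X_0^\vee/2X_0^\vee\twoheadrightarrow\Ho^1(\R,{}_\theta\GG)$ with the $F_0$-orbits on $\Km(\Dtil,\Lambda,X,\tau,\qq)$ — is the main obstacle. I plan to combine three inputs: the semisimple result of \cite{BT} describing $\Ho^1(\R,{}_\theta\GG^\ssc)$ via orbits on $\Km(\Dtil)$ under an action of $\wt P_0^\vee/\wt Q_0^\vee$; the torus identification from Corollary \ref{c:H0-X(T)} for the central part $\SS$; and the nonabelian cohomology sequence \eqref{e:pi-0} applied to the isogeny $1\to\FF\to\GG^\ssc\times\SS\to\GG\to 1$ with finite central kernel $\FF$. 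The group $F_0=\wt X_0^\vee/(\wt Q_0^\vee\oplus\wt\Lambda_0^\vee)$ is precisely the simultaneous stabilizer gluing the semisimple diagram-automorphism action on $\Dtil$ to the translation action on $M_0^\vee/2\wt\Lambda_0^\vee$, and the compatibility of lifts across $\FF$ reduces to an abelian hypercohomology computation accessible through Theorem \ref{t:quasi}. The most delicate point is checking that the sign and twist conventions (factors of $\ii$, the scaled exponential, and $\Gamma$-anti-equivariance of evaluation) line up with the cocycle formula \eqref{e:mainformula-H1}, so that the combinatorial $F_0$-orbit structure coincides on the nose with the fibers of the cohomological map.
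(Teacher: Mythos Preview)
Your combinatorial verifications (the equivalence of the congruence \eqref{e:pr=q} with $\nu_{\pp,\qq,\mm}\in X_0^\vee$, well-definedness of $\kappa$ modulo $2\wt\Lambda_0^\vee$, and $F_0$-invariance of the reductive Kac labelings) are correct and essentially match what the paper does.

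For the bijection $\kappa_*$, however, the paper follows a different and more direct route than your isogeny plan. It does not pass through $\GG^\ssc\times\SS$ at all. Instead it invokes \cite[Prop.\ 5.6]{BT} to identify $\Ho^1(\R,\0\GG\theta)$ with the orbit set $T_0^{(2)}/N_\tau$ for the twisted conjugation action, then uses the shifted exponential and \cite[Lemma 8.13]{BT} to identify this with a subset of $\ttl_0(\R)/\Wtil$, where $\Wtil=\wt X_0^\vee\rtimes W_0$. The group $F_0$ then appears tautologically as the quotient $\Wtil/(\Wtil^\ssc\times\wt\Lambda_0^\vee)$, and the fundamental domain $\Delta\times\ssl_0(\R)/\ii\wt\Lambda_0^\vee$ for the normal subgroup yields the bijection immediately. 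Both injectivity and surjectivity come out of this single orbit identification.

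Your isogeny approach for injectivity has a real gap. The fibers of $\Ho^1(\GG^\ssc\times\SS)\to\Ho^1\GG$ are orbits of $\Ho^1\FF$ acting via the central action, but $\Ho^1\FF$ is not $F_0$: the former is a Tate cohomology group of a finite $\Gamma$-module, while $F_0=\wt X_0^\vee/(\wt Q_0^\vee\oplus\wt\Lambda_0^\vee)$ is a quotient of a half-lattice that depends on $\tau$. Bridging these requires exactly the kind of affine Weyl group analysis you are trying to avoid; the sentence ``reduces to an abelian hypercohomology computation accessible through Theorem \ref{t:quasi}'' does not supply it, since Theorem \ref{t:quasi} only computes $\Ho^k\FF$ and says nothing about how that group acts on the Kac-labeling parametrization of $\Ho^1\GG^\ssc$. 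You would also need to control the image of the map (via $\Ho^2\FF$) and handle the fact that the $\Ho^1\FF$-action is twisted class by class, whereas the $F_0$-action is uniform. The paper's $\Wtil$-orbit method sidesteps all of this.
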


\begin{proof}
By \cite[Prop.\,5.6]{BT}, the inclusion $T_0^{(2)} \subset \Zl^1(\R,\0\GG\theta)$
induces a bijection between $H^1(\R,\0\GG\theta)$
and the orbit set $T_0^{(2)}/N_\tau$ for the group $N_\tau\subset\Nm(T_0)$
acting on $T_0^{(2)}$ by twisted conjugation;
see \cite[Sections 5.1, 5.2, and (4.4)]{BT}.
The twisted conjugation action of $N_\tau$ preserves
the set $\TT_0(\R)$ containing $T_0^{(2)}$; see \cite[Lemma 5.2(iii)]{BT}.

We consider the semidirect product $G\rtimes\langle\tauhat\rangle$,
where $\langle\tauhat\rangle$ is the group of order 1 or 2
acting faithfully on $G$ by conjugation so that $\tauhat$ acts via $\tau$.
The map
\[ \TT_0(\R)\to\TT_0(\R)\cdot\tauhat\subseteq G\cdot\tauhat,
      \quad t\mapsto t\hs t_\theta\cdot\tauhat\]
is  an $N_\tau$-equivariant bijection,
where $N_\tau$ acts on $\TT_0(\R)\cdot\tauhat$ by usual conjugation,
and the subset $T_0^{(2)}$ maps bijectively onto
\[(T_0\cdot\tauhat)^{(2)}_z\coloneqq \big\{g\in T_0\cdot\tauhat\ \big|\ g^2=z \big\},\]
where $z=t_{\theta}^2$; see \cite[Lemma 6.4]{BT}.
The conjugation action of $N_\tau$ on $\TT_0(\R)\cdot\tauhat$ factors
through an effective action of $\What=N_\tau/T^\tau\cong(T_0\cap T_1)\rtimes W_0$,
where $(T_0\cap T_1)$ acts by translations;
see \cite[Sections 8.3, 8.4, and 8.9]{BT}.

Consider the orbit set $\ttl_0(\R)/\Wtil$,
where $\Wtil=\wt X_0^\vee\rtimes W_0$ acts on $\ttl_0(\R)$
by affine isometries in a natural way (see \cite[Section 7.14]{BT}).
By \cite[Lemma 8.13]{BT}, the shifted exponential map
\[\Exphat:\ttl_0(\R)\to\TT_0(\R)\cdot\tauhat,\quad x\mapsto\exp2\pi{x}\cdot\tauhat\]
induces a bijection between the orbit sets  $\ttl_0(\R)/\Wtil$ and
$(\TT_0(\R)\cdot\tauhat)/\What$.

We conclude that the map
\[ \ttl_0(\R)\to\TT_0(\R),\quad x\mapsto tt_{\theta}^{-1},\qquad \text{where}\ \,t=\Exp(x), \]
induces a bijection between the orbit sets $\ttl_0(\R)/\Wtil$ and $\TT_0(\R)/N_\tau$.
Our aim is to identify the subset of $\ttl_0(\R)/\Wtil$
corresponding to $T_0^{(2)}/N_\tau\cong H^1(\R,\0\GG\theta)$
and provide explicit orbit representatives.

Consider the normal subgroup
$$\Wtil^\ssc\times\wt\Lambda^\vee_0=(\wt Q^\vee_0\oplus\wt\Lambda^\vee_0)\rtimes W_0
  \ \subseteq\ \wt X_0^\vee\rtimes W_0=\Wtil\hs$$
and the action of $\Wtil/(\Wtil^\ssc\times\wt\Lambda^\vee_0)\cong F_0$
on $\Delta\times\ssl_0(\R)/\ii\wt\Lambda^\vee_0$, where the group
$F_0\subseteq\ (\wt P_0^\vee/\wt Q_0^\vee)\oplus(\wt M_0^\vee/\wt \Lambda_0^\vee)$
acts on $\Delta$ via the action of $\wt P^\vee_0/\wt Q^\vee_0$
described in \cite[\S12]{BT}, and on $\ssl_0(\R)/\ii\wt\Lambda^\vee_0$
via the translation action of $\ii\wt M^\vee_0/\ii\wt\Lambda^\vee_0$.
The set of orbits of
$\Wtil^\ssc\times\wt\Lambda^\vee_0$ acting on $\ttl_0(\R)$
is identified with $\Delta\times\ssl_0(\R)/\ii\wt\Lambda^\vee_0$,
and the inclusion map
\[\Delta\times\ssl_0(\R)\into \ttl_0(\R)\]
induces a bijection between
the set of orbits of the group
$F_0$ in  $\Delta\times\ssl_0(\R)/\ii\wt\Lambda^\vee_0$
and the  set of orbits of $\Wtil$ in $\ttl_0(\R)$.
We obtain a composite  bijection
\[(\Delta\times \ssl_0(\R)/\ii\wt\Lambda^\vee_0)/F_0\longisoto \ttl_0(\R)/\Wtil
      \longisoto \TT_0(\R)/N_\tau\hs.\]

We see that every $\Wtil$-orbit in $\ttl_0(\R)$ is represented by a vector $x=x'+y$,
where $x'\in\Delta$ and $y\in\ssl_0(\R)$.
The orbit of $x$ corresponds to a cohomology class in $H^1(\R,\0\GG\theta)$
if and only if $tt_{\theta}^{-1}=\Exp\big(x-x(\qq)\big)\in T_0^{(2)}$.
This condition reads as $x-x(\qq)\in\ihalf X_0^\vee$ or, equivalently, as
\begin{equation}\label{e:x=x(q)}
\lambda(x)\equiv\lambda\big(x(\qq)\big)\!\!\!\pmod{\ihalf\Z}\ \ \text{for all }\lambda\in X_0\hs.
\end{equation}

Assume that \eqref{e:x=x(q)} is satisfied.
Since $t_\theta^2\in Z(G)$, we have $\lambda\big(x(q)\big)\in \ihalf\Z$ for all $\lambda\in Q_0$.
We see that for all $\lambda\in Q_0\subseteq X_0$ we have
\begin{equation}\label{e:x-x'-ihalfZ}
\lambda(x')=\lambda(x)\in\ihalf\Z.
\end{equation}
Let $(x_\beta)$ for $\beta\in\Dtil$ denote the barycentric coordinates of $x'$,
and write $p_\beta=2 x_\beta$.
Then from \eqref{e:x-x'-ihalfZ} and the definitions of the barycentric coordinates
in \cite[Sections 9.3, 10.3, and 11.3]{BT} it follows  that all $p_\beta$ are integers.
Since $x\in \Delta$, the numbers $p_\beta$ are nonnegative.
It follows from \cite[Section 12.7]{BT} that the $p_\beta$ satisfy \eqref{e:vk-p-0}.
Thus $p=(p_\beta)\in \Km(\Dtil)$ and $x=x(p)$.
Similarly, we see that for all  $\lambda\in M_0\subseteq X_0$
we have  $\lambda(y)=\lambda(x)\in\ihalf\Z$,
whence $y\in\ihalf M_0^\vee$ and therefore $y=y(\mm)$
for some $\mm\in M_0^\vee$.

Conversely, if $p\in\Km(\Dtil)$, $\mm\in M_0^\vee$,
$x=x(p)+y(\mm)$, and $t=\Exp(x)$,
then $x(p)\in \Delta$ and
\begin{equation*}
\lambda(x)\equiv\lambda\big(x(\qq)\big)\!\!\!\pmod{\ihalf\Z}\quad
\text{for all }\lambda\in Q_0\oplus M_0\hs.
\end{equation*}
Now we see that \eqref{e:x=x(q)} is equivalent to \eqref{e:pr=q},
that is, $tt_{\theta}^{-1}\in T_0^{(2)}$
if and only if $\big(p,[\mm]\big)\in\Km(\Dtil,\Lambda,X,\tau,\qq)$.

The congruences \eqref{e:x=x(q)} can also be written as
$$\lambda\big(x(p)-x(q)+y(\mm)\big)\in\ihalf\Z,\ \text{ that is, }\
    \lambda(\nu_{\pp,\qq,\mm})\in\Z \ \text{ for all }\,\lambda\in X_0\hs.$$
Hence $\big(p,[m]\big)\in\Km(\Dtil,\Lambda,X,\tau,\qq)$ if and only if
$\nu_{\pp,\qq,\mm}\in X_0^\vee$,
and the cocycle in $ T_0^{(2)}\subset\Zl^1\0\GG\theta$ corresponding to $\big(p,[m]\big)$
is
$$
tt_{\theta}^{-1}=\Exp\big(x(\pp)-x(\qq)+y(\mm)\big)=
     \exp\pi\ii\hs\nu_{\pp,\qq,\mm}=\nu_{\pp,\qq,\mm}(-1),
$$
given by formula \eqref{e:mainformula-H1},
where the last equality follows from \cite[(7.2)]{BT}.

We conclude that the Galois cohomology classes in $H^1(\R,\0\GG\theta)$
are represented by the elements $\nu_{\pp,\qq,\mm}(-1)\in T_0^{(2)}$
with $\big(p,[m]\big)\in\Km(\Dtil,\Lambda,X,\tau,\qq)$
defined up to the action of $F_0$ and this correspondence is bijective.
This completes the proof of Theorem \ref{t:H1}.
\end{proof}

\section{Additional structures on Galois cohomology of a reductive group}
\label{s:Additional}

Starting from this section, $\GG$ is a connected reductive $\R$-group, not necessarily compact.

\subsec{}
Let
\[\varphi\colon \GG'\to\GG''\]
be a {\em normal} homomorphism of connected reductive $\R$-groups.
Here ``normal'' means that the image  $\im\varphi$ is normal in $\GG''$.
We write $\HH=\im\varphi$.
We wish to describe the induced map on Galois cohomology.

The normal homomorphism $\varphi$ induces a homomorphism
\[\varphi^\ad\colon\GG^{\prm \ad}\to\GG^{\dprm\ad}\]
with normal image $\HH^\ad$.
Clearly, $\HH^\ad$ is the direct factor of both
$\GG^{\prm \ad}$ and $\GG^{\dprm\ad}$.

Consider the affine Dynkin diagrams $\Dtil'=\Dtil(\GG')$,
$\Dtil''=\Dtil(\GG'')$ and $\Dtil=\Dtil(\HH)$.
Then $\Dtil$ naturally embeds into $\Dtil'$ and into $\Dtil''$.
Let $\qq\in\Km(\Dtil)$ denote a Kac labeling defining
the $\R$-structure of $\HH$ (note that $q$ is defined not uniquely).
Then we may choose Kac labelings $\qq'\in\Km(\Dtil')$
defining the $\R$-structure of $\GG'$ and
$\qq''\in\Km(\Dtil'')$ defining the $\R$-structure of $\GG''$
such that $\qq'|_\Dtil=\qq=\qq''|_\Dtil$.
We may write $\GG'=\RG(\Bm',\tau',q')$,
$\GG''=\RG(\Bm'',\tau'',q'')$, $\HH=\RG(\Bm,\tau,q)$
with the notation of Subsection \ref{ss:RG}.

\begin{proposition}\label{p:H1-funct}
Let $\varphi\colon \GG'\to\GG''$ be a {\emm normal}
homomorphism of reductive $\R$-groups, not necessarily compact,
with image $\HH=\im \varphi$.
Let $\qq\in\Km(\Dtil)$,  $\qq'\in\Km(\Dtil')$, and $\qq''\in\Km(\Dtil'')$ be as above.
Let
\[\big(\pp',[\mm']\big)\in\Km(\Dtil',\Lambda',X',\tau',\qq'),
\ \ \nu_{\pp'\!,\qq'\!,\mm'}(-1)\in (T'_0)^{(2)}\subset \Zl^1\hs\GG',
\ \ \text{and}\ \ \big[\nu_{\pp'\!,\qq'\!,\mm'}(-1)\big]\in \Ho^1\hs\GG' \]
be as in Theorem \ref{t:H1}.
Then
\[\varphi\big(\nu_{\pp'\!,\qq'\!,\mm'}(-1)\big)=\nu_{\pp''\hm,\qq''\hm,\mm''}(-1),\]
and hence
\[\varphi_*\big[\nu_{\pp'\!,\qq'\!,\mm'}(-1)\big]
      =\big[\nu_{\pp''\hm,\qq''\hm,\mm''}(-1)\big]\in\Ho^1\hs\GG'',\]
where $\mm''=\varphi_*(\mm')$ and where $\pp''\in\Km(\Dtil'')$ is such that
\[\pp''|_\Dtil= \pp'|_\Dtil\quad\text{and}\quad \pp''|_{\Dtil''
    \smallsetminus \Dtil}=\qq''|_{\Dtil''\smallsetminus \Dtil}\hs.\]
\end{proposition}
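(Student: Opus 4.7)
The plan is to reduce everything to a Lie-algebra identity via the intertwining property of the scaled exponential maps, and then verify that identity by treating the central and semisimple parts separately.

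First I would recall from the proof of Theorem \ref{t:H1} that
\[\nu_{\pp',\qq',\mm'}(-1) = \Exp'\bigl(x(\pp') - x(\qq') + y(\mm')\bigr),\]
where $\Exp'\colon\tl'\to T'$ is the scaled exponential map of $\GG'$. Since $\varphi$ is a morphism of algebraic groups, $\varphi\circ\Exp' = \Exp''\circ d\varphi$, and it therefore suffices to establish the identity
\[d\varphi\bigl(x(\pp') - x(\qq') + y(\mm')\bigr) = x(\pp'') - x(\qq'') + y(\mm'')\]
in $\ttl''_0(\R)$. The decomposition $\ttl_0 = \ttl^\sss_0 \oplus \ssl_0$ is respected by $d\varphi$, so the two summands can be handled independently.

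For the central contribution, since $\HH$ is normal and connected in $\GG''$, the characteristic subgroup $Z(\HH)^0$ is acted on trivially by the connected ambient group $\GG''$, hence $Z(\HH)^0\subseteq\SS''$. Therefore $\varphi(\SS')\subseteq\SS''$, and $d\varphi$ restricts to a map $\ssl'_0\to\ssl''_0$. Unwinding the definition $y(\mm)=\tfrac{\ii}{2}\mm$ together with $\mm''=\varphi_*(\mm')$ immediately gives $d\varphi(y(\mm'))=y(\mm'')$; checking that $\varphi_*$ sends $M'^\vee_0$ into $M''^\vee_0$ is routine from the $\Gamma$- and $\tau$-equivariance of $\varphi$.

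For the semisimple contribution, I would use two structural facts: first, $\HH^\sss$ is an almost direct product of certain $\R$-simple factors of $\GG''^\sss$—exactly those indexed by components of $\Dtil$ inside $\Dtil''$—so $\HH^\ssc$ is the product of the corresponding universal covers; second, the induced surjection $\GG'^\ssc\onto\HH^\ssc$ factors through the projection of $\GG'^\ssc$ onto its analogous subproduct indexed by $\Dtil\subseteq\Dtil'$ (because each surjection of an $\R$-simple simply connected factor onto an $\R$-simple simply connected image is an isomorphism). Consequently $d\varphi$ kills the $(\Dtil'\setminus\Dtil)$-summand of $\ttl'^\sss_0(\R)$ and identifies its $\Dtil$-summand with the $\Dtil$-summand of $\ttl''^\sss_0(\R)$, preserving barycentric coordinates. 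This yields $d\varphi(x(\pp'))=x(\pp'|_\Dtil)$ and $d\varphi(x(\qq'))=x(\qq|_\Dtil)$, each viewed in the $\Dtil$-summand of $\ttl''^\sss_0(\R)$. On the other side, because $\pp''$ and $\qq''$ coincide off $\Dtil$, the difference $x(\pp'')-x(\qq'')$ vanishes on the $(\Dtil''\setminus\Dtil)$-summand and equals $x(\pp'|_\Dtil)-x(\qq|_\Dtil)$ on the $\Dtil$-summand, matching the left-hand side. Combining the two contributions proves the cocycle equality, and the cohomology equality follows at once.

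The main obstacle is justifying the assertion that $d\varphi$ preserves barycentric coordinates on common components of $\Dtil$. This reduces to showing that the combinatorial data attached to each component of $\Dtil$—its marks $m_\beta$, its distinguished vertex, and the affine reflection structure from \cite[Sections~9.3, 10.3, 11.3]{BT}—are intrinsic to the $\R$-simple factor of $\HH^\ssc$ and depend on neither whether the factor is viewed as a subfactor of $\GG'^\ssc$ nor of $\GG''^\ssc$. Once the construction of $\Dtil$ and the compatibility assumption $\qq'|_\Dtil=\qq=\qq''|_\Dtil$ are carefully unwound, the remaining verifications are routine bookkeeping.
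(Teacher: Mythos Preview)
Your proposal is correct and constitutes exactly the ``straightforward check'' that the paper invokes as its entire proof; the paper gives no further details, and you have supplied them along the only natural route---reducing to a Lie-algebra identity via $\varphi\circ\Exp'=\Exp''\circ d\varphi$ and then verifying it factor-by-factor on the central and semisimple summands.
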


\begin{proof} A straightforward check.\end{proof}

\begin{proposition}\label{p:twisting-ad}
Consider a reductive $\R$-group $\RG(\Bm,\tau,q)$
with the notation of  Subsection~\ref{ss:RG}.
Having fixed $\Bm$ and $\tau$, we shall write $\GG_q=\RG(\Bm,\tau,q)$ for brevity.
We use the notation of Theorem \ref{t:H1}.
Let $q'\in \Km(\Dtil)$, and consider the 1-cocycle
\[ a=\Exp^\ad\big(x(q')-x(q)\big)\in (T_0^\ad)^{(2)}\subset \Zl^1\hs\GG^\ad_q\hs.\]
Consider the twisted group $_a\GG_q$; cf. \cite[Section 14.1]{BT}.
Then there is a canonical isomorphism $\0\GG{a}_q\isoto \GG_{q'}$.
\end{proposition}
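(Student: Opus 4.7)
The plan is to compare the anti-regular involutions defining $\GG_q$ and $\GG_{q'}$ directly on the underlying complex group $G$. Recall from Subsection~\ref{ss:RG} that, having fixed the compact form $\sigma_c$, we have $t_\theta(q) = \Exp\bigl(x(q)\bigr)$ and $t_\theta(q') = \Exp\bigl(x(q')\bigr)$, and the two real structures are
\[\sigma_q = \inn\bigl(t_\theta(q)\bigr)\circ\tau\circ\sigma_c,\qquad \sigma_{q'} = \inn\bigl(t_\theta(q')\bigr)\circ\tau\circ\sigma_c.\]
The strategy is to show that the twisted involution $\sigma_{q,a}$ associated to the cocycle $a\in\Zl^1\hs\GG_q^\ad$ agrees with $\sigma_{q'}$, so that the identity map $G\to G$ provides the desired canonical isomorphism $\0\GG{a}_q \isoto \GG_{q'}$.

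First I would verify that $a=\Exp^\ad\bigl(x(q')-x(q)\bigr)$ is a genuine $1$-cocycle for $\GG_q^\ad$. The vector $x(q')-x(q)$ lies in $\ttl_0(\R)$, hence is fixed by $\tau$; the inner automorphism $\inn(t_\theta(q))$ acts trivially on $T^\ad$; and a direct check using $a^2 = 1$ together with $\sigma_c|_{T^\ad_0}(t) = \bar t^{-1}$ on the compact torus gives $\sigma_c(a)=a$, so that $\sigma_q(a) = a$ and $a\cdot\sigma_q(a) = a^2 = 1$. Once this is in hand, the twisted real structure on $G$ takes the form $\sigma_{q,a}(g) = \tilde a\,\sigma_q(g)\,\tilde a^{-1}$, where $\tilde a\in G$ is an arbitrary lift of $a\in G^\ad$ (the formula does not depend on the lift, since two lifts differ by a central element). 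The natural choice of lift is
\[\tilde a := \Exp\bigl(x(q')-x(q)\bigr)\in T_0,\]
and the central identity in the proof is the trivial one
\[\tilde a\cdot t_\theta(q)=\Exp\bigl(x(q')-x(q)\bigr)\cdot\Exp\bigl(x(q)\bigr)=\Exp\bigl(x(q')\bigr)=t_\theta(q'),\]
which holds because $x(q)$ and $x(q')$ commute in the abelian Lie algebra $\tl_0$.

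Plugging this in, the computation collapses: for any $g\in G$,
\begin{align*}
\sigma_{q,a}(g) &= \tilde a\cdot\inn\bigl(t_\theta(q)\bigr)\bigl(\tau\sigma_c(g)\bigr)\cdot\tilde a^{-1}\\
&=\bigl(\tilde a\,t_\theta(q)\bigr)\cdot\tau\sigma_c(g)\cdot\bigl(\tilde a\,t_\theta(q)\bigr)^{-1}
= \inn\bigl(t_\theta(q')\bigr)\circ\tau\circ\sigma_c(g)=\sigma_{q'}(g),
\end{align*}
so $\sigma_{q,a}=\sigma_{q'}$ and the identity map on $G$ is the required isomorphism of $\R$-groups. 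The only subtle point, which I expect to be the main bookkeeping obstacle rather than a deep issue, is making the $\sigma_c$-action on $T_0^\ad$ match so that $a$ is truly $\sigma_q$-fixed; everything else is a one-line manipulation in $T_0$.
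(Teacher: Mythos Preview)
Your argument is correct and is exactly the computation that the paper has in mind: the paper's own proof consists only of the reference ``Similar to that of \cite[Proposition 14.2]{BT}'', and what you have written is precisely that argument, namely the direct verification that the twisted anti-regular involution $g\mapsto\tilde a\,\sigma_q(g)\,\tilde a^{-1}$ coincides with $\sigma_{q'}$ via the identity $\tilde a\cdot t_\theta(q)=t_\theta(q')$ in $T_0$. The cocycle check is fine as well, since $x(q')-x(q)\in\ttl_0^{\sss}(\R)$ forces $a\in\TT_0^\ad(\R)$ for the compact structure $\sigma_c$, hence $\sigma_c(a)=a$, $\tau(a)=a$, and the inner part is trivial on the torus.
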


\begin{proof} Similar to that of \cite[Proposition 14.2]{BT}. \end{proof}

\subsec{}
Now let $\big(\qq',[m']\big)\in \Km(\Dtil,\Lambda,X,\tau,\qq)$,  that is,
$\qq'\in\Km(\Dtil)$,  $m'\in M_0^\vee$, $[m']\in M_0^\vee/2\wt\Lambda_0^\vee$,
and \eqref{e:pr=q} is satisfied.
With the notation of Theorem \ref{t:H1}, consider the 1-cocycle
\[a=\nu_{\qq'\!,\qq,m'}(-1)=\Exp\bigl(x(\qq')-x(\qq)+y(m')\bigr)
      \in  T_0^{(2)}\subset \Zl^1\hs\GG_\qq\]
and the twisting bijection
$\tw_a\colon \Ho^1\hs\0{\GG_\qq}a\to \Ho^1\hs\GG_\qq$
of Serre \cite[I.5.3, Proposition 35  bis]{Serre}.
By Proposition \ref{p:twisting-ad} we may identify $\0{\GG_\qq}a$ with $\GG_{\qq'}$.
Thus we obtain a map
\begin{equation*}
\tw_a\colon \Ho^1\hs\GG_{\qq'}\to \Ho^1\hs\GG_\qq
\end{equation*}
sending the neutral cohomology class $[1]\in\Ho^1\hs\GG_{\qq'}$  to $[a]\in \Ho^1\hs\GG_\qq$.

\begin{proposition}\label{p:inner-twist}
$$
\tw_a\big[\nu_{\pp'',\qq',m''}(-1)\big]=\big[\nu_{\pp'',\qq,\hs m''+m'}(-1)\big]\qquad
              \text{for all}\ \ \big(\pp'',[m'']\big)\hs\in\hs
              \Km(\Dtil,\Lambda,X,\tau,\qq').
$$
\end{proposition}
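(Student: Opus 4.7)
\medskip

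\noindent\textbf{Proof plan.}
The plan is to reduce everything to a commutative calculation in the torus $\TT_0$ and invoke the explicit form of Serre's twisting bijection. Recall that in Serre \cite[I.5.3]{Serre} the bijection $\tw_a\colon \Ho^1(_a\GG_q)\isoto \Ho^1\GG_q$ is induced by the map $Z^1(_a\GG_q)\to Z^1\GG_q$, $b\mapsto b\cdot a$: a straightforward check shows that if $b\cdot{\gamma\cdot_a b}=1$ for the twisted action $\gamma\cdot_a g=a\cdot{}^\gamma g\cdot a^{-1}$, then $(ba)\cdot{}^\gamma(ba)=1$ for the original action. In our situation both $a=\nu_{\qq',\qq,m'}(-1)$ and $b=\nu_{\pp''\hm,\qq'\hm,m''}(-1)$ lie in the abelian group $T_0(\C)\subseteq G$, so this product is commutative.

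Next I would use Proposition \ref{p:twisting-ad} to identify $_a\GG_q$ with $\GG_{\qq'}$. Because $\Exp\big(y(m')\big)\in S_0\subseteq Z(\GG)$ is central, the image of $a$ in $\GG^\ad_q$ equals $\Exp^\ad\bigl(x(\qq')-x(\qq)\bigr)$, precisely the cocycle appearing in Proposition \ref{p:twisting-ad}. Under this identification the cocycle $\nu_{\pp''\hm,\qq'\hm,m''}(-1)\in Z^1\GG_{\qq'}$ corresponds to the element $\nu_{\pp''\hm,\qq'\hm,m''}(-1)$ of $T_0(\C)$ regarded as a cocycle of $_a\GG_q$. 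Then, using that $\Exp\colon\tl\to T$ is a homomorphism of abelian groups, I compute
\[
a\cdot b\;=\;\Exp\bigl(x(\qq')-x(\qq)+y(m')\bigr)\cdot\Exp\bigl(x(\pp'')-x(\qq')+y(m'')\bigr)
       \;=\;\Exp\bigl(x(\pp'')-x(\qq)+y(m'+m'')\bigr),
\]
which is $\nu_{\pp''\hm,\qq,\hs m''+m'}(-1)$ by formula \eqref{e:mainformula-H1}.

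Finally, I would verify that the pair $\bigl(\pp''\hm,[m''+m']\bigr)$ really lies in $\Km(\Dtil,\Lambda,X,\tau,\qq)$, so that the right-hand side of the claimed identity is a legitimate element of the parametrizing set of Theorem \ref{t:H1}. By that theorem this is equivalent to the assertion $\nu_{\pp''\hm,\qq,\hs m''+m'}\in X_0^\vee$. But the equality $\nu_{\pp''\hm,\qq,\hs m''+m'}=\nu_{\pp''\hm,\qq'\hm,m''}+\nu_{\qq'\hm,\qq,m'}$ is immediate from the definition, and both summands lie in $X_0^\vee$ by hypothesis (the first because $(\pp'',[m''])\in\Km(\Dtil,\Lambda,X,\tau,\qq')$ and the second because $(\qq',[m'])\in\Km(\Dtil,\Lambda,X,\tau,\qq)$).

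The only delicate point is keeping track of the direction of the twisting formula together with the identification $_a\GG_q\cong\GG_{\qq'}$: one must check that Serre's map $b\mapsto b\cdot a$ matches the isomorphism of Proposition \ref{p:twisting-ad} in such a way that the same element of $T_0(\C)$ represents the cocycle on both sides. Once this bookkeeping is settled, the proof reduces to the additive computation above, and the result follows.
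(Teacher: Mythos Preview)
Your proposal is correct and follows essentially the same approach as the paper: both arguments observe that Serre's twisting bijection is induced on cocycles by multiplication by $a$, and then reduce to the additive identity $\nu_{\pp'',\qq,\hs m''+m'}=\nu_{\pp'',\qq',m''}+\nu_{\qq',\qq,m'}$ evaluated at $-1$ inside the abelian torus $T_0$. Your write-up is in fact more careful than the paper's (which omits the verification that $(\pp'',[m''+m'])\in\Km(\Dtil,\Lambda,X,\tau,\qq)$ and the bookkeeping on the identification $_a\GG_q\cong\GG_{q'}$), but the mathematical content is the same.
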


\begin{proof}
Note that $\nu_{\pp'',\qq,\hs m''+m'}=\nu_{\pp'',\qq',m''}+\nu_{\qq',\qq,\hs m'}$.
The map $\tw_a$ is induced by the map on cocycles
\[\Zl^1\hs\GG_{\qq'}\to \Zl^1\hs\GG_\qq,\quad a''\mapsto a''a,\]
sending $\nu_{\pp'',\qq',m''}(-1)$ to
\begin{equation*}
\nu_{\pp'',\qq',m''}(-1)\cdot a= \nu_{\pp'',\qq',m''}(-1)\cdot\nu_{\qq',\qq,\hs m'}(-1)
       = \nu_{\pp'',\qq,\hs m''+m'}(-1).\qedhere
\end{equation*}
\end{proof}

\subsec{}\label{ss:Z-action}
Let $\GG=\RG(\Bm,\tau,q)$
with the notation of Subsection \ref{ss:RG}.
We consider the center $\ZZ=Z(\GG)$ of $\GG$.
The group $\Hon\ZZ$
naturally acts on $\Hon\GG$ by
\begin{equation}\label{e:Z-action}
[z]\cdot [g]=[zg] \ \ \text{for}\ \,z\in\Zl^1\hs\ZZ,
\ \, g\in\Zl^1\hs\GG;
\end{equation}
see Serre \cite[Section I.5.7]{Serre}.
We wish to compute this action in our language.

\begin{lemma}\label{l:cocycle-center}
Let $\zeta\in\Ho^1\hs\ZZ$.
Then $\zeta$ can be represented by a cocycle of the form
\begin{equation}\label{e:nuP-nuM}
z=\Exp^\sss(\ii\nu_P)\cdot \Exp_S(\ii\nu_M/2),
\end{equation}
where $\nu_P\in P^\vee$, $\nu_M\in M_0^\vee$,
and the maps $\Exp^\sss\colon \tl^\sss\to T^\sss$, $\Exp_S\colon\sll\to S$
are the restrictions of $\Exp\colon \tl\to T$,
$x\mapsto \exp 2\pi x$, to $\tl^\sss$ and $\sll$, respectively.
\end{lemma}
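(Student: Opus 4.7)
The plan is to invoke Theorem~\ref{t:quasi} applied to the short exact sequence $1\to\ZZ\to T\to T^\ad\to 1$, giving an isomorphism $\vt^0\colon\H^0(X^\vee\to P^\vee)\isoto\Ho^1\hs\ZZ$. Any $\zeta\in\Ho^1\hs\ZZ$ equals $\vt^0[\mu,\mu']$ for some $(\mu,\mu')\in X^\vee\oplus P^\vee$ satisfying $\upgam\mu=-\mu$ and $\upgam\mu'-\mu'=\partial\mu$, where $\partial$ denotes the composition of the $\Gamma$-stable inclusion $X^\vee\subseteq P^\vee\oplus M^\vee$ (from Section~\ref{s:H1}) with projection on the first summand.

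I will then decompose $\mu=\mu_P+\mu_M$ with $\mu_P\in P^\vee$ and $\mu_M\in M^\vee$. The cocycle condition $\upgam\mu=-\mu$ splits componentwise into $\upgam\mu_P=-\mu_P$ and $\upgam\mu_M=-\mu_M$. Since the real structure is $\sigma=\theta\circ\sigma_c$ and the compact $\sigma_c$ acts as $-1$ on the cocharacter lattice of any $\R$-torus, the Galois action on $M^\vee$ equals $-\tau$, so $\upgam\mu_M=-\mu_M$ is equivalent to $\tau\mu_M=\mu_M$, that is, $\mu_M\in(M^\vee)^\tau=M_0^\vee$.

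For the explicit cocycle, the formula in Subsection~\ref{ss:quasi} gives $\zeta=[\mu(-1)\cdot t\cdot\upgam t^{-1}]$ for any lift $t\in T$ of $\mu'(-1)\in T^\ad$. I will take $t=\Exp^\sss(\ii\mu'/2)\in T^\sss\subseteq T$, a valid lift since $T^\sss\into T\onto T^\ad$ is surjective with kernel $Z(\GG^\sss)$. Using $\upgam(\ii\mu'/2)=-\ii\upgam\mu'/2$ and $\upgam\mu'=\mu'+\mu_P$, a direct computation gives
\[
t\cdot\upgam t^{-1}=\Exp^\sss\bigl(\ii(\mu'+\upgam\mu')/2\bigr)=\Exp^\sss(\ii\mu')\cdot\Exp^\sss(\ii\mu_P/2).
\]
Combined with $\mu(-1)=\Exp^\sss(\ii\mu_P/2)\cdot\Exp_S(\ii\mu_M/2)$ and commutativity in $\ZZ(\C)$, this yields
\[
z=\Exp^\sss\bigl(\ii(\mu_P+\mu')\bigr)\cdot\Exp_S(\ii\mu_M/2),
\]
of the required form with $\nu_P\coloneqq\mu_P+\mu'\in P^\vee$ and $\nu_M\coloneqq\mu_M\in M_0^\vee$. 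The only subtle point is the specific choice of lift $t=\Exp^\sss(\ii\mu'/2)$: it is arranged so that the two factors of $\Exp^\sss(\ii\mu_P/2)$ (one from $\mu(-1)$, one from $t\cdot\upgam t^{-1}$) merge into the integer-exponent $\Exp^\sss(\ii\nu_P)$ demanded by the lemma.
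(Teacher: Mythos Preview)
Your proof is correct, but it takes a different route from the paper's own argument for this lemma. The paper proceeds directly: it writes an arbitrary cocycle as $z=z^\sss\cdot s$ with $z^\sss\in Z(G^\sss)$ and $s\in S$, sets $z^\sss=\Exp^\sss(\ii\nu_P)$ and $s=\Exp_S(y)$, reads off from the cocycle condition that $y+\upgam y=\ii\nu_M\in\ii M_0^\vee$, and then kills the anti-invariant part $y_-$ of $y$ by the coboundary $\Exp_S(y_-/2)\cdot\upgam\Exp_S(y_-/2)^{-1}$, leaving $y=\ii\nu_M/2$. No hypercohomology is invoked.

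Your argument instead pulls $\zeta$ back through the isomorphism $\vt^0\colon\H^0(X^\vee\to P^\vee)\isoto\Ho^1\ZZ$ of Theorem~\ref{t:quasi}, decomposes the hypercocycle according to $X^\vee\subset P^\vee\oplus M^\vee$, and then unwinds the explicit description of $\vt^0$ from Subsection~\ref{ss:quasi} with the well-chosen lift $t=\Exp^\sss(\ii\mu'/2)$. This is in fact precisely the content of the paper's \emph{next} lemma (Lemma~\ref{l:hyper-to-center}), which computes the cocycle attached to a given hypercocycle and arrives at the same shape with $\nu_P=-\nu'$. So what you have done is prove Lemma~\ref{l:cocycle-center} as a corollary of (your version of) Lemma~\ref{l:hyper-to-center}, whereas the paper proves the two statements independently. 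Your approach is slightly less elementary but has the advantage of making the link to the hypercohomology description explicit from the outset; the paper's direct argument is shorter and avoids the bookkeeping of hypercocycles. One small point: your line ``$\upgam(\ii\mu'/2)=-\ii\,\upgam\mu'/2$'' silently uses $\upgam$ in two senses (the $\C$-antilinear real structure on $\tl^\sss$ on the left, the $\Gamma$-action on the lattice $P^\vee$ on the right); this is consistent because $\gamma=-\tau$ on cocharacters, but it would be clearer to say so.
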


\begin{proof}
The class $\zeta$ is represented by a cocycle $z=z^\sss\cdot s$, where $z^\sss\in Z(G^\sss)$, $s\in S$.
Then $z^\sss=\Exp^\sss(\ii\nu_P)$, $s=\Exp_S(y)$, where $\nu_P\in P^\vee$, $y\in\sll$.
The cocycle condition reads as
$$
z\cdot\hm\upgam z=\Exp^\sss\big(\ii\nu_P+\ii\tau(\nu_P)\big)\cdot\Exp_S(y+\upgam y)=1,
$$
which is equivalent to
\[\ii\nu_P+\ii\tau(\nu_P)+y+\upgam y\in\ii X_0^\vee.\]
This implies $y+\upgam y=\ii\nu_M\in\ii M_0^\vee$.
Put $y=y_++y_-$, where $\upgam y_+=y_+$ and $\upgam y_-=-y_-$\hs.
Then
$$
s_-\coloneqq\Exp_S(y_-)=\Exp_S(y_-/2)\cdot\hm\upgam\Exp_S(y_-/2)^{-1}
$$
is a coboundary in $S$. Replacing $z$ with $zs_-^{-1}$ yields $y=y_+=\ii\nu_M/2$.
\end{proof}

\subsec{}
Since $\X_*(\TT)=X^\vee$, $\X_*(\TT^\ad)=P^\vee$, and $\TT/\ZZ=\TT^\ad$,
by Theorem \ref{t:quasi} there is a canonical isomorphism
$$\vt^0\colon\hs\H^0(X^\vee\to P^\vee)\isoto\Ho^1\hs\ZZ.$$

\begin{lemma}\label{l:hyper-to-center}
Let $(\nu,\nu')\in\Zl^0(X^\vee\to P^\vee)$, and let $\zeta=\vt^0[\nu,\nu']\in \Ho^1\hs\ZZ$.
Then  $\zeta$ can be represented by the cocycle $z$ of the form \eqref{e:nuP-nuM},
where $\nu_P=-\nu'\in P^\vee$ and $\nu_M$ is the image of
$\nu\in X_0^\vee\subseteq P_0^\vee\oplus M_0^\vee$ under the projection to $M_0^\vee$.
\end{lemma}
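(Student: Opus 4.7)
The plan is to trace through the explicit formula for $\vt^0$ from Subsection \ref{ss:quasi} with a convenient choice of lift, and then to simplify using the cocycle conditions.

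First I would set up the lift. Viewing $\nu'\in P^\vee=\X_*(T^\ad)$ inside $\tl^\sss=\tl^\ad\subseteq\tl$ through the canonical identification coming from the isogeny $T^\sss\onto T^\ad$, I take
\[t\coloneqq\Exp(-\ii\nu'/2)\in T^\sss\subseteq T.\]
Its image in $T^\ad$ is $\Exp^\ad(-\ii\nu'/2)=\nu'(-1)^{-1}=\nu'(-1)$, using $\nu'(-1)^2=\nu'(1)=1$; so $t$ is a lift of $\nu'(-1)$. By the formula in Subsection \ref{ss:quasi},
\[\vt^0[\nu,\nu']=[z],\qquad z\coloneqq\nu(-1)\cdot\upgam t^{-1}\cdot t\in\Zl^1\ZZ.\]

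Next I would compute $z$ in closed form. Using $\nu(-1)=\Exp(\ii\nu/2)$, the $\Gamma$-equivariance of $\Exp$, and the $\C$-antilinearity of the (twisted) Galois action on $\tl$ (which gives $\upgam(-\ii\nu'/2)=\ii\hs\upgam\nu'/2$), the computation collapses to
\[z=\Exp\!\bigl(\tfrac{\ii}{2}(\nu-\upgam\nu'-\nu')\bigr).\]
The cocycle identities $D^0(\nu,\nu')=0$ read $\upgam\nu=-\nu$ and $\upgam\nu'-\nu'=\partial\nu$, and decomposing $\nu=\partial\nu+\nu_M$ in $X^\vee\subseteq P^\vee\oplus M^\vee$ then yields
\[z=\Exp\!\bigl(\tfrac{\ii}{2}(\nu_M-2\nu')\bigr)=\Exp^\sss(-\ii\nu')\cdot\Exp_S(\ii\nu_M/2),\]
where the last equality uses the $\Gamma$-stable direct sum $\tl=\tl^\sss\oplus\sll$ with $-\ii\nu'\in\tl^\sss$ and $\ii\nu_M/2\in\sll$. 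This is precisely a cocycle of the form \eqref{e:nuP-nuM} with $\nu_P=-\nu'$ and $\nu_M$ the projection of $\nu$ onto the $M^\vee$-summand. Finally, $\upgam\nu=-\nu$ together with the preservation of the decomposition $X^\vee\subseteq P^\vee\oplus M^\vee$ by the (twisted) $\Gamma$-action forces $\nu\in X_0^\vee$, whence $\partial\nu\in P_0^\vee$ and $\nu_M\in M_0^\vee$, as asserted.

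The only delicate point is the bookkeeping of Galois actions: the twisted $\Gamma$-action on $\tl$ is $\C$-antilinear and preserves the splitting $\tl=\tl^\sss\oplus\sll$, and the cocharacter lattices $X^\vee$, $P^\vee$, $M^\vee$ embed $\Gamma$-equivariantly into the relevant Lie algebras compatibly with this splitting. Once these compatibilities are in hand, the simplification above is essentially mechanical.
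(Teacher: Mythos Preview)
Your proof is correct and follows essentially the same route as the paper: choose the lift $t=\Exp^\sss(-\ii\nu'/2)$, apply the explicit formula from Subsection~\ref{ss:quasi}, and simplify using the hypercocycle conditions together with the decomposition $\nu=\partial\nu+\nu_M$ in $P^\vee\oplus M^\vee$. The only cosmetic difference is that the paper expresses the Galois action on cocharacters through the holomorphic involution $\tau$ (via $\upgam=-\tau$ on $X^\vee$), whereas you keep everything in terms of $\upgam$; the two computations are line-by-line equivalent.
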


\begin{proof}
Since $(\nu,\nu')$ is a $0$-hypercocycle, we have
$$D^0(\nu,\nu') =\big(-d^1\nu,d^0\nu'-\partial\nu\big)
    =\big(\tau(\nu)-\nu,-\tau(\nu')-\nu'-\nu+\nu_M\big)=(0,0),$$
where $\nu_M$ is the image of $\nu\in X^\vee\subseteq P^\vee\oplus M^\vee$
under the projection to $M^\vee$.
Hence $\nu\in X_0^\vee$, $\nu_M\in M_0^\vee$, and $\nu-\nu_M=-\nu'-\tau(\nu')\in P_0^\vee$.

As noted in Subsection~\ref{ss:quasi},
$\vt^0[\nu,\nu']$ is represented by a cocycle $z=\nu(-1)\cdot t \cdot\hm\upgam t^{-1}$,
where $t\in T$ and the image of $t$ in $T^\ad$ is
$\nu'(-1)=\Exp^\ad(\ii\nu'/2)=\Exp^\ad(-\ii\nu'/2)$.
We may take $t=\Exp^\sss(-\ii\nu'/2)$; then
$$z=\Exp(\ii\nu/2-\ii\nu'/2+\ii\tau(\nu')/2)
     =\Exp(-\ii\nu'+\ii\nu_M/2)=\Exp^\sss(-\ii\nu')\cdot \Exp_S(\ii\nu_M/2),$$
as required.
\end{proof}

\subsec{}
Let $\GG^\ssc$ be a simply connected semisimple $\R$-group.
We consider the center $\ZZ^\ssc=Z(\GG^\ssc)$ of $\GG^\ssc$.
Set $C=P^\vee/Q^\vee$.
We embed $P^\vee$ and $Q^\vee$ into $\tl^\ssc$.
The scaled exponential map
\[\Exp^\ssc\colon\tl^\ssc\to T^\ssc,\quad x\mapsto\exp 2\pi x \]
has kernel $\ii Q^\vee$ and induces
a $\Gamma$-equivariant isomorphism of abelian groups
\[ (\ii)C=\ii P^\vee\hm/\hs\ii Q^\vee\longisoto Z^\ssc\]
Using Lemma \ref{l:anti}, we obtain an  isomorphism  on cohomology
\begin{equation*}
 \Ho^0\hs C=\Ho^1\,(\ii)C\isoto \Hon\ZZ^\ssc.
\end{equation*}

In \cite[Section 7.14]{BT} we defined the group $C_0=\wt P^\vee_0/\wt Q^\vee_0$.
We have a canonical surjective homomorphism
\begin{equation}\label{e:C-C0}
C\onto C_0\colon\ \nu+ Q^\vee\hs \longmapsto\hs\half\big(\nu+\tau(\nu)\big)+\wt Q^\vee_0.
\end{equation}

\begin{remark}\label{r:F0|C}
There is a similar surjective homomorphism
$F=X^\vee/(Q^\vee\oplus\Lambda^\vee)\onto F_0$.
On the other hand, $F\subseteq C\oplus M^\vee/\Lambda^\vee$
embeds into $C$ under the natural projection.
Indeed, a coset $[\nu]\in F$ represented by $\nu\in X^\vee$ projects to $[0]\in C$
if and only if $\nu\in Q^\vee\oplus M^\vee$;
but then $\nu=\nu_Q+\nu_M$, where $\nu_Q\in Q^\vee$
and $\nu_M\in X^\vee\cap M^\vee=\Lambda^\vee$,
whence $[\nu]=[0]\in F$. We deduce that $\#F_0$ divides $\#F$ and $\#F$ divides $\#C$.
\end{remark}

\subsec{}
In \cite[Sections 9--11]{BT} we described a canonical action of the group $C_0$
on the twisted  affine Dynkin diagram $\Dtil$.
Thus we obtain a canonical homomorphism
\begin{equation}\label{e:vs}
\vs\colon P\to C\to C_0\to \Aut\hs\Dtil.
\end{equation}

Since $\tau=-\gamma$ on $P$, we have
\[\Bd^0\hs C=\big\{c+\upgam c\ \big|\  c\in C\big\}=\big\{c-\tau(c)\ \big|\ c\in C\big\}
     =\big\{\nu-\tau(\nu)+Q^\vee \ \big|\  \nu\in P^\vee\big\}.\]
We see that the canonical surjective map \eqref{e:C-C0}
sends $\Bd^0\hs C\subseteq C$ to $0$ and thus
induces a homomorphism $\Ho^0\hs C\to C_0$.
Thus we obtain a canonical homomorphism
\begin{equation}\label{e:vs-00}
\vs^0\colon \Ho^0\hs C\to C_0\to \Aut\hs\Dtil.
\end{equation}

\begin{proposition}\label{p:Z-action}
For $\GG$ as in \ref{ss:Z-action},
let $\xi\in\Hon\GG$, $\xi=\kappa_*\big[p,[m]\hs\big]$
with the notation of Theorem \ref{t:H1}.
Let $\zeta=[z]\in\Hon Z(\GG)$,
where $z=\Exp^\sss(\ii\nu_P)\cdot\Exp_S(\ii\nu_M/2)$, with
$\nu_P\in P^\vee$, $\nu_M\in M_0^\vee$  as in Lemma \ref{l:cocycle-center}.
Then
\[\zeta\cdot \xi\hs=\hs \kappa_*\big[p',[m']\hs\big],\]
where $m'=\nu_M+m$, $p'=\vs(\nu_P)(p)$,  and $\vs$ is the homomorphism of \eqref{e:vs}.
\end{proposition}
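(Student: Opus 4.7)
The plan is to multiply the representative cocycles for $\zeta$ and $\xi$ directly, and then identify the resulting class with $\kappa_*[p',[m']]$ using the combinatorics from the proof of Theorem~\ref{t:H1}. Since $T$ is abelian and $\Exp$ is a group homomorphism, the identity $y(\nu_M)=\ii\nu_M/2$ gives
\[
z\cdot\nu_{p,q,m}(-1)\;=\;\Exp\!\big(\ii\nu_P+x(p)-x(q)+y(m')\big),\qquad m'\coloneqq\nu_M+m\in M_0^\vee.
\]
This already accounts for the $S$-part of the formula, and the remaining task is to show that $\Exp^\sss(\ii\nu_P+x(p)-x(q))$ is cohomologous in $\GG^\sss$ to $\Exp^\sss(x(p')-x(q))$ with $p'=\vs(\nu_P)(p)$.

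First I would symmetrize $\nu_P$ with respect to $\tau$. Write $\nu_P=\nu_P^++\nu_P^-$ with $\nu_P^\pm=\tfrac12(\nu_P\pm\tau(\nu_P))$. Since $\upgam$ acts as $-\tau$ on cocharacters, a direct computation shows that $\Exp^\sss(\ii\nu_P^-)=\upgam a\cdot a^{-1}$ for $a=\Exp^\sss(-\ii\nu_P^-/2)\in T^\sss$, and hence is a coboundary. Replacing $\nu_P$ by $\nu_P^+\in\wt P_0^\vee$ does not alter the cohomology class, and $\ii\nu_P^+$ now lies in $\ttl_0^\sss(\R)$ (the compact real form of the $\tau$-fixed Cartan). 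Therefore the modified cocycle has the form $\Exp(x-x(q))$ with $x\coloneqq\ii\nu_P^++x(p)+y(m')\in\ttl_0(\R)$, precisely the shape used in the proof of Theorem~\ref{t:H1} to parameterize $T_0^{(2)}$.

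Finally I would invoke the description of the $C_0$-action on $\Dtil$ from \cite[\S\S9--12]{BT}. By its very construction, an element $c_0\in C_0=\wt P_0^\vee/\wt Q_0^\vee$ permutes Kac labelings so that, for any lift $\tilde c_0\in\wt P_0^\vee$, translating $x(p)\in\Delta$ by $\tilde c_0$ and reducing back to the fundamental alcove via $\Wtil^\ssc=\wt Q_0^\vee\rtimes W_0$ yields $x(c_0\cdot p)$. Applying this with $\tilde c_0=\ii\nu_P^+$ (under the identification of $\ii P^\vee$ with the compact translation lattice in $\ttl_0^\sss(\R)$) and with $c_0$ equal to the image of $\nu_P+Q^\vee\in C$ under the projection~\eqref{e:C-C0}, we conclude that $\ii\nu_P^++x(p)$ lies in the same $\Wtil^\ssc$-orbit as $x(p')$, where $p'=\vs(\nu_P)(p)$. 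Combining the two factors, $z\cdot\nu_{p,q,m}(-1)$ is cohomologous to $\Exp(x(p')-x(q)+y(m'))=\nu_{p',q,m'}(-1)=\kappa(p',[m'])$, as required. The delicate point is this last matching between the translation action in $\ttl_0^\sss(\R)$ and the combinatorial $C_0$-action on $\Dtil$, but all the bookkeeping is already carried out in \cite[\S\S7--12]{BT}.
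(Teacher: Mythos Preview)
Your proof is correct and follows essentially the same route as the paper: multiply the cocycles, split $\nu_P$ into its $\tau$-symmetric part $\nu_P^+=\nu_0$ and $\tau$-antisymmetric part $\nu_P^-=\nu_1$, kill the latter with the coboundary coming from $\Exp^\sss(\pm\ii\nu_1/2)$, and then invoke the definition of the $C_0$-action on the fundamental alcove to pass from $\ii\nu_0+x(p)$ to $x(p')$. The paper's proof is simply terser in the last step, writing only ``by the definition of the action of $C_0$ on $\Delta$'' where you spell out the $\Wtil^\ssc$-orbit argument.
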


Proposition \ref{p:Z-action}  computes the action of $\Hon Z(\GG)$ on $\Hon \GG$.

\begin{proof}
The class $\zeta\cdot \xi$ is represented by the cocycle
$$z\cdot\nu_{p,q,m}(-1)=\Exp^\sss\big(\ii\nu_P+x(p)-x(q)\big)\cdot\Exp_S\big(\ii\nu_M/2+y(m)\big).$$
Put $\nu_P=\nu_0+\nu_1$, where
$\nu_0=\half\big(\nu_P+\tau(\nu_P)\big)\in\wt P^\vee_0$ and $\nu_1=\half\big(\nu_P-\tau(\nu_P)\big)$.
Note that $$\upgam\Exp(\ii\nu_1/2)\cdot\Exp(\ii\nu_P)\cdot\Exp(\ii\nu_1/2)^{-1}=\Exp(\ii\nu_0),$$
whence
$$\zeta\cdot \xi=\big[\Exp^\sss\big(\ii\nu_0+x(p)-x(q)\big)\cdot\Exp_S(y(\nu_M+m))\big]
   =\big[\Exp^\sss(x(p')-x(q))\cdot\Exp_S(y(m'))\big]=\big[\nu_{p',q,m'}(-1)\big]$$
by the definition of the action of $C_0$ on
the fundamental domain $\Delta$, as desired.
\end{proof}

\begin{corollary}
\label{c:Stab}
Let $\GG^\ssc$ be a simply connected semisimple $\R$-group with center $\ZZ^\ssc$.
Consider the map
\[\iota_*\colon  \Hon \ZZ^\ssc\to\Hon \GG^\ssc\]
induced by the inclusion map $\iota\colon \ZZ^\ssc\into \GG^\ssc$.
Let
\[\iota_*^0\colon \Ho^0\hs C\isoto \Hon \ZZ^\ssc\lra\Hon \GG^\ssc\]
denote the composite map.
Then
\[\ker \iota_*^0=(\Ho^0\hs C)_q,\]
the stabilizer of $q\in\Km(\Dtil)$ under the action of $\Ho^0\hs C$ on $\Km(\Dtil)$
induced by the action \eqref{e:vs-00} of $\Ho^0\hs C$ on $\Dtil$.
\end{corollary}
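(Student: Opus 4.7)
The plan is to apply Proposition \ref{p:Z-action} with the base class $\xi=[1]\in\Hon\GG^\ssc$, express $\iota_*^0[\nu]$ explicitly via the bijection $\kappa_*$ of Theorem \ref{t:H1}, and then read off the condition for the result to be the neutral class.

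The essential simplification is that for a simply connected semisimple $\GG^\ssc$, its maximal torus has cocharacter lattice $X^\vee=Q^\vee$ and there is no connected center, so $\Lambda=M=0$. Consequently $\wt X_0^\vee=\wt Q_0^\vee$ and $\wt\Lambda_0^\vee=0$, and the group
$$F_0=\wt X_0^\vee/(\wt Q_0^\vee\oplus\wt\Lambda_0^\vee)$$
is trivial. Theorem \ref{t:H1} then furnishes a bijection $\kappa_*\colon\Km(\Dtil,0,P,\tau,\qq)\isoto\Hon\GG^\ssc$ (with no passage to $F_0$-orbits), and the neutral class $[1]$ corresponds exactly to the labeling $\qq$ itself, because $\nu_{\qq,\qq,0}=0$ and hence $\nu_{\qq,\qq,0}(-1)=1$.

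Now, for $[\nu]\in\Ho^0 C$, lift it to $\nu_P\in P^\vee$ as in Lemma \ref{l:cocycle-center}, so that its image in $\Hon\ZZ^\ssc$ is represented by $z=\Exp^\sss(\ii\nu_P)$ (here $\nu_M=0$ since $M_0^\vee=0$). Under the action \eqref{e:Z-action} of $\Hon\ZZ^\ssc$ on $\Hon\GG^\ssc$ we have $\iota_*^0[\nu]=[z]\cdot[1]$, and Proposition \ref{p:Z-action} evaluates this to $\kappa_*\big[\vs(\nu_P)(\qq),\,0\big]$. Since $\kappa_*$ is a bijection (no quotient), this equals $[1]=\kappa_*[\qq,0]$ precisely when $\vs(\nu_P)(\qq)=\qq$. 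As $\vs$ on $P^\vee$ descends to the induced homomorphism $\vs^0\colon\Ho^0 C\to\Aut\Dtil$ of \eqref{e:vs-00}, this is exactly the condition $[\nu]\in(\Ho^0 C)_\qq$, giving $\ker\iota_*^0=(\Ho^0 C)_\qq$.

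The one genuinely subtle point is the vanishing $F_0=0$: without it, the argument would collapse, since $\vs(\nu_P)(\qq)$ a priori lies in the $C_0$-orbit of $\qq$, so if one were dividing by a nontrivial $F_0=C_0$ one would trivially conclude $\iota_*^0=0$, which is false in general (e.g.\ for $\SU(2)$, where $\iota_*^0$ is injective). The rest is a mechanical substitution into Proposition \ref{p:Z-action} together with the definitions of $\vs$ and $\vs^0$.
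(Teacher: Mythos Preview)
Your proof is correct and follows essentially the same approach as the paper: both identify $\iota_*^0[\nu]$ with $[z]\cdot[1]$ and then invoke Proposition~\ref{p:Z-action} to translate the stabilizer-of-$[1]$ condition into the combinatorial condition $\vs(\nu_P)(q)=q$. Your write-up is in fact more explicit than the paper's, which simply says ``Now the corollary follows from Proposition~\ref{p:Z-action}''; in particular, you spell out the vanishing $F_0=0$ in the simply connected case, which is exactly what makes $\kappa_*$ a genuine bijection (no passage to orbits) and allows one to conclude $\vs(\nu_P)(q)=q$ rather than merely an $F_0$-orbit equality.
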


\begin{proof}
It follows from the definition of the action \eqref{e:Z-action}
of $\Hon \ZZ^\ssc$ on $\Hon\GG^\ssc$ that
\[\iota_*[z]=[z]\cdot[1]\in  \Hon \GG^\ssc.\]
Therefore, $\ker \iota_*=(\Hon \ZZ^\ssc)_{[1]}$
and $\ker\iota_*^0=(\Ho^0\hs C)_{[1]}$,
the stabilizers of  the neutral element  $[1]\in\Hon \GG^\ssc$,
where $\Ho^0\hs C$ acts on $\Hon \GG^\ssc$
via the isomorphism $\Ho^0\hs C=\Ho^1\,(\ii)C\isoto \Hon \ZZ^\ssc$.
Now the corollary follows from Proposition \ref{p:Z-action}.
\end{proof}

\subsec{}
Let $\GG$ be a connected reductive $\R$-group (not necessarily compact)
and let $\TT\subseteq \GG$ be a maximal torus.
Let $\TT^\ssc=\rho^{-1}(\TT)\subseteq \GG^\ssc$.
By \cite[Definition 2.2]{Borovoi-Memoir},
the $k$-th {\em abelian Galois cohomology group} is defined by
\[\Ho_\ab^k\hs\GG\coloneqq\H^k(\TT^\ssc\to \TT).\]
It is an abelian group, and it does not depend
on the choice of $\TT$ (up to a canonical isomorphism).
In  \cite[Section 3]{Borovoi-Memoir}, the first-named author
defined the {\em abelianization map}
\[\ab^1\colon \Ho^1\hs\GG\to\Ho_\ab^1\hs\GG.\]
The first abelian Galois cohomology group $\Ho_\ab^1\hs\GG$
and the abelianization map play an important role in the description
of Galois cohomology of reductive groups over number fields;
see \cite[Theorem 5.11]{Borovoi-Memoir}.
Here we compute $\Ho_\ab^k\hs\GG$ and the abelianization map.

\begin{proposition}\label{p:H1ab}
Let $\GG$ be a connected reductive $\R$-group.
Then for any $k \in \Z$, there is a canonical isomorphism $\Ho^k\hs\pia\GG\isoto \Ho^k_\ab\hs\GG$.
\end{proposition}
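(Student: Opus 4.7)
The plan is to build the canonical isomorphism in two steps, translating first from $\R$-tori to cocharacter lattices and then from a short complex to its cokernel.

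First, I would apply Proposition~\ref{p:quasi-tori} to the short complex of $\R$-tori $\TT^\ssc\labelto{\rho}\TT$. This gives, for every $k\in\Z$, a canonical isomorphism
$$\ev_*^k\colon \H^k\bigl(\X_*(\TT^\ssc)\to\X_*(\TT)\bigr)\isoto \H^k(\TT^\ssc\to\TT)=\Ho^k_\ab\GG$$
induced by the evaluation morphism of short complexes. This reduces the problem to computing the hypercohomology of the short complex of cocharacter $\Gamma$-lattices on the left.

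Second, I would show that $\rho_*\colon\X_*(\TT^\ssc)\to\X_*(\TT)$ is injective. Indeed, $\rho\colon\GG^\ssc\onto\GG^\sss\into\GG$ restricts to a homomorphism $\TT^\ssc\to\TT$ which factors as an isogeny $\TT^\ssc\to\TT^\sss$ (with finite central kernel) followed by the closed embedding $\TT^\sss\into\TT$ of a subtorus. Both steps induce injective maps on cocharacter lattices (an isogeny of tori has finite kernel, so no nonzero cocharacter of $\GmC$ can land in it; an inclusion of subtorus induces an inclusion of cocharacter lattices as a direct summand). Hence $\rho_*$ is injective. Now Lemma~\ref{l:quasi-inj} provides a canonical isomorphism
$$\H^k\bigl(\X_*(\TT^\ssc)\to\X_*(\TT)\bigr)\isoto \Ho^k\coker\rho_*=\Ho^k\pia\GG,$$
the last equality being the very definition of the algebraic fundamental group.

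Composing the inverse of the latter isomorphism with $\ev_*^k$ yields the desired canonical isomorphism $\Ho^k\pia\GG\isoto\Ho^k_\ab\GG$ for every $k\in\Z$. The construction is functorial in $\GG$ and independent of the choice of $\TT$ (up to canonical isomorphism), since $\pia\GG$ is known to be independent of $\TT$ by \cite[Lemma~1.2]{Borovoi-Memoir}. There is no substantive obstacle: the result is a direct assembly of Proposition~\ref{p:quasi-tori} and Lemma~\ref{l:quasi-inj}, and the only non-formal input is the injectivity of $\rho_*$ on cocharacters, which is immediate from the factorization of $\rho|_{\TT^\ssc}$ through the isogeny onto $\TT^\sss$.
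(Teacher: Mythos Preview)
Your proof is correct and follows essentially the same approach as the paper: both combine Lemma~\ref{l:quasi-inj} (using the injectivity of $\rho_*$ on cocharacters) with Proposition~\ref{p:quasi-tori}, just in the opposite order. You give a more detailed justification of the injectivity of $\rho_*$ than the paper does, but the argument is otherwise identical.
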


\begin{proof}
Let $\TT\subseteq\GG$ be a maximal torus.
Since the homomorphism
\[\rho_*\colon Q^\vee=\X_*(\TT^\ssc)\lra\X_*(\TT)=X^\vee\]
is injective and $\pia\GG=\coker\rho_*$, by Lemma~\ref{l:quasi-inj}
we have a canonical isomorphism
\[\Ho^k\hs\pia\GG\,\isoto\,\H^k\big(\X_*(\TT^\ssc)\to\X_*(\TT)\big).\]
By Proposition \ref{p:quasi-tori}
we have a canonical isomorphism
\[\H^k\big(\X_*(\TT^\ssc)\to\X_*(\TT)\big)\,\longisoto\,
      \H^k(\TT^\ssc\to\TT)=\Ho^k_\ab\hs\GG,\]
as required.
\end{proof}

\begin{theorem}\label{t:H1ab}
Let $\GG=\RG(\Bm,\tau,q)$ with the notation of Subsection \ref{ss:RG}
(it comes with a fundamental torus $\TT\subseteq \GG$).
With the notation of Theorem \ref{t:H1}, consider the map
\begin{gather*}
\lambda\colon\hs \Km(\Dtil,\Lambda,X,\tau,\qq)\,\lra\, \Hon\pia\GG\\
\big(\pp,[\mm]\big)\hs\longmapsto \hs[\nu_{\pp,\qq,\mm}+Q^\vee].
\end{gather*}
Then the map  $\lambda$ induces a map
\[\lambda_*\colon\hs  \Km(\Dtil,\Lambda,X,\tau,\qq)/F_0\, \lra\, \Ho^1\hs\pia\GG,\]
and the following diagram commutes:
\[
\xymatrix@C=15truemm{
\Km(\Dtil,\Lambda,X,\tau,\qq)/F_0\ar[r]^-{\lambda_*}\ar[d]_-{\kappa_*}  & \Ho^1\hs\pia\GG\ar[d]^-\cong\\
\Ho^1\hs\GG\ar[r]^-{\ab^1} & \Ho^1_\ab\GG
}
\]
where $\kappa_*$ is the bijection of Theorem \ref{t:H1}
and the right-hand vertical arrow is the isomorphism of Proposition \ref{p:H1ab}.
\end{theorem}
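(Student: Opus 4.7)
The plan has two parts: first establish $F_0$-invariance of $\lambda$ modulo $\Bd^1\pia\GG$ so that $\lambda_*$ is well defined, then unpack the right vertical isomorphism from Proposition \ref{p:H1ab} to match the two composites.

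For well-definedness, take $[f]\in F_0$ represented by $f=f_P+f_M\in\wt X_0^\vee\subseteq\wt P_0^\vee\oplus\wt M_0^\vee$.  From the construction of the $C_0$-action on the fundamental alcove $\Delta$ recalled in \cite[\S12]{BT}, the class $[f_P]\in C_0=\wt P_0^\vee/\wt Q_0^\vee$ acts through an affine Weyl element, so $x(f_P\cdot p)=w_P\bigl(x(p)+\ii f_P\bigr)+\ii\nu_1$ for some $w_P\in W_0$ and $\nu_1\in\wt Q_0^\vee$.  Since $y(m+2f_M)-y(m)=\ii f_M$, a direct calculation yields
\[
\nu_{f\cdot(p,[m])}-\nu_{\pp,\qq,\mm}=\tfrac2\ii\bigl(w_P(x(p))-x(p)\bigr)+2w_P(f_P)+2\nu_1+2f_M.
\]
Working modulo $Q^\vee$: the first summand lies in $Q_0^\vee$ because $x(p)\in\ihalf P_0^\vee$ and $w_P$ moves things by elements of $\ihalf Q_0^\vee$; the term $2w_P(f_P)$ differs from $2f_P$ by an element of $Q^\vee$, since $W_0$ arises from inner automorphisms of $\GG$ and thus acts trivially on $\pia\GG=X^\vee/Q^\vee$; and $2\nu_1\in Q_0^\vee$ since $\wt Q_0^\vee\subseteq\tfrac12 Q_0^\vee$.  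Hence the difference reduces modulo $Q^\vee$ to $2f_P+2f_M=2f\in 2\wt X_0^\vee\subseteq X^\vee$.  Writing $2f=f'+\tau(f')$ for a preimage $f'\in X^\vee$ of $f$ under the averaging map, and using $\upgam=-\tau$ on $X^\vee$, we recognize $2f=\upgam(-f')-(-f')\in\Bd^1\pia\GG$.  So $\lambda(f\cdot(p,[m]))$ and $\lambda(p,[m])$ agree in $\Ho^1\pia\GG$.

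For the commutativity, unpack the right vertical isomorphism as the composite of the inverse of Lemma \ref{l:quasi-inj} (valid since $\rho_*\colon\X_*(\TT^\ssc)\to\X_*(\TT)$ is injective) with the evaluation isomorphism of Proposition \ref{p:quasi-tori}.  Starting from $[\nu+Q^\vee]$ with $\nu=\nu_{\pp,\qq,\mm}\in X_0^\vee$: the inverse of Lemma \ref{l:quasi-inj} requires $a_1\in\X_*(\TT^\ssc)$ with $d^1\nu=\rho_*(a_1)$; but $\upgam\nu=-\tau(\nu)=-\nu$, hence $d^1\nu=\upgam\nu+\nu=0$, so $a_1=0$ and the preimage is $[(0,\nu)]\in\H^1(\X_*(\TT^\ssc)\to\X_*(\TT))$.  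Applying evaluation gives $[(1,\nu(-1))]\in\H^1(\TT^\ssc\to\TT)=\Ho^1_\ab\GG$.  For the other path, $\kappa_*(p,[m])=[\nu(-1)]\in\Ho^1\GG$; since $\nu(-1)$ already lies in the maximal torus $\TT(\C)$ and is a cocycle there, the abelianization of \cite{Borovoi-Memoir} produces $\ab^1[\nu(-1)]=[1,\nu(-1)]$---indeed, the cocycle condition $\nu(-1)\cdot\upgam\nu(-1)=1$ is precisely the hypercocycle relation $d^1 a_0=\rho(a_1)$ with the trivial $\TT^\ssc$-component $a_1=1$.  The two outputs coincide.

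The main obstacle is the $F_0$-invariance bookkeeping, and in particular the observation that the Weyl element $w_P$ entering the $C_0$-action on $\Delta$ drops out modulo $Q^\vee$.  This depends critically on two facts used at complementary points of the argument: $W_0$ acts trivially on $\pia\GG$ (because it comes from inner automorphisms of $\GG$), and every element of $\wt X_0^\vee$ is $\tau$-fixed (which is what allows $2f$ to be exhibited as an explicit coboundary in $\pia\GG$).
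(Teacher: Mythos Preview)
Your proof is correct, but it takes a more laborious route than the paper's.  The paper reverses your two steps and thereby gets the $F_0$-invariance for free.  It first factors $\kappa$ through the torus cohomology, assembling a three-row diagram
\[
\xymatrix@C=15truemm{
\Km(\Dtil,\Lambda,X,\tau,\qq)\ar[r]^-\lambda\ar[d]_\kappa & \Ho^1\hs\pia\GG\ar[d]^-\cong\\
\Ho^1\hs\TT\ar[r]^-{[t]\mapsto[1,t]}\ar[d]_{i_*} & \H^1(\TT^\ssc\to\TT)\ar@{=}[d]\\
\Ho^1\hs\GG\ar[r]^-{\ab^1} & \Ho^1_\ab\hs\GG
}
\]
where $\kappa$ sends $(p,[m])$ to $[\nu_{p,q,m}(-1)]\in\Ho^1\TT$ (before passing to $F_0$-orbits).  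The top square is precisely your second computation; the bottom square is the defining property of $\ab^1$ on torus cocycles from \cite{Borovoi-Memoir}.  Once this diagram commutes, the $F_0$-invariance of $\lambda$ follows automatically: the composite $i_*\circ\kappa$ down the left column is constant on $F_0$-orbits because Theorem~\ref{t:H1} says exactly that it descends to $\kappa_*$, and the right-hand verticals are isomorphisms.

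Your direct verification of $F_0$-invariance is sound, but the bookkeeping with the affine Weyl element $w_P$ and the coboundary identification $2f=f'+\tau(f')=-(\upgam f'-f')$ duplicates work already absorbed into Theorem~\ref{t:H1}.  The paper's argument trades that computation for a single structural observation.  What your approach buys is self-containment: you never invoke the descent of $\kappa$ to $F_0$-orbits as a black box, so your proof would survive even if one wanted to establish Theorem~\ref{t:H1ab} independently of (or simultaneously with) Theorem~\ref{t:H1}.
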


Theorem \ref{t:H1ab} computes the abelianization map $\ab^1$.

\begin{proof}
Consider the diagram
\begin{equation*}
\begin{aligned}
\xymatrix@C=15truemm{
\Km(\Dtil,\Lambda,X,\tau,\qq)\ar[r]^-\lambda\ar[d]_\kappa        &\Ho^1\hs\pia\GG\ar[d]^-\cong\\
\Ho^1\hs\TT\ar[r]^-{[t]\mapsto[1,t]}\ar[d]_{i_*}        &\H^1(\TT^\ssc\to\TT)\ar@{=}[d]  \\
\Ho^1\hs\GG\ar[r]^-{\ab^1}                       &\Ho^1_\ab\hs\GG
}
\end{aligned}
\end{equation*}
in which the map $\kappa$ is given by $\big(\pp,[\mm]\big)\mapsto \big[\nu_{p,q,m}(-1)\big]$
and the map $i_*$ is induced by the inclusion map $i\colon \TT\into\GG$.
In this diagram, the top rectangle clearly commutes,
and we know from the definition of the morphism of functors $\ab^1$
in terms of hypercohomology of crossed modules
in \cite[Section 3.10]{Borovoi-Memoir}
that the bottom rectangle commutes as well. Thus the diagram commutes.
Since the composite left-hand vertical arrow is constant on the orbits of $F_0$,
and the right-hand vertical arrows are isomorphisms,
we conclude that the map $\lambda$ is constant on the orbits of $F_0$,
which completes the proof of the theorem.
\end{proof}

\newcommand{\Xv}{X^\vee}
\newcommand{\Xsc}{Q^\vee}
\newcommand{\Xad}{P^\vee}

\section{The group of real connected components of a reductive group}
\label{s:pi0R}

In this section we compute the group of real connected components
$\piR\GG\coloneqq \pi_0\hs\GG(\R)$ for a connected reductive $\R$-group $\GG$
in terms of the algebraic fundamental group $\pia\GG$.

\subsec{}\label{ss:action}
Let $\TT\subseteq \GG$ be a maximal torus, not necessarily fundamental.
Recall that
\[\pia{\GG}=\Xv\!/\hs\Xsc,\quad\text{where}
    \quad \Xv=\X_*(\TT),\ \Xsc=X_*(\TT^\ssc).\]
Recall that
\[\GG^\ad=\GG/Z(\GG),\quad \TT^\ad=\TT/Z(\GG),\quad \Xad=\X_*(\TT^\ad),\quad C=\Xad/\Xsc.\]
The canonical homomorphism $\Ad\colon \GG\to\GG^\ad$ induces a homomorphism of $\Ga$-modules
$$ \pia\GG\to\pia\GG^\ad=C,$$
which in turn induces a homomorphism
\[ \Ad_*\colon \Ho^0\hs\pia\GG\to \Ho^0\hs C.\]
Consider the composite map
\begin{align*}
\phi\colon\, \Ho^0\hs\pia\GG\labeltoo{\Ad_*} \Ho^0\hs C
     \longisoto\Ho^1\hs\ZZ^\ssc\lra \Ho^1\hs\TT^\ssc \lra \Ho^1\hs\GG^\ssc,
\end{align*}
where the third and fourth arrows are induced
by the inclusion homomorphisms $\ZZ^\ssc\into\TT^\ssc\into\GG^\ssc$.
Explicitly:
\[\phi[\nu+\Xsc]=\big[\Exp^\ssc(\ii\nu^\ad)\big],\]
where $\nu^\ad\in\Xad\subset\tl^\ad=\tl^\ssc$ is the image of $\nu\in \Xv$
under the canonical homomorphism $\Xv\to\Xad$.
Let $(\Ho^0\hs\pia\hs\GG)_1$ denote the kernel of $\phi$,
that is, the preimage in $\Ho^0\hs\pia\GG$ of $[1]\in\Ho^1\hs\GG^\ssc$.

Now write $\GG=\RG(\Bm,\tau,q)$ with the notation of Subsection \ref{ss:RG}.
The group $ \Ho^0\hs C$ acts
on the affine Dynkin diagram $\Dtil$ of $\GG^\ssc$
via the homomorphism $\vs^0$ of  \eqref{e:vs-00},
and composing with $\Ad_*$ we obtain an action
of $\Ho^0\hs\pia\GG$ on $\Dtil$, and hence on $\Km(\Dtil)$.

\pagebreak[2]
\begin{theorem}\label{t:pi0R}\
\begin{enumerate}
\item[\rm(i)]
Let $\GG$ be a connected reductive $\R$-group.
Then the subset   $(\Ho^0\hs\pia\hs\GG)_1$
of the abelian group $\Ho^0\hs\pia\hs\GG$ is a subgroup,
and there exists a canonical isomorphism of abelian groups
\[\psi\colon(\Ho^0\hs \pia\GG)_1\longisoto\piR\GG.\]
\item[\rm(ii)]
Write $\GG=\RG(\Bm,\tau,q)$ with the notation of Subsection \ref{ss:RG}.
Then  $(\Ho^0\hs\pia\hs\GG)_1$ is the stabilizer of $q\in\Km(\Dtil)$
under the action of the abelian group $\Ho^0\hs \pia\GG$
on $\Km(\Dtil)$ described in Subsection \ref{ss:action}.
\end{enumerate}
\end{theorem}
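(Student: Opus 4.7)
The plan is to handle part (ii) first, since it is essentially immediate and it automatically implies the subgroup assertion of part (i). By the construction in Subsection \ref{ss:action}, the $\Ho^0\pia\GG$-action on $\Km(\Dtil)$ factors as $\Ho^0\pia\GG\xrightarrow{\Ad_*}\Ho^0 C\xrightarrow{\vs^0}\Aut\Dtil$, so the stabilizer of $q$ in $\Ho^0\pia\GG$ is the $\Ad_*$-preimage of the stabilizer of $q$ in $\Ho^0 C$. Corollary \ref{c:Stab} identifies the latter with $\ker\iota_*^0$, whence the former equals $\ker(\iota_*^0\circ\Ad_*)=\ker\phi=(\Ho^0\pia\GG)_1$. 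Since it arises as a point stabilizer for a group action, this subset is automatically a subgroup of $\Ho^0\pia\GG$.

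For part (i), I would construct $\psi$ by passing through a fundamental (maximally compact) $\R$-torus $\TT\subseteq\GG$. A classical theorem due to Matsumoto (cf.\ \cite[Theorem 14.4]{Borel-Tits} and the discussion in the Introduction) asserts that every connected component of $\GG(\R)$ meets $\TT(\R)$, so $\piR\TT\twoheadrightarrow\piR\GG$ is surjective. Corollary \ref{c:H0-X(T)} gives $\piR\TT\cong\Ho^0\Xv$, and the short exact sequence $0\to\Xsc\to\Xv\to\pia\GG\to 0$ of $\Ga$-modules yields, through its Tate long exact sequence, a homomorphism $\Ho^0\Xv\to\Ho^0\pia\GG$. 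Composing, I obtain a map $f\colon\piR\TT\to\Ho^0\pia\GG$.

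The next step is to show that $f$ descends through $\piR\TT\twoheadrightarrow\piR\GG$ to a homomorphism $\psi\colon\piR\GG\to(\Ho^0\pia\GG)_1$. Descent works because (a) the Weyl group $W(\GG,\TT)$ acts trivially on $\pia\GG=\Xv/\Xsc$ (reflections change $\Xv$ only by coroots lying in $\Xsc$), and (b) the kernel of $\piR\TT\to\piR\GG$ is generated modulo $W$ by $\rho(\piR\TT^\ssc)\cong\Ho^0\Xsc$, which maps to zero in $\Ho^0\pia\GG$ by exactness of the Tate sequence. The image lies in $(\Ho^0\pia\GG)_1$ because, for $[t]\in\piR\GG$ with $t\in\TT(\R)$, the functoriality of the connecting map for the central extension $\ZZ^\ssc\hookrightarrow\GG^\ssc\twoheadrightarrow\GG^\ad$ identifies $\phi(\psi([t]))$ with the image of $t^\ad\in\GG^\ad(\R)$ under the nonabelian coboundary $\GG^\ad(\R)\to\Ho^1\ZZ^\ssc\to\Ho^1\GG^\ssc$, which is trivial by exactness.

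Bijectivity of $\psi$ comes from a coordinated diagram chase combining the Tate sequence for $\Xsc\subset\Xv$ with the nonabelian cohomology sequence for $\ZZ^\ssc\subseteq\GG^\ssc$, using Lemma \ref{l:A-X(T)-X(T')} and Proposition \ref{p:quasi-tori} to bridge the two. For surjectivity: given $\xi\in(\Ho^0\pia\GG)_1$, the vanishing of $\phi(\xi)$ in $\Ho^1\GG^\ssc$ forces, via the bridging diagram, the image of $\xi$ in $\Ho^1\Xsc$ to lift to a class coming from $\Ho^0\Xv\cong\piR\TT$, producing a preimage in $\piR\GG$. For injectivity: if $[t]\in\piR\GG$ with $t\in\TT(\R)$ maps to $0$ in $\Ho^0\pia\GG$, the same bridging diagram shows $t$ decomposes, modulo $\TT(\R)^0$, into contributions from $\rho(\TT^\ssc(\R))\subseteq\GG(\R)^0$, so $[t]=1$ in $\piR\GG$. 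The main technical obstacle is carrying out this coordinated chase cleanly, coupling the abelian Tate sequence with the nonabelian Galois cohomology sequence; the requisite commutative diagrams are essentially assembled already in Lemmas \ref{l:A-X(T)-X(T')} and \ref{l:coho-hyper}, and the five-lemma delivers both conclusions.
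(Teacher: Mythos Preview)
Your argument for part~(ii) is correct and essentially matches the paper's: both use Corollary~\ref{c:Stab} to identify $(\Ho^0 C)_q$ with $\ker\iota_*^0$ and then pull back along $\Ad_*$.

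However, your approach to part~(i) contains a genuine error. You invoke Matsumoto's theorem to assert that $\piR\TT\twoheadrightarrow\piR\GG$ for the \emph{fundamental} torus $\TT$, but Matsumoto's result (and Borel--Tits \cite[Theorem~14.4]{Borel-Tits}, as quoted in the Introduction) concerns the maximal \emph{split} torus, not the fundamental one. These are opposite notions: the fundamental torus has maximal \emph{compact} part $\TT_0$. The surjectivity you claim is false in general. For instance, take $\GG=\GL_{2,\R}$: the fundamental torus is $\RW_{\C/\R}\GmC$, whose real points $\C^\times$ are connected, so $\piR\TT=1$; yet $\piR\GL_{2,\R}\cong\Z/2\Z$. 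Even if you switch to a maximally split torus, your descent step~(b) --- that the kernel of $\piR\TT\to\piR\GG$ is generated modulo $W$ by $\rho(\piR\TT^\ssc)$ --- is unjustified and is not obviously lighter than the theorem itself; you would need an independent description of $\TT(\R)\cap\GG(\R)^0$.

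The paper avoids all of this by working instead with the (Lie-theoretic, not algebraic) universal cover $\rtil\colon\Gtil=G^\ssc\times\sll\to G$, whose kernel is identified with $\pit\GG\cong(\ii)\pia\GG$. The short exact sequence $1\to\pit\GG\to\Gtil\to G\to 1$ yields a cohomology exact sequence; since $\GGtil(\R)=\GG^\ssc(\R)\times\ssl(\R)$ is connected (Proposition~\ref{t:sc-connected}), one reads off directly that $\piR\GG\hookrightarrow\Ho^1\pit\GG\cong\Ho^0\pia\GG$ with image $\ker[\Ho^1\pit\GG\to\Ho^1\GG^\ssc]=(\Ho^0\pia\GG)_1$. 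No torus surjectivity or kernel computation is needed.
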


The theorem will be proved in Subsections \ref{ss:Proof-pit} and \ref{ss:Proof-piR}.

\begin{example}
Let $\GG=\TT$ be an $\R$-torus.
Then Theorem \ref{t:pi0R} says that $\piR\TT\cong\Ho^0\hs\hs\X_*(\TT)$,
which is Corollary~\ref{c:H0-X(T)}.
\end{example}

\subsec{}\label{ss:map-ker-vs}
We specify the map $\psi$ in Theorem \ref{t:pi0R}(i).
Let $\nu\in \Xv$ be such that
\begin{align}
&\nu+\Xsc\in \Zl^0\hs\pia\GG,\label{e:a}\\
&[\nu+\Xsc]\in\ker\phi.\label{e:b}
\end{align}
Here \eqref{e:a} means that
\begin{equation}\label{e:c}
\upgam\nu-\nu=\nu^\ssc\quad \text{for some }\nu^\ssc\in\Xsc.
\end{equation}
Then  $\nu^\ssc\in \Zl^1\hs\Xsc$.
Set
\[t=\nu(-1)\in T\subset G\quad\text{and}\quad t^\ssc=
        \nu^\ssc(-1)\in \Zl^1\hs\TT^\ssc\subset\Zl^1\hs\GG^\ssc.\]

The following diagram commutes:
\[
\xymatrix@C=12mm{
\raise 3pt \hbox{$\Xsc$}\!\!\!\!
\ar@{_(->}[d]\ar@{^(->}[rd]\\
\Xv\ar[r]^-{\Ad_*}  &\Xad
}
\]
Now it follows from \eqref{e:c} that
\[\upgam\nu^\ad-\nu^\ad=\nu^\ssc,\]
where $\nu^\ad=\Ad_*(\nu)\in\Xad$.
Write $\nu^\ad=\nu_0+\nu_1$, where
$$\nu_0=\half(\nu^\ad-\upgam\nu^\ad)
\quad\text{and}\quad
\nu_1=\half(\nu^\ad+\upgam\nu^\ad).$$
Then $\upgam\nu_0=-\nu_0$, $\upgam\nu_1=\nu_1$, and $2\nu_0=-\nu^\ssc$.

By \eqref{e:b} we have $[\Exp^\ssc(\ii\nu^\ad)]=[1]\in H^1\hs\GG^\ssc$.
Since
\[\Exp^\ssc(\ii\nu_1/2)^{-1}\cdot\Exp^\ssc(\ii\nu^\ad)\cdot\upgam\Exp^\ssc(\ii\nu_1/2)
    =\Exp^\ssc(\ii\nu_0)=\exp(-\pi\ii\nu^\ssc)=t^\ssc,\]
we see that $[t^\ssc]=[1]\in H^1\hs\GG^\ssc$ and therefore
$t^\ssc=(g^\ssc)^{-1}\cdot\upgam g^\ssc$ for some $g^\ssc\in G^\ssc$.
We set
\begin{equation}\label{e:g}
g=\rho(g^\ssc)\cdot t^{-1}\in G.
\end{equation}

We compute $\upgam\hm g$.
By \eqref{e:c} we have $\upgam t=t\cdot\rho(t^\ssc)$.
By construction we have $\upgam\hm{g^\ssc}=g^\ssc\hs t^\ssc$. Thus
\[\upgam\hm g=\rho(\hs\upgam\hm{g^\ssc}\hs)\cdot\hm\upgam t^{-1}
    =\rho(g^\ssc\hs t^\ssc)\cdot\rho(t^\ssc)^{-1}\cdot t^{-1}
    =\rho(g^\ssc)\cdot t^{-1}=g.\]
We conclude that $g\in\GG(\R)$. We set
$\psi[\nu+\Xsc]=[g]\in \pi_0^\R\hs\GG$.

\subsec{}
Let $\pit{G}$ denote the topological fundamental group of $G=\GG(\C)$.
Recall that $\pit{G}$ is the group of equivalence classes of {\em loops},  that is, continuous maps
$l\colon[0,1]\to G$ from the segment $[0,1]$ to $G$ with $l(0)=l(1)=1_G$.
It is well known that $\pit{G}$ a finitely generated abelian group.
Since the Galois group $\Gamma$ acts on $\GG(\C)$ continuously in the usual Hausdorff topology,
it naturally acts on $\pit{G}$.
We usually  write $\pit\GG$ for $\pit{G}$ to stress that it is a $\Gamma$-module
with the $\Gamma$-action induced by the real form $\GG$ of $G$.

\begin{example}\label{ex:GmR}
Take $\GG=\GmR$.
We have a canonical isomorphism of $\Gamma$-modules
\[\ii\Z\labelto{\sim}\pit\hs\GmR=\pit\hs\C^\times,\quad\
   \ii\!\cdot\! n\, \longmapsto\, [t\mapsto\exp 2\pi\ii n t],\ \,n\in\Z,\ t\in[0,1].\]
\end{example}

\begin{proposition}[{\cite[Proposition 1.11]{Borovoi-Memoir}}]
\label{p:Memoir}
Let $\GG$ be a connected reductive $\R$-group.
With the above notation, there is a canonical isomorphism
\begin{align}\label{e:pia-pit}
(\ii)\pia\GG\cong\ii \Xv\!/\hs\ii \Xsc\ \longisoto\ \pit\GG.
\end{align}
\end{proposition}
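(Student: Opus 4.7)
The plan is to use the scaled exponential map on $\tl=\Lie\TT$ to identify $\pit\TT$ with $\ii X^\vee$ as $\Gamma$-modules, and then pass to $\pit\GG$ via the classical surjection $\pit\TT\twoheadrightarrow\pit\GG$ induced by the inclusion of a maximal torus.

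For the $\R$-torus $\TT$, the scaled exponential $\Exp\colon\tl\to T$, $x\mapsto\exp(2\pi x)$, is a universal covering of the connected complex Lie group $T$. Under the embedding $X^\vee=\X_*(\TT)\into\tl,\ \nu\mapsto d\nu(1)$, the identity $\Exp(\ii\hs d\nu(1))=\nu(e^{2\pi\ii})=1$ shows $\ii X^\vee\subseteq\ker\Exp$, and a direct computation in a cocharacter basis yields equality. Hence there is a canonical isomorphism $\pit\TT\cong\ii X^\vee$. Since both sides inherit their $\Gamma$-action from complex conjugation on $\tl$, which sends $\ii\hs\nu$ to $-\ii\hs\upgam\nu$, this identification is $\Gamma$-equivariant and amounts to a canonical $\Gamma$-module isomorphism $\pit\TT\cong(\ii)X^\vee$. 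Applying the same reasoning to $\TT^\ssc=\rho^{-1}(\TT)$ and using functoriality of $\pit$, the map $\rho_*\colon\pit\TT^\ssc\to\pit\TT$ is identified with the inclusion $\ii Q^\vee\hookrightarrow\ii X^\vee$ of $\Gamma$-modules.

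Next, I invoke the classical topological fact: for a connected complex reductive Lie group $G$ with maximal torus $T$, the inclusion $T\hookrightarrow G$ induces a surjection $\pit T\twoheadrightarrow\pit G$ whose kernel coincides with the image of $\pit T^\ssc\to\pit T$; equivalently, $\pit G\cong X^\vee/Q^\vee$ as abstract groups. A brief justification: the Bruhat decomposition exhibits $G/B$ as a CW-complex with only even-dimensional cells, hence simply connected; since $B/T$ is contractible, $G/T$ is simply connected. The long exact sequence of the $T$-fibration $T\to G\to G/T$ then yields the surjectivity, while $\pi_2(G/T)$ is generated by the $2$-spheres coming from the $\SL_2$-subgroups attached to simple coroots, identifying the kernel with the image of $\pit T^\ssc$.

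Combining the two steps produces the desired canonical $\Gamma$-equivariant isomorphism
\[\pit\GG\,\cong\,\ii X^\vee\hs/\hs\ii Q^\vee\,=\,(\ii)(X^\vee/Q^\vee)\,=\,(\ii)\pia\GG.\]
The main obstacle is the classical topological input in the third paragraph, which is well-known but nontrivial; once it is granted, everything else is straightforward linear-algebraic bookkeeping, with the $(\ii)$-twist arising naturally from the identity $\ker\Exp=\ii X^\vee$.
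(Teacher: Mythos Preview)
Your proof is correct and takes a genuinely different route from the paper. The paper constructs an explicit universal covering of complex Lie groups $\rtil\colon\Gtil\coloneqq G^\ssc\times\sll\to G$, \ $(g^\ssc,y)\mapsto\rho(g^\ssc)\cdot\Exp_S(y)$, where $\sll=\Lie Z(\GG)^0$; since $\Gtil$ is simply connected, $\pit\GG$ is identified with $\ker\rtil$, which is then computed to be $\ii X^\vee/\ii Q^\vee$ via a second covering $\wt\Exp\colon\tl\to T^\ssc\times\sll$. You instead use the classical torus fibration $T\to G\to G/T$ together with the simply-connectedness of $G/T$ (via the Bruhat cell decomposition) to obtain $\pit\GG$ as a quotient of $\pit\TT\cong\ii X^\vee$. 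Both arguments yield the same explicit formula $\ii\nu\mapsto[t\mapsto\nu(\exp 2\pi\ii t)]$. Your approach is more self-contained topologically and avoids introducing the auxiliary group $\Gtil$; the paper's approach, however, is written precisely so as to set up the short exact sequence $1\to\pit\GG\to\GGtil\to\GG\to 1$ of $\Gamma$-groups, which is the key input in the subsequent proof of Theorem~\ref{t:pi0R}(i) computing $\piR\GG$.
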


\begin{proof}
We give an alternative proof of Proposition \ref{p:Memoir}.
In this proof we introduce notions that are necessary for the proof of Theorem \ref{t:pi0R},
and we give the explicit formula \eqref{e:pia-pit-expl-bis} for the isomorphism \eqref{e:pia-pit}.

Recall that $\ssl=\Lie(\SS)$, where $\SS=Z(\GG)^0$.
Consider the $\R$-group $\GGtil\coloneqq\GG^\ssc\times\ssl$,
the product of the simply connected $\R$-group $\GG^\ssc$
and the commutative unipotent $\R$-group $\ssl$.
Observe that the complex Lie group $\Gtil=\GGtil(\C)=G^\ssc\times\sll$ is simply connected.
Consider the surjective $\Gamma$-equivariant homomorphism of complex Lie groups
\[ \rtil\colon\, \Gtil=G^\ssc\times\sll\,\lra\, G^\sss\times S\,\to\, G,\quad \gtil=(g^\ssc,y)\longmapsto
     \rho(g^\ssc)\cdot\Exp_S(y)\ \ \text{for}\  g^\ssc\in G^\ssc,\ y\in\sll.\]
(Note that $\rtil$ is \emph{not} a homomorphism of  algebraic groups!) Since each of the homomorphisms
\[ G^\ssc\to G^\sss,\quad  \sll\to S,\quad G^\sss\times S\to G\]
has discrete kernel, the homomorphism $\rtil$ has discrete kernel as well.
We see that $\rtil$ is a universal covering of $G$.

By the exact homotopy sequence for $\rtil$, the group
$\wt{Z}\coloneqq\ker\rtil$ can be identified with $\pit\GG$;
see, for instance, \cite[Section 1.3, Theorem 4 and Corollary 2]{OV}.
This identification goes as follows. A loop $l\colon [0,1]\to G$
with $l(0)=l(1)=1_G$ can be uniquely lifted to a continuous path $\ltil\colon [0,1]\to \Gtil$
with $\ltil(0)=1_\Gtil$\hs. Then $\tilde\rho\big(\ltil(1)\big)=l(1)=1_G$\hs, whence $\ltil(1)\in\Ztil$.
To $[l]\in\pit \GG$ we associate $\ltil(1)\in\Ztil$.

On the other hand, $\wt{Z}$ is contained in $T^\ssc\times\sll$.
Let $\tilde{t}=(t^\ssc,y)\in T^\ssc\times\sll$,
$t^\ssc=\Exp^\ssc(x)$, $x\in\tl^\ssc$, $y\in\sll$. Then
we have $$\rtil(\tilde{t})=\rho(t^\ssc)\cdot\Exp_S(y)=\Exp^\sss(x)\cdot\Exp_S(y)=\Exp(x+y).$$
Hence $\tilde{t}\in\wt{Z}$ if and only $x+y\in\ii \Xv$. The homomorphism
$$\wt\Exp:\tl=\tl^\ssc\oplus\sll\to T^\ssc\times\sll,\qquad\wt\Exp(x+y)=(\Exp^\ssc(x),y)$$
is a universal covering and $\ker\wt\Exp=\ii\Xsc$.
Hence $\wt{Z}=\wt\Exp(\ii \Xv)\cong\ii \Xv/\ii\Xsc\cong(\ii)\pia\GG$.

We obtain a  $\Gamma$-equivariant isomorphism
\begin{align}
(\ii)\pia\GG\isoto&\wt{Z}\isoto\pit\GG \label{e:pia-pit-expl}  \\
\ii\nu+\ii \Xsc \longmapsto \wt\Exp&(\ii\nu)
    \longmapsto \big[t\mapsto\Exp(t\ii\nu)=\nu(\exp2\pi\ii t)\big]
    \quad  \text{for}\ \nu\in \Xv,\ t\in[0,1],\label{e:pia-pit-expl-bis}
\end{align}
as required. The explicit formula \eqref{e:pia-pit-expl-bis}
for the homomorphism \eqref{e:pia-pit-expl}
stems from an observation that the loop $l:t\mapsto\Exp(t\ii\nu)$ in $G$
lifts to the path $\ltil:t\mapsto\wt\Exp(t\ii\nu)$ in $\Gtil$
starting at $1_\Gtil$ and ending at $\wt\Exp(\ii\nu)$.
\end{proof}

\begin{corollary}
There is a canonical isomorphism
\begin{equation}\label{a:pi1}
\Ho^0\hs\pia\GG\longisoto \Ho^1\hs\pit\GG,\quad
\, [\nu+ \Xsc]\,\mapsto\hs  \big[t\mapsto\nu(\exp 2\pi\ii t)\big]\ \ \text{for}\ \nu\in \Xv.
\end{equation}
\end{corollary}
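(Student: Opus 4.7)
The plan is to derive this corollary by composing two canonical isomorphisms already established. First, I apply Corollary \ref{c:ii, k+1} with $k=0$ and $A=\pia\GG$, which gives a canonical isomorphism
\[\Ho^0\hs\pia\GG \longisoto \Ho^1\hs(\ii)\pia\GG,\quad [\nu+Q^\vee]\mapsto [\ii\nu+\ii Q^\vee],\]
induced by the $\Gamma$-anti-equivariant isomorphism $a\mapsto (\ii)a$. Second, I apply Proposition \ref{p:Memoir}, which provides a $\Gamma$-equivariant isomorphism $(\ii)\pia\GG\isoto \pit\GG$, functorially inducing an isomorphism $\Ho^1\hs(\ii)\pia\GG\isoto \Ho^1\hs\pit\GG$. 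Composing these two isomorphisms yields the desired canonical isomorphism $\Ho^0\hs\pia\GG\isoto\Ho^1\hs\pit\GG$.

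It remains only to verify the explicit formula. A class $[\nu+Q^\vee]\in \Ho^0\hs\pia\GG$ is first sent to $[\ii\nu+\ii Q^\vee]\in\Ho^1\hs(\ii)\pia\GG$. Then, by the explicit description \eqref{e:pia-pit-expl-bis} in the proof of Proposition \ref{p:Memoir}, the element $\ii\nu+\ii Q^\vee$ corresponds to the loop $t\mapsto \nu(\exp 2\pi\ii t)$ in $\pit\GG$. Therefore the composite isomorphism sends
\[[\nu+Q^\vee]\,\longmapsto\,\bigl[t\mapsto\nu(\exp 2\pi\ii t)\bigr],\]
as asserted.

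No step poses any real obstacle: once Proposition \ref{p:Memoir} is in hand and one recognizes the role of the $(\ii)$-shift functor provided by Corollary \ref{c:ii, k+1}, the corollary is essentially a formal consequence. The only point requiring care is bookkeeping with the factor of $\ii$ to confirm that the explicit formula on representatives matches the one stated; this is straightforward given formula \eqref{e:pia-pit-expl-bis}.
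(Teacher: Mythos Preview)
Your proof is correct and follows essentially the same approach as the paper: the paper's proof is the single sentence ``The corollary follows from \eqref{e:pia-pit-expl}, \eqref{e:pia-pit-expl-bis}, and Corollary \ref{c:ii, k+1},'' which is precisely the composition you spell out. Your version is simply a more explicit unpacking of the same two ingredients.
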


\begin{proof}
The corollary follows from \eqref{e:pia-pit-expl},
    \eqref{e:pia-pit-expl-bis}, and Corollary \ref{c:ii, k+1}.
\end{proof}

\begin{proposition}\label{t:sc-connected}
Let $\GG^\ssc$ be a {\emm simply connected} semisimple $\R$-group.
Then the real Lie group $\GG^\ssc(\R)$ is connected.
\end{proposition}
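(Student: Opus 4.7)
The plan is to reduce the proof, via the Cartan (polar) decomposition, to the classical theorem that the fixed-point subgroup of an involutive automorphism of a compact, connected, simply connected Lie group is connected.

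First I would decompose $\GG^\ssc$ as a direct product of its $\R$-simple factors. Each such factor is either absolutely simple over $\R$, or of the form $\RW_{\C/\R}\HH$ for some absolutely simple simply connected $\C$-group $\HH$; in the latter case $(\RW_{\C/\R}\HH)(\R)\cong\HH(\C)$ is connected, being the complex points of a connected algebraic group. This reduces the claim to the case when $\GG^\ssc$ is absolutely simple over $\R$.

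Set $G=\GG^\ssc(\R)$ and $G_\C=\GG^\ssc(\C)$, let $\sigma$ be the complex conjugation defining $\GG^\ssc$, and choose a compact real form $\GG_u$ of $\GG^\ssc_\C$ for which $U:=\GG_u(\R)$ is a $\sigma$-stable maximal compact subgroup of $G_\C$ (equivalently, $\sigma$ commutes with the compact conjugation $\sigma_c$ fixing $U$; this is possible by the standard Cartan--Iwasawa--Malcev theorem). Set $\theta:=\sigma\sigma_c$; then $\theta$ restricts to a Cartan involution of $G$, and the polar decomposition yields a diffeomorphism $K\times\mathfrak{p}\isoto G$, $(k,x)\mapsto k\exp(x)$, where $K=G^\theta=G\cap U=U^\sigma$ is a maximal compact subgroup of $G$ and $\mathfrak{p}\subseteq\Lie\GG^\ssc$ is the $(-1)$-eigenspace of $d\theta$. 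Hence $G$ is connected if and only if $K$ is.

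By Proposition \ref{p:Memoir} one has $\pit\GG^\ssc\cong(\ii)\pia\GG^\ssc=0$, since $\pia\GG^\ssc=0$ for the simply connected group $\GG^\ssc$. Thus $G_\C$ is topologically simply connected, and since $U$ is a deformation retract of $G_\C$, the compact Lie group $U$ is connected and simply connected. At this point I would invoke the classical theorem of E.\,Cartan (see, e.g., Helgason, \emph{Differential Geometry, Lie Groups, and Symmetric Spaces}, Ch.\,VII, Theorem~8.2): the fixed-point subgroup of any involutive automorphism of a compact connected simply connected Lie group is connected. Applied to the involution $\sigma|_U$ on $U$, this yields that $K=U^\sigma$ is connected, and hence $G=\GG^\ssc(\R)$ is connected. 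The main obstacle is precisely this last step: it rests on a nontrivial classical connectedness result whose usual proof uses maximal flats in the symmetric space $U/K$ and careful analysis of the exponential map, machinery lying outside the combinatorial framework developed elsewhere in the paper.
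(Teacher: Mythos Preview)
Your argument is correct. The paper itself does not give a proof: it simply cites Borel--Tits \cite[Corollary 4.7]{Borel-Tits-C}, Gorbatsevich--Onishchik--Vinberg \cite[4.2.2, Theorem 2.2]{GOV}, and Platonov--Rapinchuk \cite[Proposition 7.6]{PR}. So there is no ``paper's own proof'' to compare against beyond these references.

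Your route---reduce to a Cartan decomposition $G\cong K\times\mathfrak p$, identify $K=U^\sigma$ inside the compact simply connected $U$, and invoke \'E.~Cartan's theorem that fixed points of an involution on such a $U$ are connected---is one of the standard arguments, and is essentially the approach taken in \cite{GOV}. Two minor remarks: the preliminary reduction to absolutely simple factors is harmless but unnecessary, since the Cartan-decomposition argument applies uniformly to any simply connected semisimple $\GG^\ssc$; and your appeal to Proposition~\ref{p:Memoir} to get $\pit G_\C=0$ is legitimate (that proposition is proved independently of the present one), though of course for a simply connected semisimple complex group this fact is classical and could be cited directly. The Borel--Tits and Platonov--Rapinchuk proofs proceed differently, via Bruhat-type decompositions and the structure of parabolic subgroups over $\R$, avoiding the compact-form machinery altogether; those approaches generalize to other fields, whereas yours is specific to $\R$ but more geometric.
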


\begin{proof}
See Borel and Tits  \cite[Corollary 4.7]{Borel-Tits-C},
or Gorbatsevich, Onishchik and Vinberg  \cite[4.2.2, Theorem 2.2]{GOV},
or Platonov and Rapinchuk \cite[Proposition 7.6 on page 407]{PR}.
\end{proof}

\subsec{\it Proof of Theorem \ref{t:pi0R}(i).}
\label{ss:Proof-pit}
The identification $\pit\GG\cong\wt{Z}$ yields
a short exact sequence of $\Gamma$-groups
\[  1\to \pit\GG\labelto{i} \GGtil \labelto{\rtil} \GG\to 1,\]
which gives rise to a cohomology exact sequence
\[\wt\GG(\R)=\GG^\ssc(\R)\times\ssl(\R)\, \lra\, \GG(\R)\,\lra\, \Ho^1\hs\pit\GG\,
               \labelto{i_*}\, \Ho^1\hs\wt\GG=\Ho^1\hs\GG^\ssc\times\Ho^1\hs\ssl=\Ho^1\hs\GG^\ssc.\]
By Proposition \ref{t:sc-connected} the Lie group $\wt\GG(\R)$ is connected,
and its image in $\GG(\R)$ is connected and open, and hence it is the identity component.
Thus we obtain an injective homomorphism
\[\piR\GG=\coker\big[\wt\GG(\R)\to\GG(\R)\big]\,\into\, \Ho^1\hs\pit\GG\]
whose image is
\begin{equation}\label{e:ker-G-Gssc}
\ker\big[\Ho^1\hs\pit\GG\labelto{i_*} \Ho^1\hs\GG^\ssc\big].
\end{equation}
It follows that the kernel \eqref{e:ker-G-Gssc} is a subgroup.
Using the isomorphism \eqref{a:pi1}, we obtain a group isomorphism
\begin{equation}\label{e:piR-pia-H1}
\piR\GG\longisoto\ker\big[\Ho^0\hs\pia\GG \labelto{\phi} \Ho^1\hs\GG^\ssc\big]=(\Ho^0\hs\pia\GG)_1.
\end{equation}
Since $\Ho^0\hs\pia\GG$ is an abelian group killed by multiplication by 2,
we see that so is $\piR\GG$.

For any $g\in\GG(\R)$, the isomorphism \eqref{e:piR-pia-H1}
sends $[g]\in\piR\GG$ to the cohomology class in $\Ho^0\hs\pia\GG\cong\Ho^1\hs\pit\GG$
corresponding to the 1-cocycle $\tilde{z}=\tilde{g}\hs^{-1}\cdot\hm\upgam\tilde{g}\in\wt{Z}$,
where $\tilde{g}\in\Gtil$ is such that $\rtil(\tilde{g})=g$.
If $g$ is given by \eqref{e:g}, then,
with the notation of Subsection~\ref{ss:map-ker-vs},
we may take $\tilde{g}=g^\ssc \cdot\tilde{t}^{-1}$,
where $\tilde{t}=\wt\Exp(\ii\nu/2)\in T^\ssc\times\sll$.
We have
\[\tilde{z}=\tilde{t}\cdot(g^\ssc)^{-1}\cdot\hm\upgam\hm g^\ssc\cdot\hm\upgam\tilde{t}^{-1}
   =\tilde{t}\cdot t^\ssc\cdot\hm\upgam\tilde{t}^{-1}
   =\tilde{t}\cdot(t^\ssc)^{-1}\cdot\hm\upgam\tilde{t}^{-1}
   =\wt\Exp\big(\ii\nu/2-\ii\nu^\ssc/2+\ii\upgam\hm\nu/2\big)
   =\wt\Exp(\ii\nu),\]
where the last equality follows from \eqref{e:c}.
Then $\tilde z$ represents the cohomology class
$[\nu+\Xsc]\in\Ho^0\hs\pia\GG\cong\Ho^1\hs\pit\GG$.
Thus \eqref{e:piR-pia-H1} is the inverse of the map
$\psi$ of Subsection~\ref{ss:map-ker-vs},
which completes the proof of Theorem \ref{t:pi0R}(i).  \qed

\subsec{\it Proof of Theorem \ref{t:pi0R}(ii).}
\label{ss:Proof-piR}
Now we assume that the maximal torus $\TT$ is fundamental.
Consider the canonical isomorphism $\Ad\colon \GG\to\GG^\ad$ and the commutative diagram
\begin{equation*}
\xymatrix{
\pit\GG\ar[r]^{i}\ar[d]        &\Gtil\ar[d]^-{\text{projection}} \\
\pit\GG^\ad\ar[r]^-{i^\ad}     &G^\ssc
}
\end{equation*}
which induces a commutative diagram
\begin{equation}\label{e:pia-pit-action}
\begin{aligned}
\xymatrix@C=15truemm{
\Ho^0\hs\pia\GG\ar[r]^-\sim\ar[d]_-{\Ad_*^0} &\Ho^1\hs\pit\GG\ar[r]^-{i_*}\ar[d]^-{\Ad_*^1} & \Ho^1\hs\GGtil\ar[d]^\cong \\
\Ho^0\hs\pia\GG^\ad\ar[r]^-\sim              &\Ho^1\hs\pit\GG^\ad\ar[r]^-{i_*^\ad}          & \Ho^1\hs\GG^\ssc
}
\end{aligned}
\end{equation}
The map $\Ho^0\hs C=\Ho^0\hs\pia\GG^\ad\lra  \Ho^1\hs\GG^\ssc$
in the bottom row of the latter diagram comes from the action
of the group $\Ho^0\hs C$ on the cohomology set $\Ho^1\hs\GG^\ssc$,
and by Corollary \ref{c:Stab} the kernel of this map is $(\Ho^0\hs C)_q$\hs.
We see from the diagram \eqref{e:pia-pit-action} that the kernel of the map
$\Ho^0\hs\pia\GG\to \Ho^1\hs\GG^\ssc$ of \eqref{e:piR-pia-H1}
is the preimage in $\Ho^0\hs\pia\GG$ of $(\Ho^0\hs C)_q$
under the homomorphism $\Ad^0_*\colon \Ho^0\hs\pia\GG\to \Ho^0\hs C$,
that is, the stabilizer of $q$ under the action of $\Ho^0\hs \pia\GG$ on $\Km(\Dtil)$
described in Subsection \ref{ss:action}, as required.
\qed

\begin{proposition}\label{p:pi0R-funct}
Let $\varphi\colon \GG'\to\GG''$ be a homomorphism of connected reductive $\R$-groups.
Then the induced homomorphism $\varphi_*^{0}\colon\hs \Ho^0\hs\pia\hs\GG'\to \Ho^0\hs\pia\hs\GG''$
sends the subgroup $(\Ho^0\hs\pia\hs\GG')_1$ into $(\Ho^0\hs\pia\hs\GG'')_1$\hs,
and the following diagram commutes:
\begin{equation*}
\xymatrix@C=17truemm{
(\Ho^0\hs\pia\hs\GG')_1\, \ar[r]^{\varphi_*^{0}}\ar[d]_-{\psi'} &\, (\Ho^0\hs\pia\hs\GG'')_1\ar[d]^-{\psi''}\\
\piR \GG'\,\ar[r]^{\varphi_*}                                        &\, \piR\GG''
}
\end{equation*}
where $\psi'$ and $\psi''$ are the isomorphisms of Theorem \ref{t:pi0R}(i).
\end{proposition}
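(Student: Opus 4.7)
The plan is to leverage the functorial interpretation of $\psi$ that was set up in the proof of Proposition \ref{p:Memoir} and used in Subsection~\ref{ss:Proof-pit}: namely, $\psi$ is, up to the isomorphism $(\ii)\pia\GG\isoto\pit\GG$, the connecting map in the cohomology exact sequence associated with the topological universal covering
\[
1\to\pit\GG\to\wt\GG\labelto{\rtil}\GG\to 1,
\]
where $\wt\GG=\GG^\ssc\times\ssl$ is regarded as a covering space of the complex Lie group $G$.

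The first step is to produce a lift of $\varphi$ to this universal covering. The restriction of $\varphi$ to $\GG^{\prime\sss}$ takes values in $\GG^{\prime\prime\sss}$, and hence lifts canonically to an algebraic $\R$-homomorphism $\varphi^\ssc\colon\GG^{\prime\ssc}\to\GG^{\prime\prime\ssc}$; the complex Lie group homomorphism $\varphi\colon G'\to G''$ then lifts, by the universal property of coverings, to a unique continuous $\Gamma$-equivariant homomorphism $\tilde\varphi\colon\wt{G'}\to\wt{G''}$ with $\tilde\varphi(1)=1$. Next, I would check that $\varphi_*^{0}$ carries $(\Ho^0\hs\pia\GG')_1$ into $(\Ho^0\hs\pia\GG'')_1$. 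This is immediate from the commutative diagram
\[
\xymatrix@C=6truemm{
\Ho^0\pia\GG'\ar[r]^-{\Ad'_*}\ar[d] & \Ho^0 C'\ar[r]^-\cong\ar[d] & \Ho^1\ZZ^{\prime\ssc}\ar[r]\ar[d] & \Ho^1\GG^{\prime\ssc}\ar[d]\\
\Ho^0\pia\GG''\ar[r]^-{\Ad''_*} & \Ho^0 C''\ar[r]^-\cong & \Ho^1\ZZ^{\prime\prime\ssc}\ar[r] & \Ho^1\GG^{\prime\prime\ssc}
}
\]
induced by $\varphi$ and $\varphi^\ssc$, because the kernel of the top composition $\phi'$ is sent to the kernel of the bottom composition $\phi''$.

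For the commutativity of the main square, I would apply cohomology to the functorial exact sequence of coverings and obtain the commutative ladder
\[
\xymatrix@C=8truemm{
\wt{\GG'}(\R)\ar[r]\ar[d]^-{\tilde\varphi} & \GG'(\R)\ar[r]\ar[d]^-\varphi & \Ho^1\hs\pit\GG'\ar[r]\ar[d]^-{\tilde\varphi_*} & \Ho^1\GG^{\prime\ssc}\ar[d]^-{\varphi^\ssc_*}\\
\wt{\GG''}(\R)\ar[r] & \GG''(\R)\ar[r] & \Ho^1\hs\pit\GG''\ar[r] & \Ho^1\GG^{\prime\prime\ssc}
}
\]
Since $\wt{\GG}(\R)$ is connected and surjects onto the identity component of $\GG(\R)$, this induces, on the quotients by the images of $\wt\GG(\R)$, the commutative diagram
\[
\xymatrix@C=10truemm{
\piR\GG'\ar[r]^-\sim\ar[d]^-{\varphi_*} & \ker\!\big[\Ho^1\pit\GG'\hm\to\hm\Ho^1\GG^{\prime\ssc}\big]\ar[d]\\
\piR\GG''\ar[r]^-\sim & \ker\!\big[\Ho^1\pit\GG''\hm\to\hm\Ho^1\GG^{\prime\prime\ssc}\big]
}
\]
Combining this with the functoriality of the isomorphism $(\Ho^0\pia\GG)_1\isoto\ker[\Ho^1\pit\GG\to\Ho^1\GG^\ssc]$, which in turn follows from the naturality of the isomorphism \eqref{e:pia-pit-expl} in $\GG$, gives the required commutativity.

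The only point that needs care is the compatibility of $\tilde\varphi$ with the identification $\wt\GG=\GG^\ssc\times\ssl$ underlying the isomorphism \eqref{e:pia-pit-expl-bis}: one must verify that under $\tilde\varphi$, the restriction to $\GG^{\prime\ssc}\times\{0\}$ coincides with $\varphi^\ssc$, which is clear from uniqueness of the lift with $\tilde\varphi(1)=1$. With this checked, the explicit formula for $\psi$ in Subsection~\ref{ss:map-ker-vs} can alternatively be used to verify the commutativity directly: a representative $[\nu+\Xsc{}']$ produces $g'=\rho'(g^{\prime\ssc})\cdot t^{\prime-1}$ with $t'=\nu(-1)$, and applying $\varphi$ gives $\varphi(g')=\rho''\big(\varphi^\ssc(g^{\prime\ssc})\big)\cdot\varphi(t')^{-1}$, which is precisely the representative produced by $\psi''\big(\varphi_*^{0}[\nu+\Xsc{}']\big)$, since $\varphi_*(\nu)(-1)=\varphi(t')$ and $\varphi^\ssc(g^{\prime\ssc})$ can serve as the required lift in the construction of $\psi''$.
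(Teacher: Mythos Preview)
Your direct verification at the end, using the explicit description of $\psi$ from Subsection~\ref{ss:map-ker-vs}, is exactly the ``straightforward check'' the paper has in mind, and it is correct. Note that this computation already establishes the first assertion as well: from $t'^\ssc=(g'^\ssc)^{-1}\cdot\hm\upgam g'^\ssc$ one obtains $t''^\ssc=\varphi^\ssc(t'^\ssc)=\varphi^\ssc(g'^\ssc)^{-1}\cdot\hm\upgam\varphi^\ssc(g'^\ssc)$, whence $[t''^\ssc]=[1]$ in $\Ho^1\GG''^\ssc$, which is precisely the condition $\varphi_*^0[\nu+Q'^\vee]\in(\Ho^0\pia\GG'')_1$. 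Your cohomology-ladder argument in step~3 is likewise valid and gives an alternative route.

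Your step~2, however, contains a genuine gap: the vertical arrows $\Ho^0 C'\to\Ho^0 C''$ and $\Ho^1\ZZ'^\ssc\to\Ho^1\ZZ''^\ssc$ in that square are not defined for an arbitrary homomorphism $\varphi$, because $\varphi^\ssc$ need not carry $\ZZ'^\ssc$ into $\ZZ''^\ssc$. For instance, under the upper-left block inclusion $\SL_2\hookrightarrow\SL_3$ the element $-I_2\in Z(\SL_2)$ is sent to $\diag(-1,-1,1)\notin Z(\SL_3)$; so there is no induced map $C'\to C''$ compatible with $\Ad$ in general. This does not damage your overall proof, since step~3 (or the explicit check at the end) already yields the first claim; you should simply delete step~2.
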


\begin{proof} A straightforward check. \end{proof}

\section{Connecting maps in exact sequences}
\label{s:connecting}

\subsec{}
Let
\begin{equation*}
 1\to \GG_1\labelto{i} \GG_2\labelto{j} \GG_3\to 1
\end{equation*}
be a short exact sequence of {\em connected reductive} $\R$-groups.
By Proposition \ref{p:pi-0} this sequence gives rise to  an exact sequence
\begin{align}\label{e:G'GG''-pi-coh}
\piR\GG_1\labelto{i_*^0} \piR\GG_2\labelto{j_*^0} \piR\GG_3\labelto{\delta^0}
      \Hon\GG_1\labelto{i_*^1} \Hon\GG_2 \labelto{j_*^1} \Hon\GG_3\hs.
\end{align}
In Theorems \ref{t:pi0R} and \ref{t:H1} we computed all groups and sets in this exact sequence.
We computed  the homomorphisms $i_*^0$ and $j_*^0$ in Proposition \ref{p:pi0R-funct},
and we computed the maps  $i_*^1$ and $j_*^1$ in Proposition \ref{p:H1-funct}.
Here we compute the connecting map $\delta^0\colon  \piR\GG_3\to \Hon\GG_1$.
It turns out that $\delta_0$ factorizes via a homomorphism into $\Hon \hm Z(\GG_1)$.

\subsec{}\label{ss:T,Tssc,X,Xssc}
We have an exact commutative diagram
\[
\xymatrix@R=6mm{
1\ar[r] &\GG_1^\ssc\ar[r]\ar[d]^{\rho_1} &\GG_2^\ssc\ar[r]\ar[d]^{\rho_2} &\GG_3^\ssc\ar[r]\ar[d]^{\rho_3}\ar@/_1pc/[l]_{s^\ssc} &1 \\
1\ar[r] &\GG_1\ar[r]^i\ar[d]^{\Ad_1}     &\GG_2\ar[r]^j\ar[d]^{\Ad_2}     &\GG_3\ar[r]\ar[d]^{\Ad_3}                             &1\\
1\ar[r] &\GG_1^\ad\ar[r]                 &\GG_2^\ad\ar[r]                 &\GG_3^\ad\ar[r]\ar@/^1pc/[l]^{s^\ad}                  &1
}
\]
in which the top and the bottom rows have canonical splittings
\[s^\ssc\colon \GG_3^\ssc\to \GG_2^\ssc,\qquad s^\ad\colon \GG_3^\ad\to \GG_2^\ad,\]
and these splittings are compatible with the composite vertical arrows,
that is, the following diagram commutes:
\begin{equation}\label{e:diag-composite}
\begin{aligned}
\xymatrix@R=6mm@C=10mm{
\GG_2^\ssc\ar[d]_-{\rho_2} &\GG_3^\ssc\ar[d]^-{\rho_3}\ar@/_0.75pc/[l]_{s^\ssc}\\
\GG_2\ar[d]_-{\Ad_2}       &\GG_3\ar[d]^-{\Ad_3}\\
\GG_2^\ad                  &\GG_3^\ad\ar@/^0.75pc/[l]^{s^\ad}
}
\end{aligned}
\end{equation}
We choose a maximal torus $\TT_2\subseteq \GG_2$ (not necessarily fundamental)
and we set $\TT_1=i^{-1}(\TT_2)\subseteq \GG_1$, $\TT_3=j(\TT_2)\subseteq\GG_3$.
We obtain commutative diagrams
\[
\xymatrix@R=6mm{
1\ar[r] &\TT_1^\ssc\ar[r]\ar[d]^-{\rho_1} &\TT_2^\ssc\ar[r]\ar[d]^-{\rho_2} &\TT_3^\ssc\ar[r]\ar[d]^-{\rho_3}\ar@/_1pc/[l]_{s^\ssc} &1 \\
1\ar[r] &\TT_1\ar[r]^i\ar[d]^-{\Ad_1}     &\TT_2\ar[r]^j\ar[d]^-{\Ad_2}     &\TT_3\ar[r]\ar[d]^-{\Ad_3}                             &1 \\
1\ar[r] &\TT_1^\ad\ar[r]                  &\TT_2^\ad\ar[r]                  &\TT_3^\ad\ar[r]\ar@/^1pc/[l]^{s^\ad}                   &1
}
\]
and
\[
\xymatrix@R=6mm{
0\ar[r] &\Xsc_1\ar[r]\ar[d]^-{\rho_1}  &\Xsc_2\ar[r]\ar[d]^-{\rho_2}  &\Xsc_3\ar[r]\ar[d]^-{\rho_3}\ar@/_1pc/[l]_{s^\ssc}   &0 \\
0\ar[r] &\Xv_1\ar[r]^i\ar[d]^-{\Ad_1}  &\Xv_2\ar[r]^j\ar[d]^-{\Ad_2}  &\Xv_3\ar[r]\ar[d]^-{\Ad_3}                           &0 \\
0\ar[r] &\Xad_1\ar[r]                  &\Xad_2\ar[r]                  &\Xad_3\ar[r]\ar@/^1pc/[l]^{s^\ad}                    &0
}
\]
where $\TT_k^\ssc=\rho_k^{-1}(\TT_k)$, $\TT_k^\ad=\Ad_k(\TT_k)$ for $k=1,2,3$, and where
$\Xv_k=\X_*(\TT_k)$  and similarly for $\Xsc_k$ and $\Xad_k$.
In these diagrams,  the rows are exact, but {\em not} the columns.
Note that the top and the bottom rows of these diagrams split canonically,
which can be written for the latter diagram as follows:
\begin{gather*}
\Xsc_2=\Xsc_1\oplus \Xsc_3\quad\text{and}\quad  \Xad_2=\Xad_1\oplus \Xad_3.
\end{gather*}
These splittings are compatible with the composite vertical arrows,
that is, a diagram similar to \eqref{e:diag-composite} commutes, which can be written as follows:
\[(\Ad_2\circ\rho_2)(0,\nu_3^\ssc)=\big(\,0,\,(\Ad_3\circ\rho_3)(\nu_3^\ssc)\,\big)
      \quad\text{for}\ \,\nu_3^\ssc\in \Xsc_3.\]

\begin{construction}
We construct  a canonical coboundary homomorphism
$$\delta_Z\colon\, \H^0(\Xsc_3\to \Xv _3)\,\longrightarrow\,
\H^0(\Xv_1\to \Xad_1).$$
We start with $[ \nu^\ssc_3,  \nu_3]\in \H^0(\Xsc_3\to \Xv_3)$.
Here $( \nu^\ssc_3,  \nu_3)\in  \Zl^0(\Xsc_3\to \Xv_3)$, that is,
\begin{equation}\label{e:*1}
\nu^\ssc_3\in \Xsc_3,\quad \nu_3\in \Xv_3,\quad
\upgam \nu^\ssc_3+ \nu^\ssc_3=0,\quad  \upgam \nu_3- \nu_3=\rho_3( \nu^\ssc_3).
\end{equation}
We lift canonically $ \nu^\ssc_3$ to
$$ \nu^\ssc_2=(0, \nu^\ssc_3)\hs\in\hs \Xsc_1\oplus \Xsc_3= \Xsc_2,$$
and we lift $\nu_3$ to some $\nu_2\in \Xv_2$.
We write
$$\Ad_2(\nu_2)=( \nu^\ad_1, \nu^\ad_3)\hs\in\hs \Xad_1\oplus \Xad_3=\Xad_2,$$
where $ \nu^\ad_3=\Ad_3(\nu_3)\in \Xad_3$ and $ \nu^\ad_1\in \Xad_1$.
We set
$$\nu_1=\upgam \nu_2- \nu_2-\rho_2( \nu^\ssc_2).$$
Since by \eqref{e:*1}  we have
\[\upgam \nu_3- \nu_3=\rho_3( \nu^\ssc_3),\]
we see that  $ \nu_1\in \Xv_1$.
Straightforward calculations show that
\[\upgam \nu_1+\nu_1=0,\quad\ \Ad_1(\nu_1)=\upgam \nu^\ad_1-\nu^\ad_1.\]
We see that $(\nu_1, \nu^\ad_1)\in \Zl^0(\Xv_1\to \Xad_1)$.
We set
$$\delta_Z[ \nu^\ssc_3, \nu_3]=[ \nu_1, \nu^\ad_1]\in \H^0(\Xv_1\to \Xad_1).$$
A straightforward check shows that the map $\delta_Z$ is a well-defined homomorphism.

We have $\pia{\GG_3}=\Xv_3/\rho_3(\Xsc_3)$ with injective $\rho_3$,
whence $\H^0(\Xsc_3\to \Xv_3)\cong\Ho^0\hs\pia{\GG_3}$.
Moreover, by  Theorem \ref{t:quasi} we have a canonical isomorphism
$\H^0(\Xv_1\to \Xad_1)\cong\Ho^1 Z(\GG_1)$;
see Subsection \ref{ss:quasi} for an explicit formula for this isomorphism.
Thus we obtain a canonical homomorphism
\begin{equation*}
\delta_Z\colon \Ho^0\hs\pia{\GG_3}\to \Ho^1 Z(\GG_1)
\end{equation*}

We show that the connecting map $\delta^0 \colon \piR\GG_3\to\Hon\GG_1$
in the exact sequence \eqref{e:G'GG''-pi-coh}
factors via $\delta_Z$.
\end{construction}

\begin{theorem}\label{t:conn-pi0R-H1G}
With the above assumptions and notation, the following diagram commutes:
\begin{equation}\label{e:pi-Z-T}
\begin{aligned}
\xymatrix@C=13truemm{
(\Ho^0\hs\pia\hs\GG_3)_1\ar[r]^{\delta_Z}\ar[d]_-{\psi_3}^-\cong  & \Hon\hm Z(\GG_1)\ar[d]^-{\iota_*}\ar[r]  &\Hon\TT_1\ar[d]\\
\piR\GG_3\ar[r]^{\delta^0}                               &\Hon\GG_1\ar@{=}[r]       &\Hon\GG_1
}
\end{aligned}
\end{equation}
where the map $\iota_*$ is induced by the inclusion map $\iota\colon Z(\GG_1)\into\GG_1$.
\end{theorem}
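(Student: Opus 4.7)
The plan is to verify $\delta^0\circ\psi_3=\iota_*\circ\delta_Z$ (the right-hand triangle is immediate from functoriality of cohomology through the chain $Z(\GG_1)\subseteq\TT_1\subseteq\GG_1$) by unwinding both composites on a representative hypercocycle $(\nu_3^\ssc,\nu_3)\in\Zl^0(\Xsc_3\to\Xv_3)$ whose class lies in $(\Ho^0\hs\pia\GG_3)_1$. The key technical device is the system of compatible lifts furnished by the canonical splittings $s^\ssc\colon\GG_3^\ssc\to\GG_2^\ssc$ and $s^\ad\colon\GG_3^\ad\to\GG_2^\ad$ and the induced direct-sum decompositions $\Xsc_2=\Xsc_1\oplus\Xsc_3$ and $\Xad_2=\Xad_1\oplus\Xad_3$ from Subsection \ref{ss:T,Tssc,X,Xssc}.

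For the composite $\delta^0\circ\psi_3$: following Subsection \ref{ss:map-ker-vs} I write $\psi_3[\nu_3+\Xsc_3]=[g_3]$ with $g_3=\rho_3(g_3^\ssc)\cdot t_3^{-1}$, where $t_3=\nu_3(-1)$ and $g_3^\ssc\in G_3^\ssc$ satisfies $(g_3^\ssc)^{-1}\cdot\upgam g_3^\ssc=\nu_3^\ssc(-1)$. To apply $\delta^0$ I lift $g_3$ into $G_2$ using the splittings: put $g_2^\ssc=s^\ssc(g_3^\ssc)\in G_2^\ssc$, pick any lift $\nu_2\in\Xv_2$ of $\nu_3$, set $t_2=\nu_2(-1)$, and define $g_2=\rho_2(g_2^\ssc)\cdot t_2^{-1}$. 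Naturality of $\rho$ together with $j\circ s^\ssc=\id$ yield $j(g_2)=g_3$, so $g_2^{-1}\cdot\upgam g_2\in G_1$ is a cocycle representing $\delta^0[g_3]$. By $\Gamma$-equivariance of $s^\ssc$, $\upgam g_2^\ssc=g_2^\ssc\cdot\nu_2^\ssc(-1)$ with $\nu_2^\ssc=(0,\nu_3^\ssc)\in\Xsc_1\oplus\Xsc_3=\Xsc_2$; moreover the construction of $\delta_Z$ gives $\upgam\nu_2-\nu_2=\rho_2(\nu_2^\ssc)+\nu_1$, whence $\upgam t_2=t_2\cdot\rho_2(\nu_2^\ssc(-1))\cdot\nu_1(-1)$. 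Substituting these expressions, using commutativity of $\TT_2$ and the identity $\nu_1(-1)^2=1$, the $g_2^\ssc$-, $\rho_2(\nu_2^\ssc(-1))$- and $t_2$-factors cancel successively, leaving $g_2^{-1}\cdot\upgam g_2=\nu_1(-1)$; thus $\delta^0\circ\psi_3[\nu_3+\Xsc_3]=[\nu_1(-1)]\in\Hon\GG_1$.

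For the composite $\iota_*\circ\delta_Z$: by Theorem \ref{t:quasi} applied to $1\to Z(\GG_1)\to\TT_1\to\TT_1^\ad\to 1$ and the explicit formula of Subsection \ref{ss:quasi}, one has $\delta_Z[\nu_3^\ssc,\nu_3]=\vt^0[\nu_1,\nu_1^\ad]=[\nu_1(-1)\cdot\upgam t^{-1}\cdot t]\in\Hon Z(\GG_1)$ for any lift $t\in\TT_1$ of $\nu_1^\ad(-1)\in\TT_1^\ad$. Applying $\iota_*$ and using $t\in\TT_1\subseteq\GG_1$, commutativity of $\TT_1$ shows $t\cdot\nu_1(-1)\cdot\upgam t^{-1}=\nu_1(-1)\cdot t\cdot\upgam t^{-1}$, so this class equals $[\nu_1(-1)]$ in $\Hon\GG_1$, matching the previous computation.

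The main obstacle is the bookkeeping needed to align the two computations: one must verify that the splitting-based lift $g_2$ of $g_3$ is genuinely in $G_2$ and, crucially, that the element $\nu_1\in\Xv_1$ produced by the hypercocycle recipe for $\delta_Z$ is literally the ``defect of $\Gamma$-equivariance'' of the very same $\nu_2$ used to form $t_2$, so that no stray correction terms creep in. Once this dictionary is fixed, the rest is a short calculation in the abelian group $\TT_2$ relying only on the order-$2$ identity for values of cocharacters at $-1$.
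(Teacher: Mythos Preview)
Your argument is correct and follows essentially the same route as the paper: both lift $g_3$ to $g_2=\rho_2(g_2^\ssc)\cdot\nu_2(-1)^{-1}$ via the canonical splitting $s^\ssc$ and compute $g_2^{-1}\cdot\upgam g_2=\nu_1(-1)$, then identify this with the image of $\delta_Z$. The only cosmetic difference is that the paper packages your final step (showing $\iota_*\circ\delta_Z[\nu_3]=[\nu_1(-1)]$) by invoking Lemma~\ref{l:A-X(T)-X(T')}, whereas you unwind the explicit formula for $\vt^0$ from Subsection~\ref{ss:quasi} and cancel the $T_1$-coboundary ${}^\gamma t^{-1}\cdot t$ by hand; your sentence there is a little garbled (the correct witness is $a'=t^{-1}$, giving $t^{-1}\cdot z\cdot\upgam t=\nu_1(-1)$), but the content is the same.
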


This theorem computes the connecting map $\delta^0$.

\begin{proof}
Consider $[\nu_3]\in(\Ho^0\hs\pia\hs\GG_3)_1$ and  $\psi_3[\nu_3]\in\piR\GG_3$.
We write:
\begin{align*}
&\nu_3\in \Xv_3,\quad \upgam\nu_3-\nu_3=\rho_3(\nu_3^\ssc),\quad \nu_3^\ssc\in \Xsc_3,\quad
\nu_3^\ssc(-1)=(g_3^\ssc)^{-1}\cdot\hm\upgam g_3^\ssc,\\
&g_3=\rho_3(g_3^\ssc)\cdot\nu_3(-1)\in\GG_3(\R),\quad \psi_3[\nu_3]=[g_3]
\in \piR\GG_3.
\end{align*}
We lift canonically $\nu_3^\ssc$ to $\nu_2^\ssc\in \Xsc_2$
and lift canonically $g_3^\ssc$ to $g_2^\ssc\in G_2^\ssc$,
that is, $\nu_2^\ssc=(0,\nu_3^\ssc)$ and  $g_2^\ssc=(1,g_3^\ssc)$.
We lift $\nu_3$ to some $\nu_2\in \Xv_2$.
We set
\[g_2=\rho_2(g_2^\ssc)\cdot\nu_2(-1)\in G_2.\]
Then $g_2$ is a lift of $g_3\in\GG_3(\R)$.
We set
\[z_1=g_2^{-1}\cdot\hm\upgam g_2.\]
Then $z_1\in\Zl^1\hs\GG_1$, and
\[\delta^0\big(\psi_3[\nu_3]\big)=\delta^0[g_3]
=[z_1]\in\Hon\GG_1.\]
We calculate:
\begin{align*}
z_1=g_2^{-1}\cdot\hm\upgam g_2
&=\nu_2(-1)\cdot\rho_2(g_2^\ssc)^{-1}\cdot\hm\upgam\rho_2(g_2^\ssc)\cdot\hm\upgam\nu_2(-1)\\
&=\nu_2(-1)\cdot\rho_2(\nu_2^\ssc)(-1)\cdot\hm\upgam\nu_2(-1)=\nu_1(-1),
\end{align*}
where $\nu_1=\upgam\nu_2-\nu_2-\rho_2(\nu_2^\ssc)\in \Zl^1 \Xv_1$.
Thus
\begin{equation}\label{e:delta-psi}
\delta^0\big(\psi_3[\nu_3]\big)=[z_1]=\big[\nu_1(-1)\big]\in\Hon\GG_1.
\end{equation}

Recall that $\delta_Z[\nu_3^\ssc,\nu_3]=[\nu_1,\nu_1^\ad]\in  \H^0(\Xv_1\to \Xad_1)$.
By Lemma \ref{l:A-X(T)-X(T')} the image of $[\nu_1,\nu_1^\ad]$ under the composite map
\[  \H^0(\Xv_1\to \Xad_1)\to \Ho^1 Z(\GG_1)\to \Ho^1\hs\TT_1\]
is $\big[\nu_1(-1)\big]\in \Ho^1\hs\TT_1$. Since the right-hand rectangle in the  diagram \eqref{e:pi-Z-T} clearly commutes,
we see that
\begin{equation}\label{e:iota-delta}
\iota_*\big(\delta_Z[\nu_3]\big)=\big[\nu_1(-1)\big]\in \Hon\GG_1.
\end{equation}
By  \eqref{e:delta-psi} and \eqref{e:iota-delta},
the left-hand rectangle  in the  diagram \eqref{e:pi-Z-T} commutes,
which completes the proof of the theorem.
\end{proof}

\subsec{}
Let
\[1\to \AA_1\labelto{i}\GG_2\labelto{j}\GG_3\to 1\]
be a short exact sequence, where $\GG_2$ and $\GG_3$ are connected reductive $\R$-groups,
and $\AA_1=\ker\big[\GG_2\to\GG_3\big]$ is a central subgroup.
Then $\AA_1$ is an $\R$-quasi-torus.
By Proposition \ref{p:pi-0}, the above short exact sequence gives rise to an exact sequence
\begin{equation*}
 \piR \GG_2\labelto{j_*^0}\piR \GG_3\labelto{\delta^0}
     \Ho^1\hm\AA_1 \labelto{i_*^1} \Ho^1 \GG_2 \labelto{j_*^1} \Ho^1 \GG_3\hs.
\end{equation*}
In Theorems \ref{t:pi0R}, \ref{t:quasi}, and \ref{t:H1},
we computed all groups and sets in this  exact sequence.
We computed  the homomorphism  $j_*^0$ in Proposition \ref{p:pi0R-funct},
and we computed the map $j_*^1$ in Proposition \ref{p:H1-funct}.
The map $i_*^1$ factors via $\Ho^1 Z(\GG_2)$
and therefore can be computed using Proposition \ref{p:Z-action}.
In the rest of this section we compute the connecting homomorphism  $\delta^0$.

\subsec{}
Let $\TT_2\subset\GG_2$ be a maximal torus (not necessarily fundamental).
We set $\TT_3=j(\TT_2)\subset\GG_3$.
For $k=2,3$, we define $\TT_k^\ssc$, $\Xv_k$, and $\Xsc_k$ as in Subsection \ref{ss:T,Tssc,X,Xssc}.
Then we have a commutative diagram
\[
\xymatrix@C=15mm{
    \Xsc_2\ar[r]^-{j_*^\ssc} \ar[d]_-{\rho_2}   &\Xsc_3\ar[d]^-{\rho_3}\ar[ld]_-\upsilon  \\
    \Xv_2\ar[r]^-{\quad j_*}                    &\Xv_3
}
\]
in which $j_*^\ssc$ is an isomorphism and where $\upsilon=\rho_2\circ(j_*^\ssc)^{-1}$.
We obtain a morphism of short complexes
\[(\upsilon,\id)\hs\colon\, (\Xsc_3\to \Xv_3)\lra (\Xv_2\to \Xv_3)\]
and the induced homomorphism on hypercohomology
\[\upsilon_*\hs\colon\, \H^0 (\Xsc_3\to \Xv_3)\lra \H^0(\Xv_2\to \Xv_3).\]
We identify $\H^0(\Xsc_3\to \Xv_3)$ with $\Ho^0\pia\GG_3$ by Lemma~\ref{l:quasi-inj}.

\begin{theorem}
The following diagram is commutative:
\begin{equation}\label{e:A1-G2-G3-delta}
\begin{aligned}
\xymatrix@C=15mm@R=7mm{
\Ho^0\pia\GG_3\ar[r]^-{\upsilon_*}                                       &\H^0(\Xv_2\to \Xv_3)\ar[d]^-{\rm ev_*^0}_-\cong\\
\big(\Ho^{\strut0}\pia\GG_3\big)_1\ar@{^(->}[u]\ar[d]_-{\psi_3}^-\cong   &\H^0(\TT_2\to\TT_3)\\
\piR\GG_3\ar[r]^-{\delta^0}                                              &\Ho^1\hm\AA_1\ar[u]^-\cong_-{i_\#^0}
}
\end{aligned}
\end{equation}
in which the left-hand vertical arrow $\psi_3$ is from Theorem \ref{t:pi0R}, and
the right-hand vertical arrows are from the diagram \eqref{e:A-X(T)-X(T')}.
\end{theorem}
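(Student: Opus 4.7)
The plan is to chase an arbitrary class through the diagram and show that the two paths agree in $\H^0(\TT_2\to\TT_3)$. Pick a representative $(\nu_3^\ssc,\nu_3)\in \Zl^0(\Xsc_3\to\Xv_3)$, so $\nu_3\in\Xv_3$, $\nu_3^\ssc\in\Xsc_3$, $\upgam\nu_3^\ssc+\nu_3^\ssc=0$, and $\upgam\nu_3-\nu_3=\rho_3(\nu_3^\ssc)$. Since $\AA_1$ is central, the induced map $j_*^\ssc\colon\Xsc_2\to\Xsc_3$ is an isomorphism; set $\nu_2^\ssc=(j_*^\ssc)^{-1}(\nu_3^\ssc)\in\Xsc_2$, which also gives a $\Gamma$-equivariant identification $\GG_2^\ssc\cong\GG_3^\ssc$. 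Going along the top-right of the diagram, $\upsilon_*[\nu_3^\ssc,\nu_3]=[\rho_2(\nu_2^\ssc),\nu_3]\in\H^0(\Xv_2\to\Xv_3)$ and hence
\[
\ev_*^0\!\circ\upsilon_*[\nu_3^\ssc,\nu_3]=\bigl[\rho_2(\nu_2^\ssc)(-1),\,\nu_3(-1)\bigr]\in\H^0(\TT_2\to\TT_3).
\]

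Along the other route, assume $[\nu_3^\ssc,\nu_3]\in(\Ho^0\pia\GG_3)_1$, so by Subsection \ref{ss:map-ker-vs} applied to $\GG_3$ there exists $g_3^\ssc\in G_3^\ssc$ with $(g_3^\ssc)^{-1}\cdot\upgam g_3^\ssc=\nu_3^\ssc(-1)$, and $\psi_3[\nu_3]=[g_3]\in\piR\GG_3$ with $g_3=\rho_3(g_3^\ssc)\cdot\nu_3(-1)$. To compute $\delta^0[g_3]$, I lift via the identification: set $g_2^\ssc\in G_2^\ssc$ corresponding to $g_3^\ssc$, pick any lift $\nu_2\in\Xv_2$ of $\nu_3$, and let $g_2=\rho_2(g_2^\ssc)\cdot\nu_2(-1)\in G_2$, a preimage of $g_3$. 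Then the exact same calculation as in the proof of Theorem \ref{t:conn-pi0R-H1G} gives
\[
g_2^{-1}\cdot\upgam g_2=\nu_2(-1)\cdot\rho_2(\nu_2^\ssc)(-1)\cdot\upgam\nu_2(-1)=\nu_1(-1),
\]
where $\nu_1=\upgam\nu_2-\nu_2-\rho_2(\nu_2^\ssc)\in\Xv_1$ (noting $j_*(\nu_1)=0$, so $\nu_1(-1)\in\AA_1$). Thus $\delta^0(\psi_3[\nu_3])=[\nu_1(-1)]\in\Ho^1\AA_1$, and the explicit formula for $i_\#^0$ from Lemma \ref{l:quasi-inj}/Section \ref{s:hyper} gives $i_\#^0[\nu_1(-1)]=[\nu_1(-1),\,1]\in\H^0(\TT_2\to\TT_3)$.

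It remains to verify that $[\rho_2(\nu_2^\ssc)(-1),\,\nu_3(-1)]=[\nu_1(-1),\,1]$ in $\H^0(\TT_2\to\TT_3)$, which is a routine coboundary computation exploiting the key fact $\nu(-1)^2=\nu(1)=1$ for any cocharacter. Concretely I will show that $D^{-1}(\nu_2(-1),1)\in\Bd^0(\TT_2\to\TT_3)$ realizes the difference: in multiplicative notation,
\[
D^{-1}\bigl(\nu_2(-1),1\bigr)=\bigl(\nu_2(-1)\cdot\upgam\nu_2(-1)^{-1},\ \nu_3(-1)^{-1}\bigr),
\]
and multiplying $(\nu_1(-1),1)$ by this (and using $\rho_2(\nu_2^\ssc)(-1)^2=1$, $\nu_3(-1)^2=1$) produces exactly $(\rho_2(\nu_2^\ssc)(-1),\nu_3(-1))$. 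The main obstacle --- or rather the only genuinely delicate point --- is this last bookkeeping step, together with the need to verify that $\nu_1(-1)$ really lies in $\AA_1$ (equivalently, that $\nu_1\in\ker j_*$), both of which follow straightforwardly from the cocycle relation $\upgam\nu_3-\nu_3=\rho_3(\nu_3^\ssc)$ and the identification $j_*^\ssc(\nu_2^\ssc)=\nu_3^\ssc$. Everything else is formal naturality and the explicit formulas collected in Sections \ref{s:hyper}--\ref{s:Tori-quasi-tori}.
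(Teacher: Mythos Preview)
Your proposal is correct and follows essentially the same route as the paper's proof: both lift $\nu_3$, $\nu_3^\ssc$, and $g_3^\ssc$ canonically, compute $g_2^{-1}\cdot\upgam g_2=\nu_2(-1)\cdot\rho_2(\nu_2^\ssc)(-1)\cdot\upgam\nu_2(-1)$ (which equals your $\nu_1(-1)$ since all factors have order~2), and then reduce $[\nu_1(-1),1]$ to $[\rho_2(\nu_2^\ssc)(-1),\nu_3(-1)]$ by the coboundary of $(\nu_2(-1),1)$ in $\H^0(\TT_2\to\TT_3)$. The paper leaves that last coboundary implicit, while you spell it out; otherwise the arguments are identical.
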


This theorem computes the connecting homomorphism $\delta^0$.
See Subsection \ref{ss:quasi} for an explicit formula
for the isomorphism $\H^0(\Xv_2\to \Xv_3)\isoto \Ho^1\hm\AA_1$.

\begin{proof}
Consider $[\nu_3]\in(\Ho^0\hs\pia\hs\GG_3)_1$
and $\psi_3[\nu_3]\in\piR\GG_3$. As in the proof of Theorem~\ref{t:conn-pi0R-H1G},
we write:
\begin{align*}
&\nu_3\in \Xv_3,\quad \upgam\nu_3-\nu_3=\rho_3(\nu_3^\ssc),\quad \nu_3^\ssc\in \Xsc_3,\quad
\nu_3^\ssc(-1)=(g_3^\ssc)^{-1}\cdot\hm\upgam g_3^\ssc,\\
&g_3=\rho_3(g_3^\ssc)\cdot\nu_3(-1)\in\GG_3(\R),\quad \psi_3[\nu_3]=[g_3]
\in \piR\GG_3.
\end{align*}

We lift canonically $\nu_3^\ssc$ to $\nu_2^\ssc\in \Xsc_2$
and lift canonically $g_3^\ssc$ to $g_2^\ssc\in G_2^\ssc$.
We lift $\nu_3$ to some $\nu_2\in \Xv_2$.
Then $j_*(\nu_2)=\nu_3$ and $\rho_2(\nu_2^\ssc)=\upsilon(\nu_3^\ssc)$.
We set
\[g_2=\rho_2(g_2^\ssc)\cdot\nu_2(-1)\in \GG_2(\C).\]
Then $g_2$ is a lift of $g_3\in\GG_3(\R)$.
We set
\[a_1=g_2^{-1}\cdot\hm\upgam g_2.\]
Then $a_1\in\Zl^1\hm\AA_1$, and
\[\delta^0\big(\psi_3[\nu_3]\big)=\delta^0[g_3]
=[a_1]\in\Hon\GG_1.\]
We have
\begin{align*}
a_1=g_2^{-1}\cdot\hm\upgam g_2
&=\nu_2(-1)\cdot\rho_2(g_2^\ssc)^{-1}\cdot\hm\upgam\rho_2(g_2^\ssc)\cdot\hm\upgam\nu_2(-1)\\
&=\nu_2(-1)\cdot\rho_2(\nu_2^\ssc)(-1)\cdot\hm\upgam\nu_2(-1).
\end{align*}
Thus
\begin{equation*}
\delta^0\big(\psi_3[\nu_3]\big)=[a_1]=\big[\nu_2(-1)\cdot\upgam\nu_2(-1)
       \cdot\rho_2(\nu_2^\ssc)(-1)\big]\in\Hon\!\AA_1
\end{equation*}
and
\begin{align*}
(i_\#^{0}\circ\delta^0\circ\psi_3)[\nu_3]
&=\big[\nu_2(-1)\cdot\upgam\nu_2(-1)\cdot\rho_2(\nu_2^\ssc)(-1),\hs 1\big]\\
&=\big[\rho_2(\nu_2^\ssc)(-1),\, j_*(\nu_2)(-1)\big]\hs\in\H^0(\TT_2\to\TT_3).
\end{align*}

On the other hand, we have
\begin{align*}
&\upsilon_*[\nu_3]=[\upsilon(\nu_3^\ssc),\hs \nu_3]\hs\in \H^0(\Xv_2\to \Xv_3),\\
&(\ev_*^0\circ\upsilon_*)[\nu_3]=\big[\upsilon(\nu_3^\ssc)(-1),\hs \nu_3(-1)\big]\in\hs
 \H^0(\TT_2\to\TT_3).
\end{align*}
Since $\rho_2(\nu_2^\ssc)=\upsilon(\nu_3^\ssc)$ and $j(\nu_2)=\nu_3$, we conclude
that the diagram \eqref{e:A1-G2-G3-delta} indeed commutes, as required.
\end{proof}

\section{Examples}
\label{s:examples}

\subsec{}\label{ss:s-q-c-q}
For  {\em even} $l>4$, consider the real spin group
$\GG_q^\ssc=\GG(\DDD_\ell\hs,0,\id,q)$ of type $\DDD_\ell$
with the notation of \cite[Section 12.12]{BT},
where $q\in \Km(\Dtil)$ is a Kac labeling of
the affine Dynkin diagram $\Dtil=\DDD_\ell^{(1)}$
 with the notation of \cite[Table 6]{OV}.
This group comes with a {\em compact} maximal torus $\TT^\ssc$
and a system of simple roots $\Sm=\{\alpha_1,\dots,\alpha_\ell\}$
with the notation of \cite[Table 1]{OV}.
We consider the fundamental coweight
\[ \omega_{\ell-1}^\vee=(\vev_1+\dots+\vev_{\ell-1}-\vev_\ell)/2\,\in P^\vee\subset\tl,\]
where $\vev_1,\dots,\vev_{\ell}$ are the cocharacters dual
to the weights $\ve_1,\dots,\ve_{\ell}$ of the vector representation, cf.\ \cite[Table 1]{OV}.
Set
\[a=\exp 2\pi\ii\hs \omega^\vee_{\ell-1}\in \TT^\ssc(\R)\subset\GG_q^\ssc(\R).\]
Since $\ell$ is even, we have
$2\omega^\vee_{\ell-1}\in Q^\vee=\X_*(T^\ssc)$, and hence  $a^2=1$.

We consider the one-dimensional  split $\R$-torus $\TT^1_s$
and the one-dimensional compact $\R$-torus $\TT^1_c$;
see Corollary \ref{c:indecoposable-tori}.
Consider the elements of order 2
\[-1\in \TT^1_s(\R)\quad\text{and}\quad -1\in \TT^1_c(\R).\]
We consider the reductive $\R$-groups
\[\GG_{s,q}=(\GG^\ssc_q\times\TT^1_s)\hs/\hs\{1,(a,-1)\}\quad \text{and}\quad
\GG_{c,q}=(\GG^\ssc_q\times\TT^1_c)\hs/\hs\{1,(a,-1)\}.\]

\subsec{}
Let $\GG$ be either $\GG_{s,q}$ or $\GG_{c,q}$.
We wish to compute  $\Ho^1\hs\GG$ and $\piR \GG$.
We use Notation~\ref{ss:Notation-reductive}.
Recall that $\SS=Z(\GG)^0$; then $\SS$ is either $\TT^1_s$ or $\TT^1_c$.
Recall that $\GG^\sss=[\GG,\GG]$; then $\GG^\sss=\GG_q^\ssc$.
We have $\GG=\GG^\sss\cdot\SS=\GG^\ssc\cdot \SS$.
We set
\[\TT=\TT^\sss\cdot\SS=\TT^\ssc\cdot\SS=(\TT^\ssc\times\SS)\hs/\hs\{1,(a,-1)\}.\]
Then $\TT$ is a fundamental torus of $\GG$.
We have
\[\X_*(\TT)=\left\langle Q^\vee\oplus\X_*(\SS),\omega^\vee_{\ell-1}+\half\vev\right\rangle,\]
where $\X_*(\SS)=\langle\vev\rangle\simeq\Z$.

We freely use the notation of \cite[Section 16]{BT}.

With the notation of Section \ref{s:H1}, we have  $G^\der=\Spin_{2\ell}(\C)$, $S=\C^\times$.
Let $\ve\in\X^*(S)$ be the basis character of the one-dimensional torus $S$ dual to $\vev$.
Then we have:
\begin{gather*}
\Lambda      = \langle\ve\rangle, \qquad
      X      = \left\langle\ve_i\pm\ve\ (i=1,\dots,\ell),\ \omega_{\ell-1}\pm\tfrac\ell2\ve\right\rangle, \qquad
      M      = \langle2\ve\rangle,      \\
      X^\vee = \bigg\langle\frac{\pm\vev_1\pm\dots\pm\vev_{\ell}\pm\vev}2\ \bigg|\
      \text{with odd number of minuses among  $\pm\vev_i$}\bigg\rangle, \\
\Lambda^\vee = \langle\vev\rangle, \qquad
      M^\vee = \langle\half\vev\rangle.
\end{gather*}

\subsec{}\label{ss:H1-Gsq}
We compute $\Ho^1\hs\GG_{s,q}$.
In this case  $\SS=\TT_s^1$ is a split torus.
The automorphism $\tau$ acts on the weights and coweights as follows:
\[\ve\mapsto-\ve,\ \ \vev\mapsto-\vev,\  \ \ve_i\mapsto\ve_i\hs,\ \  \vev_i\mapsto\vev_i.\]
Therefore, $T_0$ is a maximal torus in $G^\sss$ and
\begin{gather*}
\Lambda_0 = M_0 = 0, \qquad \Lambda_0^\vee = M_0^\vee = 0, \qquad X_0 = P, \\
\wt X^\vee_0 =
\left\langle\frac{\pm\vev_1\pm\dots\pm\vev_{\ell}}2\biggm|
   \text{with odd number of minuses among  $\pm\vev_i$}\right\rangle.
\end{gather*}
Hence  in Theorem \ref{t:H1} we have $\mm=0$  and the cohomology classes
correspond to Kac labelings $p\in\Km(\Dtil)$.

The lattice $X_0$ is generated by the root lattice $Q$ and the weights
\begin{align*}
\frac{\ve_1-\ve_2+\ve_3-\ve_4+\dots+\ve_{\ell-3}-\ve_{\ell-2}+\ve_{\ell-1}+\ve_{\ell}}2
   \, &=\, \frac{\alpha_1+\alpha_3+\dots+\alpha_{\ell-3}+\alpha_{\ell}}2\\
\text{ and }\, \ve_{\ell-1}\, &=\, \frac{\alpha_{\ell-1}+\alpha_{\ell}}2\hs.
\end{align*}

For $p\in\Km(\Dtil)$, we set:
\begin{align*}
r(p)&=p_1+p_3+\dots+p_{\ell-3}+p_\ell\pmod{2},\\
r'(p)&=p_{\ell-1}+p_\ell\hs.
\end{align*}
The congruences \eqref{e:pr=q} are equivalent to
$r(p)\equiv r(q)$, $r'(p)\equiv r'(q)\pmod2$.

The group $F_0$ is generated by the class  $[\omega^\vee_{\ell-1}]$.
It acts on $\Km(\Dtil)$ by the reflection
with respect to the vertical symmetry axis of $\Dtil$:
\[
\dynkin[%
edge length=0.75cm,
labels={0,1,,,,,\ell-1,\ell},
labels*={,,2,3,,\ell-2,,},
involution/.style={latex-latex,densely dashed},
involutions={*06;17}]
D[1]{}
\]
We denote by $[p]$ the $F_0$-orbit of $p\in \Km(\Dtil)$.
Then $r(p)$ and $r'(p)$ depend only on $[p]$\hs.

By Theorem \ref{t:H1}, the set $\Ho^1\hs\GG_{s,q}$
is in a canonical bijection with the set of orbits
\[\Orbb{r(q),\hs r'(q)}\coloneqq\big\{\hs[p]\ \,\big|\ \,  p\in\Km(\Dtil),
     \, r(p)\equiv r(q),\, r'(p)\equiv r'(q)\!\!\!\pmod2\hs\big\}.\]
These four sets $\Orb{r,r'}$ (described by representatives of orbits)  are:
\begin{align*}
&\Orb{0,0}:\
&\gf{2}{0}0 \cdots 0\gf{0}{0}, \quad
\gf{0}{2}0 \cdots 0\gf{0}{0}, \quad
\text{and} \quad \
\gf{0}{0}0 \cdots 1 \cdots 0\gf{0}{0}  &\quad
\text{with $1$ \,at $i=2j$},
\\
&\Orb{0,1}:\quad
&\gf{1}{0}0 \cdots 0\gf{1}{0}, \quad
\gf{0}{1}0 \cdots 0\gf{0}{1},\,&
\\
&\Orb{1,0}:\quad
&\gf{1}{1}0 \cdots 0 \gf{0}{0}, \quad
\text{and} \quad
\gf{0}{0}0 \cdots 1 \cdots 0\gf{0}{0} &\quad
\text{with $1$ \,at $i=2j+1$},
\\
&\Orb{1,1}:\quad
&\gf{1}{0}0 \cdots 0 \gf{0}{1}.\,&
\end{align*}
for each integer $i$ (even or odd, respectively) with $1<i\le \ell/2$.
We have
\begin{equation*}
\#\Orb{0,0}=\lfloor \ell/4\rfloor+2, \quad
\#\Orb{0,1}=2,\quad
\#\Orb{1,0}=\lceil \ell/4\rceil,\quad
\#\Orb{1,1}=1.
\end{equation*}
We see that
\[\#\Ho^1\hs\GG_{s,q}\hs=\hs\#\Orbb{r(q),\hs r'(q)}\hs.\]
Thus we know the cardinalities  of $\Ho^1\hs\GG_{s,q}$ for all $q\in\Km(\Dtil)$:

\noindent
If $r'(q)=0$, then
\[\#\Ho^1\hs\GG_{s,q}=
\begin{cases}
\,\lfloor \ell/4\rfloor+2,   & r(q)=0, \\
\,\lceil \ell/4\rceil,     & r(q)=1.
\end{cases}
\]
If $r'(q)=1$, then
\[\#\Ho^1\hs\GG_{s,q}=
\begin{cases}
\,2,     & r(q)=0, \\
\,1,     & r(q)=1,
\end{cases}
\bigg\}=2-r(q).
\]

\subsec{}
We compute $\Ho^1\hs\GG_{c,q}$.
In this case  $\SS=\TT_c^1$ is a compact torus.
The difference with the previous example
is that now $\tau=\id$
and $T_0=T$. It follows that
\begin{gather*}
\Lambda_0=\Lambda,\qquad M_0=M,\qquad X_0=X,\qquad\text{and} \\
\Lambda_0^\vee=\wt\Lambda_0^\vee=\Lambda^\vee,\qquad M_0^\vee=
   \wt{M}_0^\vee=M^\vee,\qquad X_0^\vee=\wt{X}_0^\vee=X^\vee.
\end{gather*}
In particular,
$M_0^\vee/2\wt\Lambda_0^\vee=\langle\half\vev\rangle/
    \langle2\vev\rangle\simeq\Z/4\Z=\{0,1,2,3\}$.
For $m\in M^\vee$, we denote its class in $\Z/4\Z$ by $[m]$,
that is, $[m]=k\bmod4$ if $m=\frac{k}2\vev$.

The lattice $X$ is generated by the root lattice $Q$ and the weights
\begin{align*}
\frac{\ve_1-\ve_2+\ve_3-\ve_4+\dots+\ve_{\ell-3}-\ve_{\ell-2}+\ve_{\ell-1}+\ve_{\ell}}2 \,&=\,
           \frac{\alpha_1+\alpha_3+\dots+\alpha_{\ell-3}+\alpha_{\ell}}2\\
\text{ and }\, \ve_{\ell-1}+\ve \,&=\, \frac{\alpha_{\ell-1}+\alpha_{\ell}}2+\ve.
\end{align*}
For $m\in M^\vee$, we set
$$
r''(m)=2\langle\ve,m\rangle\bmod2=
\begin{cases}
0, & [m]=0\text{ or }2, \\
1, & [m]=1\text{ or }3.
\end{cases}
$$
The congruences \eqref{e:pr=q} are equivalent to
$r(p)\equiv r(q)$, $r'(p)+r''(m)\equiv r'(q)\pmod2$.

The group $F_0$ is generated by the class  $[\omega^\vee_{\ell-1}+\half\vev]$.
It acts on $\Km(\Dtil)$ by the reflection with respect to the vertical symmetry axis of $\Dtil$
and on $M_0^\vee/2\wt\Lambda_0^\vee\simeq\Z/4\Z$ as $0\leftrightarrow2$, $1\leftrightarrow3$.
Note that $r(p)$, $r'(p)$, and $r''(m)$ depend only
on the $F_0$-orbit of $\big(p,[m]\big)\in\Km(\Dtil)\times M_0^\vee/2\wt\Lambda_0^\vee$.

Let $\Orb{r,r',r''}$ denote the set of $F_0$-orbits of $\big(p,[m]\big)$
such that
\[r(p)\equiv r,\quad\, r'(p)\equiv r',\quad\,  r''(m)\equiv r''\!\!\!\!\pmod2.\]
The representatives of the orbits in $\Orb{r,r',r''}$ are $\big(p,[m]\big)$,
where $p$ are the representatives of the orbits in $\Orb{r,r'}$ and
$$[m]=
\begin{cases}
r'',                 &\text{if $p$ is fixed by }F_0, \\
r''\text{ or }r''+2, &  \text{otherwise}.
\end{cases}$$
The cardinalities of these eight orbit sets are:
\begin{align*}
\#\Orb{0,0,0}\hs=\hs\#\Orb{0,0,1}&=
\begin{cases}
2\lfloor \ell/4\rfloor+3, & \ell/2\text{ even}, \\
2\lfloor \ell/4\rfloor+4, & \ell/2\text{ odd},
\end{cases}&\!
\#\Orb{0,1,0}\hs=\hs\#\Orb{0,1,1}&=2,\\
\#\Orb{1,0,0}\hs=\hs\#\Orb{1,0,1}&=
\begin{cases}
2\lceil \ell/4\rceil,   & \ell/2\text{ even}, \\
2\lceil \ell/4\rceil-1, & \ell/2\text{ odd},
\end{cases}&\!
\#\Orb{1,1,0}\hs=\hs\#\Orb{1,1,1}&=2.
\end{align*}
By Theorem~\ref{t:H1}, the set $\Ho^1\hs\GG_{c,q}$ is in a canonical bijection
with the union of two orbit sets $\Orbb{\hs r(q),\hs r'(q),\hs0}\cup\Orbb{\hs r(q),\hs r'(q)-1,\hs1}$.
We obtain
\[\#\Ho^1\hs\GG_{c,q}\hs=\hs\#\Orbb{\hs r(q),\hs r'(q),\hs0}\hs+\hs\#\Orbb{\hs r(q),\hs r'(q)-1,\hs1}\hs.\]
Thus if $r(q)=0$, then
$$
\#\Ho^1\hs\GG_{c,q}\hs=
\begin{cases}
\,2\lfloor \ell/4\rfloor+5, & \ell/2\text{ even} \\
\,2\lfloor \ell/4\rfloor+6, & \ell/2\text{ odd}
\end{cases}
\bigg\}=\ell/2+5.
$$
If $r(q)=1$, then
$$
\#\Ho^1\hs\GG_{c,q}\hs=
\begin{cases}
\,2\lceil \ell/4\rceil+2,   & \ell/2\text{ even} \\
\,2\lceil \ell/4\rceil+1, & \ell/2\text{ odd}
\end{cases}
\bigg\}=\ell/2+2.
$$

\subsec{}\label{ss:-present-vs-old}
In \cite{Borovoi88} (see also \cite{Borovoi-arXiv}),
for any connected reductive $\R$-group $\GG$,
the first-named author constructed a bijection between
$\Ho^1\hs\GG$ and the set of orbits of a certain action of $W_0$ on $\Ho^1\hs\TT$,
where $\TT$ is a fundamental torus of $\GG$,
and $W_0$ is a certain subgroup of the Weyl group $W=W(G,T)$; see \cite[Section 4]{BT}.
We compare our present formula for $\Ho^1\hs\GG$ with the old formula of \cite{Borovoi88}
in the cases $\GG= \GG_{s,q}$ and $\GG= \GG_{c,q}$ as defined in Subsection~\ref{ss:s-q-c-q}.
In both cases we have $W_0=W$, and so $W_0$ is
a Coxeter group of type $\DDD_\ell$ and of order $\ell!\hs2^{\ell-1}$.
Moreover, in both cases the group $F_0$ is generated by an element acting on $\Dtil$
by the reflection with respect to the vertical symmetry axis.

For $\GG=\GG_{s,q}$ we obtain the set $\Ho^1\hs \GG$ of cardinality $\le \ell/4+2$
by computing orbits of a group  of order 2.
On the other hand, with the old formula we obtain
$\#\Ho^1\hs \GG$ as the set of orbits of the group $W_0$ of order $\ell!2^{\ell-1}$
in the set $\Ho^1\hs\TT$ of cardinality $2^{\ell-1}$.

Similarly, for $\GG=\GG_{c,q}$ we obtain the set $\Ho^1\hs \GG$ of cardinality $\ell/2+2$ or $\ell/2+5$
by computing orbits of a group of order 2.
On the other hand, with the old formula we obtain
$\#\Ho^1\hs \GG$ as the set of orbits of the group $W_0$ of order $\ell!\hs2^{\ell-1}$
in the set $\Ho^1\hs\TT$ of cardinality $2^{\ell+1}$.

We conclude that our present method requires less calculations than the old one.

\subsec{}
We compute $\piR \GG$, where $\GG=\GG_{c,q}$.
We have $\pia\GG\simeq\Z$, where $\tau$ acts on  $\pia\GG$ trivially, and $\gamma$ acts as $-1$.
We see that $\Ho^0\pia\GG=0$, and by Theorem \ref{t:pi0R} we have
\[ \piR\GG_{c,q}\cong (\Ho^0\pia\GG)_1=0.\]

\subsec{}
We compute $\piR \GG$, where $\GG=\GG_{s,q}$.
We have $\pia\GG\simeq\Z$, where $\tau$ acts on  $\pia\GG$ as $-1$, and $\gamma$ acts trivially.
We see that $\Ho^0\pia\GG\simeq \Z/2\Z$.
We wish to compute  $(\Ho^0\pia\GG)_1$.

The group $\Ho^0\pia\GG$ acts on $\Km(\Dtil)$ via the homomorphism
$$
\Ad_*\colon\, \Ho^0\pia\GG \longrightarrow \Ho^0C = C = C_0 \simeq \Z/2\Z\oplus\Z/2\Z,
$$
which sends the generator $[\omega^\vee_{\ell-1}+\half\vev]$ of $\Ho^0\pia\GG$ to $[\omega^\vee_{\ell-1}]$.
As already noted in Subsection~\ref{ss:H1-Gsq},
$[\omega_{l-1}^\vee]$ acts on $\Km(\Dtil)$
by reflecting the Dynkin diagram $\Dtil$ with respect to the vertical symmetry axis.

By Theorem~\ref{t:pi0R}, $\piR\GG_{s,q}=(\Ho^0\pia\GG)_1$ is the stabilizer of $q$
under the action of $\Ho^0\pia\GG$ on $\Km(\Dtil)$.
It follows that $\piR\GG_{s,q}$ is nontrivial
if and only if $q$ is fixed by the aforementioned reflection.
This condition is satisfied if and only if
$$
q\, =\quad \gf{0}{0}0 \cdots 1 \cdots 0\gf{0}{0}
\text{\; with $1$ at $\ell/2$ \quad or \quad}
\gf{1}{0}0 \cdots 0\gf{1}{0},
$$
up to the action of $C$, that is,
$\GG^\sss=\GG^\ssc$ is isomorphic to either $\Spin_{\ell,\ell}$ or $\Spin^*_{2\ell}$\,;
see \cite[Table 7]{OV}.
In this case $\piR\GG_{s,q}\simeq\Z/2\Z$.

\appendix

\section{Classification of $\Gamma$-lattices}
\label{s:Indecomposable}

For the reader's convenience, we provide a short elementary proof
of the following known result.

\begin{theorem}[\hs{Curtis and Reiner \cite[Theorem (74.3)]{CR}}\hs]
\label{t:indec}
Let $L$ be a lattice (a finitely generated free abelian group),
and $\tau\colon L\to L$ be an involutive automorphism, that is, $\tau^2=\id$.
Then there exists a basis of $L$ consisting of vectors
$e_i,\ f_j,\ g_k,\ h_k,$ on which $\tau$ acts as follows:
$\tau(e_i) = e_i$, $\tau(f_j) = -f_j$, $\tau(g_k) = h_k$, $\tau(h_k) = g_k$.
\end{theorem}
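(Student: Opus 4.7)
The plan is to induct on $n \coloneqq \operatorname{rank} L$: the case $n = 0$ is empty, and for $n = 1$ the relation $\tau^2 = \id$ forces $\tau = \pm\id$, producing a single basis vector of type $e$ or $f$. For the inductive step I will split off one indecomposable $\tau$-stable direct summand of $L$---of rank $1$ of fixed or sign type, or of rank $2$ of swap type---and apply the inductive hypothesis to its $\Gamma$-stable complement.

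Let $V^\pm \subseteq L \otimes_\Z \Q$ denote the $\pm 1$-eigenspaces of $\tau$, so that $L_\Q = V^+ \oplus V^-$, and set $L^\pm \coloneqq L \cap V^\pm$, the saturated $\tau$-fixed and $\tau$-antifixed sublattices of $L$. Write $\pi^\pm(x) = \tfrac12(x \pm \tau x)$ for the associated rational projections. The argument now divides according to whether $\pi^+(L) = L^+$. If equality holds, then for every $x \in L$ both $\pi^+(x) \in L^+$ and $\pi^-(x) = x - \pi^+(x) \in L^-$ lie in $L$, whence $L = L^+ \oplus L^-$; any $\Z$-bases of $L^+$ and $L^-$ then furnish the desired $e$- and $f$-type vectors, completing the proof in this case.

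Otherwise, pick $y_0 \in L$ with $\pi^+(y_0) \notin L^+$. Writing $\pi^+(y_0) = (k_1/2)\hs e$ with $e \in L^+$ primitive, the hypothesis forces $k_1$ to be odd, and subtracting a suitable integer multiple of $e$ from $y_0$ reduces to $k_1 = 1$. The analogous normalisation on the $V^-$-side produces $y \in L$ with $\pi^+(y) = e/2$ and $\pi^-(y) = f/2$ for primitive vectors $e \in L^+$ and $f \in L^-$. Setting $g \coloneqq y$ and $h \coloneqq \tau y$, these span a $\tau$-stable rank-$2$ swap sublattice $M \coloneqq \Z g + \Z h$ of $L$, and a short direct check (exploiting that $(e + f)/2 = y \in L$ while $e/2$ and $f/2$ do not lie in $L$) shows that $M$ is saturated in $L$.

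Saturation of $M$ supplies a $\Z$-linear functional $\psi \colon L \to \Z$ with $\psi(g) = 1$ and $\psi(h) = 0$, and the formula $\phi(x) \coloneqq \psi(x) + \psi(\tau x)\cdot\tau$ defines a $\Gamma$-equivariant homomorphism $\phi \colon L \to \Z[\Gamma] \cong M$ (under the identification $1 \leftrightarrow g$, $\tau \leftrightarrow h$) whose restriction to $M$ is the identity. Hence $L = M \oplus \ker\phi$ as $\Gamma$-modules, and applying the inductive hypothesis to the $\tau$-stable rank-$(n-2)$ lattice $\ker\phi$ supplies the remaining basis vectors; together with the swap pair $(g, h)$ this produces a basis of $L$ of the required form. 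The main obstacle lies in the arithmetic of the swap case: one must normalise $y$ precisely enough that $\Z g + \Z h$ is genuinely saturated in $L$, after which the existence of the $\Gamma$-equivariant retraction $\phi$ is a formal consequence.
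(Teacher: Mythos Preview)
Your proof is correct, but it proceeds quite differently from the paper's. The paper also inducts on the rank, but rather than splitting off a direct summand it passes to a \emph{quotient}: it finds a primitive $\tau$-fixed vector $e$ (unless $\tau=-\id$), applies the inductive hypothesis to $L/\langle e\rangle$, lifts the resulting basis back to $L$, and then adjusts the lifts by integer multiples of $e$ so that $\tau$ acts in the prescribed way; the one delicate case, where some $f_j$ satisfies $\tau(f_j)=-f_j+e$, is handled by replacing the pair $(e,f_j)$ with the swap pair $(f_j,\,-f_j+e)$. Your argument instead works from below: when $L\ne L^+\oplus L^-$ you explicitly construct a saturated swap sublattice $M\cong\Z[\Gamma]$ and then produce a $\Gamma$-equivariant retraction $L\to M$ via the averaging formula $\phi(x)=\psi(x)+\psi(\tau x)\cdot\tau$, yielding a $\Gamma$-stable complement. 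This is a bit more structural---it identifies the reason the induction works as the relative projectivity of $\Z[\Gamma]$---whereas the paper's version is a direct basis manipulation. Both are short; the paper's avoids the saturation check, while yours dispenses with the $L=L^+\oplus L^-$ case in one stroke and makes the module-theoretic content visible.
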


\begin{proof}
We prove the theorem by induction on the rank $r$ of $L$. The case $r=0$ is trivial.

Induction step: assume that $r\ge 1$.
If $\tau=-\id$, there is nothing to prove.
Otherwise there is a nonzero $\tau$-fixed vector $e$,
which we can choose to be primitive (indivisible).
Consider the quotient group $L/\langle e\rangle$.
Since $e$ is primitive, $L/\langle e\rangle$ is a lattice
(free abelian group) of rank $r-1$.
Clearly, $\tau$ acts on  $L/\langle e\rangle$ as an involution.
By the induction hypothesis, the quotient lattice $L/\langle e\rangle$
has a basis $[e_i],\ [f_j],\ [g_k],\ [h_k]$ with required properties,
where $[v]$ denotes the coset of a vector $v$ in $L$.
We consider the action of $\tau$ on the basis
$e,\ e_i,\ f_j,\ g_k,\ h_k$ of $L$.

Firstly, $\tau(e_i) = e_i$. Otherwise it would be
$\tau(e_i) = e_i+me$ with some  nonzero $m$,
but then $\tau^2(e_i) = e_i+2me\neq e_i$,
which  contradicts the assumption $\tau^2=\id$.

Secondly, $\tau(g_k) = h_k+me$ for some integer $m$ (depending on $k$).
After replacing  $h_k$ with $h_k+me$, we have $\tau(g_k) = h_k$\hs, and hence  $\tau(h_k) = g_k$
(because $\tau$ is involutive).

Finally, $\tau(f_j) = -f_j+me$ for some integer $m$ (depending on $j$).
If $m = 2n$ is even, then  $\tau(f_j-ne) = -f_j+ne$, and after replacing
$f_j$ with $f_j-ne$ we obtain $\tau(f_j) = -f_j$.
If $m = 2n+1$ is odd, the same replacement gives $\tau(f_j) = -f_j+e$.

If this latter case $\tau(f_j) = -f_j+e$ does not appear, the proof is complete.
Otherwise let us fix some $j$, say, $j=0$, such that $\tau(f_0)= -f_0+e$,
and consider all other $j$ for which $\tau(f_j) = -f_j+e$.
After replacing $f_j$ with $f_j-f_0$ for all these other $j$,
we obtain  $\tau(f_j) = -f_j$.
In other words, we may assume that $\tau(f_j) = -f_j+e$ holds for only one $j$.

Now we replace $e$ with $-f_j+e$ and obtain two basis vectors
$g = f_j$ and  $h= -f_j+e$, for which $\tau(g) = h$ and $\tau(h) = g$.
Thus we obtain a basis of $L$ with required properties,
which completes the proof of the theorem.
\end{proof}

See Casselman \cite{Casselman} for another elementary proof of Theorem \ref{t:indec}.


\end{document}